\numberwithin{equation}{section}
\theoremstyle{plain}
\newtheorem{thm}{Theorem}[section]
\newtheorem{lemma}{Lemma}[section]
\newtheorem{corollary}{Corollary}[section]
\theoremstyle{definition}
\theoremstyle{remark}
\newtheorem{remark}{Remark}[section]
\newtheorem*{assumptionpi}{Assumptions on $\pi$}
\newtheorem*{assumptionf}{Assumptions on $f$}
\def\citeapos#1{\citeauthor{#1}'s (\citeyear{#1})}
\def\T{{ \mathrm{\scriptscriptstyle T} }}
\newcommand{\rd}{\mathrm{d}}
\newcommand{\oldiff}{\overline{\mathrm{diff}B}}
\newcommand{\ndiff}{\mathrm{diff}B}
\DeclareMathOperator*{\argmin}{arg\,min}
\newcommand{\dps}{\displaystyle}
\newcommand{\AKA}[1]{\textcolor{black}{#1}}
\begin{document}

\begin{frontmatter}
 \title{Admissible Bayes equivariant estimation of location vectors for spherically symmetric
distributions with unknown scale}
\runtitle{Admissible Bayes equivariant estimation}

\begin{aug}
\author{\fnms{Yuzo} \snm{Maruyama}\thanksref{t1,m1}
\ead[label=e1]
{maruyama@csis.u-tokyo.ac.jp}}
 \and
\author{\fnms{William, E.} \snm{Strawderman}
\thanksref{t2,m2}
\ead[label=e2]{straw@stat.rutgers.edu}}

\thankstext{t1}{This work was partially supported by KAKENHI \#25330035, \#16K00040.}
\thankstext{t2}{This work was partially supported by grants from the Simons Foundation (\#209035 and \#418098 to William Strawderman).}
\address{University of Tokyo\thanksmark{m1} and Rutgers University\thanksmark{m2} \\
\printead{e1,e2}}
\runauthor{Y. Maruyama and W. Strawderman}

\end{aug}

\begin{abstract}
 This paper investigates estimation of the mean vector under invariant quadratic loss 
 for a spherically symmetric location family with a residual vector with density
 of the form $ f(x,u)=\eta^{(p+n)/2}f(\eta\{\|x-\theta\|^2+\|u\|^2\})$\AKA{, where $\eta$ is unknown.
We show that the natural estimator $x$ is admissible for $p=1,2$. Also, for $p\geq 3$,}
we find classes of generalized Bayes estimators that are admissible
 within the class of equivariant estimators of the form
 $\{1-\xi(x/\|u\|)\}x$.
 In the Gaussian case,
 a variant of the James--Stein estimator, $[1-\{(p-2)/(n+2)\}/\{\|x\|^2/\|u\|^2+(p-2)/(n+2)+1\}]x$,
which dominates the natural estimator $x$, is also admissible within this class.
 We also study the related regression model.
 \end{abstract}

\begin{keyword}[class=AMS]
\kwd[Primary ]{62C15}  
\kwd[; secondary ]{62J07}
\end{keyword}

\begin{keyword}
\kwd{admissibility}
\kwd{Stein's phenomenon}
\kwd{generalized Bayes}
\kwd{Bayes equivariance}
\end{keyword}
\end{frontmatter}
\section{Introduction}
\label{sec:intro}
Let
\begin{equation}\label{eq:density.f}
 (X,U)\sim \eta^{(p+n)/2}f(\eta\{\|x-\theta\|^2+\|u\|^2\})
\end{equation}
where $X\in\mathbb{R}^p$ and $U\in\mathbb{R}^n$ and where $\theta\in\mathbb{R}^p$ and $\eta\in\mathbb{R}_+$ are unknown.
We \AKA{mainly} assume
\begin{equation}\label{assumption.p.n}
\AKA{p\geq 3 \text{ and }n\geq 2},
\end{equation}
and consider the problem of estimating $\theta$ under scaled quadratic loss
\begin{equation}\label{eq:loss}
 L(\delta,\theta,\eta)=\eta\|\delta - \theta\|^2.
\end{equation}
In particular, we are interested in the admissibility
among the class of equivariant estimators of the form
\begin{equation}\label{eq:equiv.est.2.intro}
 \delta_{\xi}(X,U)=\left\{1-\xi(X/\|U\|)\right\}X, \text{ where }\xi:\mathbb{R}^p\to\mathbb{R}.
\end{equation}
We assume that $f(\cdot)\geq 0$ 
is defined so that each coordinate has variance $1/\eta$.
In particular, this implies that $f(\cdot)$ in \eqref{eq:density.f}, 
satisfies
\begin{equation}\label{assmp.f.1}
 \int_{\mathbb{R}^{p+n}}f(\|v\|^2) \rd v=1, \ \int_{\mathbb{R}^{p+n}}v_i^2f(\|v\|^2) \rd v=1, 
\end{equation}
for $v=(v_1,\dots,v_{p+n})^\T\in\mathbb{R}^{p+n}$.
Needless to say, this is a generalization of the Gaussian case where
\begin{equation*}
 f_G(t)=\frac{1}{(2\pi)^{(p+n)/2}}\exp(-t/2)
\end{equation*}
and hence $X\sim N_p(\theta,\eta^{-1}I)$ and $\|U\|^2\sim\eta^{-1}\chi^2_n$ are mutually independent.

In Section \ref{sec:group},
we show that if an estimator of the form
\begin{equation}\label{eq:equiv.est.1.intro}
 \delta_{\psi}(X,U)=\left\{1-\psi(\|X\|^2/\|U\|^2)\right\}X,
 \text{ where }\psi:\mathbb{R}_+\to\mathbb{R},
\end{equation}
is admissible within the class of all such estimators, then it is also
admissible within a larger class of estimators, the class of all estimators of the form
$\delta_{\xi}(X,U)$ given by \eqref{eq:equiv.est.1.intro}.
Note that the risk of an equivariant estimator of the form \eqref{eq:equiv.est.1.intro}
is a function of $\lambda=\eta\|\theta\|^2$.
Section \ref{sec:equiv} studies equivariant estimators of the form \eqref{eq:equiv.est.1.intro}
which minimize the average risk with respect to a (proper) prior $\pi(\lambda)$ on
the maximal invariant $\lambda=\eta\|\theta\|^2$.
We  give an expression for this average risk and for the equivariant estimator
which effects the minimization. Additionally we show that this proper Bayes equivariant
estimator is equivalent to the generalized (and not proper) Bayes estimator
corresponding to the prior on $(\theta,\eta)$
\begin{equation*}
 \pi(\theta,\eta)=\eta^{-1}\{\eta\|\theta\|^2\}^{1-p/2}\pi(\eta\|\theta\|^2).
\end{equation*}
Further we demonstrate that such an estimator is admissible among the class of
estimators of the form \eqref{eq:equiv.est.1.intro} and hence \eqref{eq:equiv.est.2.intro}.

Section \ref{sec:equiv.blyth}, using \citeapos{Blyth-1951} method,
extends the class of estimators which are admissible within the class of 
estimators of the form \eqref{eq:equiv.est.1.intro}.
One main result gives admissibility under $\pi(\lambda)$ including
\begin{equation*}
 \pi(\lambda)=\lambda^\alpha \text{ for }-1/2<\alpha\leq 0,
\end{equation*}
for densities $f$ including the normal distribution
and many generalized multivariate $t$ distributions.
An interesting special case gives admissibility (within the class of equivariant estimators)
of the generalized Bayes equivariant estimator corresponding to $\pi(\lambda)\equiv 1$
or $\pi(\theta,\eta)=\eta^{-1}\{\eta\|\theta\|^2\}^{1-p/2}$.
Here the form of the generalized Bayes estimator is independent of the underlying
density $f(\cdot)$, as shown in \cite{Maruyama-2003b}. Further this estimator is minimax and dominates
the \cite{James-Stein-1961} estimator
\begin{equation*}
 \left(1-\frac{(p-2)/(n+2)}{\|X\|^2/\|U\|^2}\right)X
\end{equation*}
provided $f(\cdot)$ is non-increasing.
Another interesting result is on a variant of the James--Stein estimator
of the simple form
\begin{equation*}
 \left(1-\frac{(p-2)/(n+2)}{\|X\|^2/\|U\|^2+(p-2)/(n+2)+1}\right)X.
\end{equation*}
In the Gaussian case, this is the generalized Bayes equivariant estimator corresponding to 
\begin{equation*}
\pi(\lambda)=\lambda^{p/2-1}\int_0^\infty\frac{1}{(2\pi\xi)^{p/2}}\exp\left(-\frac{\lambda}{2\xi}\right)
\left(\frac{\xi}{1+\xi}\right)^{n/2}\rd \xi.
\end{equation*}
It is admissible within the class of equivariant estimators, 
and is minimax. 

In Section \ref{sec.regression}, we demonstrate that
our setting \eqref{eq:density.f} is regarded as a canonical form of a regression model
with an intercept and a general spherically symmetric error distribution,
where estimators of the form \eqref{eq:equiv.est.1.intro}
corresponds to 
estimators of the vector of regression coefficients of the form
$\{1-\psi_\star(R^2)\}\hat{\beta}$ where $\psi_\star:(0,1)\to\mathbb{R}$,
$\hat{\beta}$ is the vector of
least square estimators, and $R^2$ is the coefficient of determination.
Also estimators of the form \eqref{eq:equiv.est.2.intro} corresponds to
estimators of the vector of regression coefficients of the form
$\{1-\xi_\star(\mathsf{t})\}\hat{\beta}$ where $\xi_\star:\mathbb{R}^p\to\mathbb{R}$ and
$\mathsf{t}$ is the vector of $t$ values.
Hence, from the regression viewpoint, these two classes of equivariant estimators
are quite natural.

Section \ref{sec.cr} gives some concluding remarks.
Most of the proofs are given in a series of Appendices.

\AKA{In this paper, we consider admissibility within the (restricted) class of
equivariant estimators.
The ultimate goal in this direction is clearly to show that some equivariant 
estimators of the form \eqref{eq:equiv.est.1.intro} are admissible among
all estimators. Actually, when $p=1,2$, the natural estimator $X$ is admissible
among all estimators, as shown in Appendix \ref{p12}.
(This is not surprising but very expected. 
As far as we know, however, admissibility of $X$ with nuisance unknown scale $\eta$, has not yet
reported in any major journals and hence we provide the proof.)
When the Stein phenomenon with $p\geq 3$ occurs,
considering general admissibility of equivariant estimators
in the presence of nuisance parameter(s) has been a longstanding unsolved
problem as mentioned in \cite{James-Stein-1961} and \cite{Brewster-Zidek-1974}.
While our results with $p\geq 3$ do not resolve the general admissibility issue,
they do advance substantially our understanding of admissibility within the class
of equivariant estimators.}

\section{Admissibility in a broader sense}
\label{sec:group}
We consider two groups of transformations. In the following, let $S=\|U\|^2$.
\begin{description}
\item[Group I]\label{gI} 
 \begin{align*}
  X\to \gamma\Gamma X, \quad \theta\to \gamma\Gamma \theta, \quad S\to \gamma^2S, \quad \eta\to \eta/\gamma^2,
 \end{align*}
      where $\Gamma\in  \mathcal{O}(p)$, the group of $p\times p$ orthogonal matrices,
      and $\gamma\in\mathbb{R}_+$.
 \item[Group II]\label{gII}
 \begin{align*}
  X\to \gamma X, \quad \theta\to \gamma \theta, \quad S\to \gamma^2S, \quad \eta\to \eta/\gamma^2,
 \end{align*}
where $\gamma\in\mathbb{R}_+$.
\end{description}
Equivariant estimators for Group I should satisfy
\begin{align*}
  \delta(\gamma\Gamma X,\gamma^2S)=\gamma\Gamma \delta(X,S),
\end{align*}
and reduce to estimators of the form 
\begin{equation}\label{eq:equiv.est.0}
 \delta_{\psi}=\left\{1-\psi(\|X\|^2/S)\right\}X
\end{equation}
where $\psi:\mathbb{R}_+\to\mathbb{R}$.
Equivariant estimator for Group II should satisfy
\begin{align*}
  \delta(\gamma X,\gamma^2S)=\gamma \delta(X,S),
\end{align*}
and reduce to estimator of the form 
\begin{equation}\label{eq:equiv.est.1}
 \delta_{\xi}=\left\{1-\xi(X/\sqrt{S})\right\}X
\end{equation}
where $\xi:\mathbb{R}^p\to\mathbb{R}$.
It is useful to note the following.
\begin{lemma}\label{lem:risk.form}
 \begin{enumerate}
  \item 
The risk, $R(\theta,\eta, \delta_{\psi})=E\left[\eta\|\delta_{\psi}-\theta\|^2\right]$,
of an estimator $\delta_{\psi}$, is a function of $\eta\|\theta\|^2\in\mathbb{R}_+$.
\item 
The risk, $R(\theta,\eta, \delta_{\xi})$,
of an estimator $\delta_{\xi}$, is a function of $\eta^{1/2}\theta\in\mathbb{R}^p$.
 \end{enumerate}
\end{lemma}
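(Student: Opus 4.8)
The plan is to derive both statements from the standard principle that the risk of an equivariant decision rule is invariant under the induced group action on the parameter space, and hence is a function of a maximal invariant. I would carry out the argument in three steps for each part.

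First, I would record the equivariance of the sampling model. A direct change of variables in \eqref{eq:density.f} shows that if $(X,U)$ has the density $\eta^{(p+n)/2}f(\eta\{\|x-\theta\|^2+\|u\|^2\})$ indexed by $(\theta,\eta)$, then for $\gamma\in\mathbb{R}_+$ and $\Gamma\in\mathcal{O}(p)$ the transformed data $(\gamma\Gamma X,\gamma U)$ is distributed according to the density indexed by $(\gamma\Gamma\theta,\eta/\gamma^2)$: the Jacobian factor $\gamma^{p+n}$ combines with $\eta^{(p+n)/2}$ to reproduce $(\eta/\gamma^2)^{(p+n)/2}$, and orthogonality of $\Gamma$ keeps the argument of $f$ in the required form. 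This is exactly the action labelled Group I (and, with $\Gamma=I$, Group II), so the model is invariant under both groups.

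Second, I would verify equivariance of the estimators for their respective groups. For $\delta_\psi$ in \eqref{eq:equiv.est.0}, the ratio $\|X\|^2/S$ is unchanged under $X\mapsto\gamma\Gamma X$, $S\mapsto\gamma^2S$, so $\delta_\psi(\gamma\Gamma X,\gamma^2S)=\gamma\Gamma\,\delta_\psi(X,S)$; for $\delta_\xi$ in \eqref{eq:equiv.est.1}, the argument $X/\sqrt S$ is unchanged under $X\mapsto\gamma X$, $S\mapsto\gamma^2S$, so $\delta_\xi(\gamma X,\gamma^2S)=\gamma\,\delta_\xi(X,S)$. Feeding these identities into the risk integral and using the scale behaviour of the loss \eqref{eq:loss} (the $\eta$ and $\gamma^2$ factors cancel, and $\Gamma$ drops out by orthogonality) yields
\[
 R(\gamma\Gamma\theta,\eta/\gamma^2,\delta_\psi)=R(\theta,\eta,\delta_\psi),\qquad
 R(\gamma\theta,\eta/\gamma^2,\delta_\xi)=R(\theta,\eta,\delta_\xi),
\]
for all admissible $\gamma$ and $\Gamma$; that is, each risk is constant along the orbits of its group.

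Finally, I would identify the maximal invariants. Under Group I the quantity $\eta\|\theta\|^2$ is invariant, and conversely any two parameter points sharing this value lie in one orbit: taking $\gamma$ to match the two scale parameters, equality of $\eta\|\theta\|^2$ forces the two rescaled mean vectors to have equal norm, so some $\Gamma\in\mathcal{O}(p)$ carries one to the other. Hence $R(\theta,\eta,\delta_\psi)$ depends on $(\theta,\eta)$ only through $\eta\|\theta\|^2$, proving (1). Under Group II the quantity $\eta^{1/2}\theta$ is invariant (the scaling $\gamma$ cancels the $\gamma^{-1}$ coming from $\eta^{1/2}$), and two points with the same value of $\eta^{1/2}\theta$ are again related by a single scaling, so $R(\theta,\eta,\delta_\xi)$ depends only on $\eta^{1/2}\theta$, proving (2). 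The one step that needs genuine care is this maximal-invariant verification, namely checking that equality of the invariant guarantees the existence of a linking group element (in particular the orthogonal matrix in Group I); the remainder is bookkeeping with the change of variables and the cancellation of the scale factors.
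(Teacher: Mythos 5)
Your proposal is correct and is precisely the ``standard proof'' that the paper explicitly leaves to the reader: model invariance under Groups I and II, equivariance of $\delta_\psi$ and $\delta_\xi$ respectively, invariance of the loss, and identification of $\eta\|\theta\|^2$ and $\eta^{1/2}\theta$ as the maximal invariants. All the individual verifications (the Jacobian/density calculation, the cancellation of $\gamma$ in the risk, and the orbit argument showing maximality) check out.
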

The standard proof is left to the reader.

Let two classes of estimators be
\begin{align*}
 \mathcal{D}_\psi&=\left\{\delta_{\psi} \text{ with }\psi: \mathbb{R}_+\to\mathbb{R} \text{ given by \eqref{eq:equiv.est.0}}\right\}, \\
 \mathcal{D}_\xi&=\left\{\delta_{\xi}\text{ with }\xi: \mathbb{R}^p\to\mathbb{R}  \text{ given by \eqref{eq:equiv.est.1}}\right\}.
\end{align*}
Clearly it follows that $ \mathcal{D}_\psi\subset \mathcal{D}_\xi$.
We shall show that if $\delta \in \mathcal{D}_\psi$
is admissible among the class $\mathcal{D}_\psi$, 
then it is admissible among the class $\mathcal{D}_\xi$. 
The proof is due to Section 3 of \cite{Stein-1956},
based on the compactness of the orthogonal group $\mathcal{O}(p)$,
and the continuity of, the problem.

\begin{thm}\label{thm:stein.1956}
If $\delta \in \mathcal{D}_\psi$ is admissible among the class $\mathcal{D}_\psi$, 
then it is admissible among the class $\mathcal{D}_\xi$.
\end{thm}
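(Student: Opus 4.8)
The plan is to argue by contradiction, converting a hypothetical dominator in the larger class into a dominator within the smaller class by averaging over the orthogonal group. Suppose $\delta_\psi\in\mathcal{D}_\psi$ is admissible within $\mathcal{D}_\psi$ but \emph{not} admissible within $\mathcal{D}_\xi$, so that some $\delta_\xi\in\mathcal{D}_\xi$ satisfies $R(\theta,\eta,\delta_\xi)\le R(\theta,\eta,\delta_\psi)$ for all $(\theta,\eta)$, with strict inequality at some $(\theta_0,\eta_0)$. Let $\rd\Gamma$ denote the normalized Haar measure on the compact group $\mathcal{O}(p)$ and define
\begin{equation*}
 \bar\delta(X,S)=\int_{\mathcal{O}(p)}\Gamma^{\T}\delta_\xi(\Gamma X,S)\,\rd\Gamma,
\end{equation*}
which is well defined and of finite risk because $\delta_\xi$ dominates $\delta_\psi$. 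First I would verify that $\bar\delta$ is equivariant under Group I: scale equivariance is inherited directly from $\delta_\xi$, while the substitution $\Gamma\mapsto\Gamma\Gamma_0$ combined with the left invariance of $\rd\Gamma$ gives $\bar\delta(\Gamma_0X,S)=\Gamma_0\bar\delta(X,S)$. Hence $\bar\delta$ has the reduced form \eqref{eq:equiv.est.0} and belongs to $\mathcal{D}_\psi$.

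Next I would compare risks. Writing $\theta=\int_{\mathcal{O}(p)}\Gamma^{\T}(\Gamma\theta)\,\rd\Gamma$ and using convexity of $\|\cdot\|^2$ (Jensen's inequality) together with the fact that $\Gamma^{\T}$ preserves the Euclidean norm,
\begin{equation*}
 \|\bar\delta(X,S)-\theta\|^2\le\int_{\mathcal{O}(p)}\|\delta_\xi(\Gamma X,S)-\Gamma\theta\|^2\,\rd\Gamma.
\end{equation*}
Taking expectations under $(\theta,\eta)$ and multiplying by $\eta$, the key step is a change of variables: since the density \eqref{eq:density.f} is spherically symmetric, $(\Gamma X,U)$ under $\theta$ has the same law as $(X,U)$ under $\Gamma\theta$, so $\eta E_{\theta,\eta}\|\delta_\xi(\Gamma X,S)-\Gamma\theta\|^2=R(\Gamma\theta,\eta,\delta_\xi)$. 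This yields
\begin{equation*}
 R(\theta,\eta,\bar\delta)\le\int_{\mathcal{O}(p)}R(\Gamma\theta,\eta,\delta_\xi)\,\rd\Gamma.
\end{equation*}
By hypothesis $R(\Gamma\theta,\eta,\delta_\xi)\le R(\Gamma\theta,\eta,\delta_\psi)$ for every $\Gamma$, and by part (1) of Lemma \ref{lem:risk.form} the right-hand side equals $R(\theta,\eta,\delta_\psi)$, since $\|\Gamma\theta\|=\|\theta\|$. Hence $\bar\delta$ dominates $\delta_\psi$ at every $(\theta,\eta)$.

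Finally I would upgrade weak domination to strict domination somewhere, which contradicts the admissibility of $\delta_\psi$ within $\mathcal{D}_\psi$. If $\theta_0=0$ this is immediate, since then $\Gamma\theta_0\equiv 0$ and $R(0,\eta_0,\bar\delta)\le R(0,\eta_0,\delta_\xi)<R(0,\eta_0,\delta_\psi)$. If $\theta_0\neq 0$, the strict inequality $R(\theta_0,\eta_0,\delta_\xi)<R(\theta_0,\eta_0,\delta_\psi)$ holds at $\Gamma=I$, and I would invoke continuity of the map $\Gamma\mapsto R(\Gamma\theta_0,\eta_0,\delta_\xi)$ to extend it to a neighborhood of $I$ of positive Haar measure, giving $\int_{\mathcal{O}(p)} R(\Gamma\theta_0,\eta_0,\delta_\xi)\,\rd\Gamma<R(\theta_0,\eta_0,\delta_\psi)$ and therefore $R(\theta_0,\eta_0,\bar\delta)<R(\theta_0,\eta_0,\delta_\psi)$. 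I expect this continuity of the risk in the location parameter to be the only delicate point: it is precisely the ``continuity of the problem'' underlying Stein's argument and must be deduced from the moment assumptions \eqref{assmp.f.1} on $f$ (e.g.\ via dominated convergence), whereas the compactness of $\mathcal{O}(p)$ and the convexity of the loss \eqref{eq:loss} render the group averaging and the risk inequality routine.
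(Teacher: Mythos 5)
Your proposal is correct and follows essentially the same route as the paper's proof in Appendix \ref{sec:ap.stein.1956}: averaging $\Gamma^{\T}\delta_\xi(\Gamma X,S)$ over the Haar measure on $\mathcal{O}(p)$, using convexity of the loss to bound the risk of the average, invoking the orthogonal invariance of $R(\cdot,\eta,\delta_\psi)$ from Lemma \ref{lem:risk.form}, and upgrading to strict domination via continuity of the risk in $\Gamma$ (equivalently, the paper's observation that the set of $\Gamma$ giving strict inequality is a nonempty open set of positive invariant measure). The only cosmetic difference is that you write $\Gamma^{\T}\delta_\xi(\Gamma X,S)$ where the paper writes $\Gamma\delta_\xi(\Gamma^{-1}X,S)$, which is the same object.
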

\begin{proof}
See Appendix \ref{sec:ap.stein.1956}.
\end{proof}
 In this paper, we will investigate admissibility among the class $\mathcal{D}_\psi$.
 Admissibility admissibility among the class $\mathcal{D}_\xi$ then follows by Theorem
 \ref{thm:stein.1956}.

 \section{Proper Bayes equivariant estimators}
 \label{sec:equiv}
Recall that an equivariant estimator for Group I is given by
\begin{equation}\label{eq:equiv.est}
 \delta_{\psi}=\left\{1-\psi(\|X\|^2/S)\right\}X.
\end{equation}
Since, as noted in Lemma \ref{lem:risk.form},
the risk function of the estimator $\delta_\psi \in \mathcal{D}_\psi$,
$ R(\theta,\eta,\delta_{\psi})$, depends only on 
$\eta\|\theta\|^2\in\mathbb{R}_+$, 
it may be expressed as
\begin{equation}\label{eq:thetaeta}
 R(\theta,\eta,\delta_{\psi})=\tilde{R}(\eta\|\theta\|^2, \delta_{\psi}).
\end{equation}
Let $\lambda=\eta\|\theta\|^2\in\mathbb{R}_+$.
We assume the prior density on $\lambda$ is $\pi(\lambda)$, and
in this section, we assume the propriety of $\pi(\lambda)$, that is,
\begin{equation}\label{eq:propriety}
 \int_0^\infty\pi(\lambda) \rd \lambda=1.
\end{equation}
For an equivariant estimator $\delta_\psi$, we define
the Bayes equivariant risk as
\begin{equation}\label{eq:eBrisk}
B(\delta_\psi, \pi)=\int_0^\infty \tilde{R}(\lambda,\delta_{\psi})\pi(\lambda) \rd \lambda.
\end{equation}
In this paper, the estimator $\delta_{\psi}$ which minimizes $B(\delta_\psi, \pi)$,
is called the Bayes equivariant estimator and is denoted by $\delta_\pi$.
In the following, 
let
\begin{equation}\label{eq:c_n}
 c_m=\pi^{m/2}/\Gamma(m/2)\text{ for }m\in\mathbb{N}_+
\end{equation}
and
\begin{equation}\label{pipi*}
 \bar{\pi}(\lambda)=c_p^{-1}\lambda^{1-p/2}\pi(\lambda)
\end{equation}
so that $\bar{\pi}(\|\mu\|^2)$ is a proper probability density on $\mathbb{R}^p$, that is,
\begin{equation}\label{eq:propriety.0}
 \int_{\mathbb{R}^p}\bar{\pi}(\|\mu\|^2)\rd \mu=1.
\end{equation}
\begin{thm}\label{thm:proper.admissible}
Assume $\int_0^\infty\pi(\lambda) \rd \lambda=1$ and that $f$ satisfies \eqref{assmp.f.1}.
 \begin{enumerate}
  \item \label{thm:proper.admissible.0}
	The Bayes equivariant risk $B(\delta_\psi, \pi)$, \eqref{eq:eBrisk}, is given by
\begin{equation}\label{eq:Bayesequivrisk.thm}
 \begin{split}
 B(\delta_\psi, \pi) &=
  c_n\int_{\mathbb{R}^p}
  \psi(\|z\|^2)\left\{\psi(\|z\|^2) -2\left(1-\frac{z^\T M_2(z,\pi)}{\|z\|^2 M_1(z,\pi)}\right)\right\}\\
  &\qquad \times
  \|z\|^2 M_1(z,\pi) \rd z +p,
\end{split}
\end{equation}
      where $c_n$ is given by \eqref{eq:c_n} and 
\begin{equation}\label{eq:thm:proper.admissible.M1.M2}
	\begin{split}
  M_1(z,\pi)&=
 \iint \eta^{(2p+n)/2}f(\eta\{\|z-\theta\|^2+1\})
\bar{\pi}(\eta\|\theta\|^2) \rd \theta   \rd \eta,  \\
 M_2(z,\pi)&=
\iint \theta
  \eta^{(2p+n)/2} f(\eta\{\|z-\theta\|^2+1\}) \bar{\pi}(\eta\|\theta\|^2) \rd \theta  \rd \eta.
	\end{split}      
\end{equation}
  \item \label{thm:proper.admissible.1}
	Given $\pi(\lambda)$, the minimizer of $B(\delta_\psi, \pi)$ with respect to $\psi$ 
      is
\begin{equation}\label{eq:eq_Bayes_sol.thm}
\psi_\pi(\|z\|^2)= \argmin_\psi B(\delta_\psi, \pi) =1-\frac{z^\T M_2(z,\pi)}{\|z\|^2 M_1(z,\pi)}.
\end{equation}
  \item \label{thm:proper.admissible.2}
	The Bayes equivariant estimator
\begin{align}\label{eq:delta_pi_thm}
\delta_\pi=\left\{1- \psi_\pi(\|X\|^2/S)\right\}X
\end{align}
	is equivalent to the generalized Bayes estimator of $\theta$ with respect to
	the joint prior density $\eta^{-1}\eta^{p/2}\bar{\pi}(\eta\|\theta\|^2)$
	where $\bar{\pi}(\lambda)=c_p^{-1}\lambda^{1-p/2}\pi(\lambda)$.
  \item \label{thm:proper.admissible.3}
	The Bayes equivariant estimator $\delta_\pi$ is admissible among the class $\mathcal{D}_\psi$.
 \end{enumerate}
\end{thm}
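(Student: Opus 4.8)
The plan is to prove the four parts in order: parts 1--3 reduce the Bayes equivariant problem to a pointwise quadratic minimization, and part 4 then deduces admissibility within $\mathcal{D}_\psi$ by the standard ``a unique Bayes rule is admissible'' argument, now carried out inside the restricted class.

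For part 1, I would first invoke the propriety relation \eqref{pipi*} to convert the prior integral over $\lambda$ into an integral over $\mathbb{R}^p$: since $\tilde R(\lambda,\delta_\psi)$ depends on $(\theta,\eta)$ only through $\lambda=\eta\|\theta\|^2=\|\eta^{1/2}\theta\|^2$, passing to polar coordinates gives $B(\delta_\psi,\pi)=\int_{\mathbb{R}^p}\tilde R(\|\mu\|^2,\delta_\psi)\bar\pi(\|\mu\|^2)\rd\mu$, which, on taking $\eta=1$ and $\theta=\mu$, is an ordinary Bayes risk at fixed scale with prior $\bar\pi$. I would then expand $\|\delta_\psi-\mu\|^2=\|x-\mu\|^2-2\psi(\|x\|^2/S)x^\T(x-\mu)+\psi^2(\|x\|^2/S)\|x\|^2$ and integrate out $u$ through $S=\|u\|^2$, which contributes the constant $c_n$. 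The decisive step is the change of variables $(x,\mu,S)\mapsto(z,\theta,\eta)$ given by $x=\sqrt S\,z$, $\mu=\sqrt S\,\theta$, $\eta=S$, under which $f(\|x-\mu\|^2+S)=f(\eta\{\|z-\theta\|^2+1\})$, $\bar\pi(\|\mu\|^2)=\bar\pi(\eta\|\theta\|^2)$, and the Jacobian together with the factor $\eta$ from the loss produces exactly the weight $\eta^{(2p+n)/2}$ appearing in $M_1,M_2$. Collecting the $\theta$-constant and $\theta$-linear pieces against $M_1(z,\pi)$ and $M_2(z,\pi)$, and noting that the $\|x-\mu\|^2$ term integrates to the baseline value $p$ by \eqref{assmp.f.1}, yields \eqref{eq:Bayesequivrisk.thm}.

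Part 2 is then immediate: for each fixed $z$ the integrand in \eqref{eq:Bayesequivrisk.thm} is the strictly convex quadratic $\psi^2-2\psi(1-z^\T M_2/(\|z\|^2M_1))$ with positive weight $\|z\|^2M_1(z,\pi)$, whose pointwise minimizer is \eqref{eq:eq_Bayes_sol.thm}; because spherical symmetry forces $M_1(z,\pi)$ to depend on $z$ only through $\|z\|^2$ and $M_2(z,\pi)$ to be parallel to $z$, this minimizer is itself a function of $\|z\|^2$ and hence lies in $\mathcal{D}_\psi$, so it is the genuine minimizer over the class. For part 3 I would write the generalized Bayes rule for the prior $\eta^{p/2-1}\bar\pi(\eta\|\theta\|^2)$ under loss \eqref{eq:loss}, namely $E[\eta\theta\mid x,u]/E[\eta\mid x,u]$, and apply the scaling $\eta\mapsto\eta/S$, $\theta\mapsto\sqrt S\,\theta$ to show it equals $\sqrt S\,M_2(z,\pi)/M_1(z,\pi)$ at $z=x/\sqrt S$; using $M_2\parallel z$ this coincides with $\{1-\psi_\pi(\|x\|^2/S)\}x$.

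The substance of the theorem is part 4, which I would prove by the Bayes-uniqueness route confined to $\mathcal{D}_\psi$. Suppose some $\delta_{\psi'}\in\mathcal{D}_\psi$ dominated $\delta_\pi$, so $\tilde R(\lambda,\delta_{\psi'})\le\tilde R(\lambda,\delta_\pi)$ for all $\lambda$ with strict inequality somewhere. Integrating against the proper $\pi$ gives $B(\delta_{\psi'},\pi)\le B(\delta_\pi,\pi)$, while $\delta_\pi$ minimizes $B(\cdot,\pi)$, so the two Bayes equivariant risks are equal. By the strict convexity from part 2 the minimizer of \eqref{eq:Bayesequivrisk.thm} is unique off the null set $\{z:\|z\|^2M_1(z,\pi)=0\}$, so $\psi'=\psi_\pi$ a.e.\ $\rd z$ and hence $\delta_{\psi'}=\delta_\pi$ almost everywhere in the sample space, forcing $\tilde R(\lambda,\delta_{\psi'})\equiv\tilde R(\lambda,\delta_\pi)$ and contradicting the strict inequality. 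The finiteness required to run this is free, since $B(\delta_\pi,\pi)\le p<\infty$ by taking $\psi\equiv0$ in \eqref{eq:Bayesequivrisk.thm}. I expect the main obstacle to be the bookkeeping in part 1---checking that the change of variables produces precisely the weight $\eta^{(2p+n)/2}$ and the constant $c_n$, and that the baseline term reduces to $p$---together with the positivity claim $\|z\|^2M_1(z,\pi)>0$ a.e.\ used in part 4, which underwrites both the uniqueness of the minimizer and the passage from a.e.\ equality of $\psi$ to equality of the entire risk function.
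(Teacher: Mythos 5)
Your proposal follows essentially the same route as the paper: the same reduction of the Bayes equivariant risk to an integral over $\mathbb{R}^p$ against $\bar{\pi}$, the same rescaling change of variables producing the weights $M_1$ and $M_2$, pointwise minimization of the resulting quadratic for Part 2, the same scaling identity for Part 3, and the uniqueness-of-the-Bayes-solution argument for Part 4 (which the paper states in one line and you merely spell out). The only cosmetic difference is that you fix $\eta=1$ before changing variables while the paper keeps $\eta$ general and absorbs it via $\eta_*=\eta s$; these are equivalent.
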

\begin{proof}
 See Appendix \ref{sec:ap.proper.admissible}.
\end{proof}

\begin{remark}
 As shown in Appendix \ref{sec:ap.nu.neq.-1},
 the generalized Bayes estimator of $\theta$ with respect to
	the joint prior density $\eta^\nu\eta^{p/2}\bar{\pi}(\eta\|\theta\|^2)$
for any $\nu$ is a member of the class $\mathcal{D}_\psi$.
Part \ref{thm:proper.admissible.2} of Theorem \ref{thm:proper.admissible}
applies only to the special case of $\nu=-1$. 
The admissibility results of this section and of Section \ref{sec:equiv.blyth}
 apply only to this special case of $\nu=-1$ and imply neither admissibility
 or inadmissibility of generalized Bayes estimators if $\nu\neq -1$.
 Also note that while $\pi(\lambda)$ is assumed proper in this section,
 the prior on $(\theta,\eta)$, $\eta^{-1}\eta^{p/2}\bar{\pi}(\eta\|\theta\|^2)$, is never proper
 since
 \begin{align*}
  \int_0^\infty \!\!\int_{\mathbb{R}^p} \eta^{-1}\eta^{p/2}\bar{\pi}(\eta\|\theta\|^2)\rd\theta\rd \eta 
=\int_0^\infty \!\!\int_{\mathbb{R}^p} \eta^{-1}\bar{\pi}(\|\mu\|^2)\rd\mu\rd \eta 
=1\times\int_0^\infty \frac{\rd \eta}{\eta}=\infty.
 \end{align*}
\end{remark}

 \section{Admissible Bayes equivariant estimators through the Blyth method}
 \label{sec:equiv.blyth}
Even if $\pi(\lambda)$ on $\mathbb{R}_+$ (and hence $\bar{\pi}(\|\mu\|^2)$ on $\mathbb{R}^p$) is improper,
that is
\begin{equation}\label{eq:improper.pi.0}
\int_{\mathbb{R}^p}\bar{\pi}(\|\mu\|^2) \rd \mu
  =\int_0^\infty\pi(\lambda) \rd \lambda=\infty,
\end{equation}
the estimator $\delta_\pi$ given by \eqref{eq:delta_pi_thm} can be defined
if $M_1(z,\pi)$ and $M_2(z,\pi)$ given by \eqref{eq:thm:proper.admissible.M1.M2} 
are both finite, and the admissibility of $\delta_\pi$ within the class of
equivariant estimators can be investigated through the Blyth method.

We consider the Bayes equivariant risk difference under $\pi_i(\lambda)$
which is proper, but not necessarily standardized; i.e., $\int_0^\infty \pi_i(\lambda) \rd \lambda <\infty$.
Let $\delta_\pi$ and $\delta_{\pi i}$ be Bayes equivariant estimators with respect to
$\pi(\lambda)$ and $\pi_i(\lambda)$, respectively.
By Parts \ref{thm:proper.admissible.0} and \ref{thm:proper.admissible.1} of Theorem \ref{thm:proper.admissible}, 
the Bayes equivariant risk difference under $\pi_i(\lambda)$ is given as follows:
\begin{align}
& \ndiff(\delta_\pi,\delta_{\pi i};\pi_i) \label{eq:ndiff}\\
 & =\int_0^\infty \{R(\lambda,\delta_\pi)-R(\lambda,\delta_{\pi i})\}\pi_i(\lambda) \rd \lambda \notag\\
 &=c_n\int_{\mathbb{R}^p}\left(\left\{\psi_{\pi}^2(\|z\|^2)
 -2\psi_{\pi}(\|z\|^2)\psi_{\pi i}(\|z\|^2)\right\} \right. \notag\\
 &\qquad \left. -
 \left\{\psi_{\pi i}^2(\|z\|^2)
 -2\psi_{\pi i}(\|z\|^2)\psi_{\pi i}(\|z\|^2)\right\}\right)\|z\|^2M_1(z,\pi_i) \rd z \notag \\
 &=c_n\int_{\mathbb{R}^p}\oldiff(z;\delta_\pi,\delta_{\pi i};\pi_i) \rd z,\notag
\end{align}
where $c_n$ is given by \eqref{eq:c_n} and where
\begin{equation}\label{eq:oldiff}
 \oldiff(z;\delta_\pi,\delta_{\pi i};\pi_i)
  =\{\psi_{\pi }(\|z\|^2)-\psi_{\pi i}(\|z\|^2)\}^2\|z\|^2M_1(z,\pi_i).
\end{equation}

There are several versions of the Blyth method. 
For our purpose, the following version from \cite{Brown-1971} and \cite{Brown-Hwang-1982}
 is useful.
\begin{thm}\label{blyth-method}
 Assume that the sequence $\pi_i(\lambda)$ for $i=1,2,\dots,$ 
 satisfies
\begin{enumerate}[label= BL.\arabic*]
\item \label{BL.-1}$\pi_1(\lambda)\leq \pi_2(\lambda)\leq \dots$  for any $\lambda\geq 0$ and $\lim_{i\to\infty}\pi_i(\lambda)=\pi(\lambda)$.
 \item \label{BL.0} $ \dps \int_0^\infty \pi_i(\lambda)\rd \lambda <\infty $ for any fixed $i$.
 \item \label{BL.1} $ \dps\int_0^1 \pi_{1}(\lambda)\rd \lambda >\gamma $ for some positive
       $\gamma>0$.
 \item \label{BL.2}$\dps \lim_{i\to\infty} \ndiff(\delta_\pi,\delta_{\pi i};\pi_i)=0$.
\end{enumerate}
 Then $ \delta_\pi$ is admissible among the class $\mathcal{D}_\psi$.
\end{thm}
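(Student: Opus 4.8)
The plan is to argue by contradiction via the Blyth method, exploiting the fact that each $\delta_{\pi i}$ is genuinely Bayes among $\mathcal{D}_\psi$ while $\delta_\pi$ is only a formal (possibly improper) Bayes rule. Suppose $\delta_\pi$ is \emph{not} admissible among $\mathcal{D}_\psi$. Then there is $\delta_{\psi_0}\in\mathcal{D}_\psi$ with $R(\lambda,\delta_{\psi_0})\le R(\lambda,\delta_\pi)$ for all $\lambda\ge 0$ and strict inequality at some $\lambda_0$. By Lemma~\ref{lem:risk.form} both risks are functions of $\lambda=\eta\|\theta\|^2$ alone, and they are continuous in $\lambda$; hence there are an interval $I\ni\lambda_0$ and a constant $\epsilon>0$ with
\[
R(\lambda,\delta_\pi)-R(\lambda,\delta_{\psi_0})\ge \epsilon\quad\text{for all }\lambda\in I,
\]
while the difference stays nonnegative everywhere.

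The key step is a two-term decomposition of the Bayes equivariant risk difference \eqref{eq:ndiff}. Inserting $R(\lambda,\delta_{\psi_0})$ gives $\ndiff(\delta_\pi,\delta_{\pi i};\pi_i)=A_i+B_i$ with
\[
A_i=\int_0^\infty\{R(\lambda,\delta_\pi)-R(\lambda,\delta_{\psi_0})\}\pi_i(\lambda)\,\rd\lambda,
\]
\[
B_i=\int_0^\infty\{R(\lambda,\delta_{\psi_0})-R(\lambda,\delta_{\pi i})\}\pi_i(\lambda)\,\rd\lambda.
\]
Here $A_i\ge 0$ since its integrand is nonnegative. For $B_i$ I would invoke Part~\ref{thm:proper.admissible.1} of Theorem~\ref{thm:proper.admissible}: because $\pi_i$ is proper by \ref{BL.0}, the estimator $\delta_{\pi i}$ minimizes $B(\delta_\psi,\pi_i)$ over all $\delta_\psi\in\mathcal{D}_\psi$ (the normalizing constant of $\pi_i$ does not affect the minimizer), and $\delta_{\psi_0}\in\mathcal{D}_\psi$, so $B(\delta_{\psi_0},\pi_i)\ge B(\delta_{\pi i},\pi_i)$, i.e. $B_i\ge 0$. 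Consequently $0\le A_i\le \ndiff(\delta_\pi,\delta_{\pi i};\pi_i)$.

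Now I would let $i\to\infty$. By \ref{BL.2} the upper bound tends to $0$, so $A_i\to 0$. On the other hand, the integrand of $A_i$ is nonnegative and, by the monotonicity in \ref{BL.-1}, $\pi_i(\lambda)\uparrow\pi(\lambda)$ pointwise; the monotone convergence theorem therefore yields
\[
A_i\ \longrightarrow\ \int_0^\infty\{R(\lambda,\delta_\pi)-R(\lambda,\delta_{\psi_0})\}\pi(\lambda)\,\rd\lambda\ \ge\ \epsilon\int_I\pi(\lambda)\,\rd\lambda.
\]
Since $\pi\ge\pi_1$ by \ref{BL.-1} and \ref{BL.1} guarantees that $\pi_1$ carries positive mass on the region where improvement occurs, the last lower bound is strictly positive, contradicting $A_i\to 0$. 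This contradiction shows that no dominating $\delta_{\psi_0}$ can exist, so $\delta_\pi$ is admissible among $\mathcal{D}_\psi$.

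The step I expect to be the main obstacle is the final positivity claim $\int_I\pi\,\rd\lambda>0$: one must guarantee that the interval $I$ on which $\delta_{\psi_0}$ strictly improves actually carries positive mass under the limiting prior. This is precisely where the non-degeneracy \ref{BL.1} and the monotone lower bound $\pi\ge\pi_1$ from \ref{BL.-1} are indispensable, and in the applications $\pi(\lambda)>0$ for all $\lambda>0$, so any nondegenerate $I$ qualifies. A secondary technical point is the justification of the monotone convergence interchange, which is immediate once one records that the risk difference is a nonnegative continuous function of $\lambda$, a fact implicit in Lemma~\ref{lem:risk.form}.
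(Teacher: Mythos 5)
Your overall architecture is the right one and matches the paper's in outline: argue by contradiction, use the Bayes optimality of $\delta_{\pi i}$ under the proper prior $\pi_i$ to reduce \ref{BL.2} to a statement about the risk improvement of the putative dominator, and derive a strictly positive lower bound from \ref{BL.1}. The decomposition $\ndiff=A_i+B_i$ with $B_i\ge 0$ is correct and is essentially the paper's first inequality. However, there is a genuine gap at the final step, and you have identified its location yourself without closing it. From domination you only get strict improvement at some $\lambda_0$, hence (by continuity) on some interval $I\ni\lambda_0$. Nothing in \ref{BL.-1}--\ref{BL.2} guarantees $\int_I\pi\,\rd\lambda>0$: the hypothesis \ref{BL.1} controls only the mass of $\pi_1$ on $[0,1]$, while $I$ may lie far from $[0,1]$, and even if $I\subset[0,1]$ the prior could vanish on $I$ while still satisfying $\int_0^1\pi_1>\gamma$. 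Appealing to ``in the applications $\pi(\lambda)>0$ for all $\lambda>0$'' is not available, since the theorem is stated (and used) with only \ref{BL.-1}--\ref{BL.2} as hypotheses.

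The missing idea, which is exactly how the paper's proof closes this hole, is a convexification step: replace the dominating estimator $\delta_{\psi_0}=\delta'$ by the midpoint $\delta''=(\delta_\pi+\delta')/2\in\mathcal{D}_\psi$. Since the loss is strictly convex and $\delta'\ne\delta_\pi$ on a set of positive probability under every $(\theta,\eta)$ (the density $f$ is everywhere positive), Jensen's inequality gives
\[
\tilde{R}(\lambda,\delta'')<\tfrac12\{\tilde{R}(\lambda,\delta')+\tilde{R}(\lambda,\delta_\pi)\}\le\tilde{R}(\lambda,\delta_\pi)
\quad\text{for \emph{every} }\lambda,
\]
i.e.\ strict improvement everywhere, not just near $\lambda_0$. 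Continuity on the compact set $[0,1]$ then yields a uniform gap $\epsilon>0$ there, and
\[
\ndiff(\delta_\pi,\delta_{\pi i};\pi_i)\ \ge\ \int_0^1\{\tilde{R}(\lambda,\delta_\pi)-\tilde{R}(\lambda,\delta'')\}\pi_1(\lambda)\,\rd\lambda\ \ge\ \epsilon\gamma>0,
\]
which contradicts \ref{BL.2}. With this one modification your argument goes through; as written, it does not prove the theorem under the stated hypotheses.
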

\begin{proof}
 See Appendix \ref{sec:ap.blyth-method}.
\end{proof} 

We consider the following assumptions on $\pi$ in addition to \eqref{eq:improper.pi.0}.
\begin{assumptionpi}\mbox{}
 \begin{enumerate}[label= A.\arabic*]
  \item \label{AA1}$\pi(\lambda)$ is differentiable.
  \item \label{AA2} (Behavior around the origin) 
For $\lambda\in[0,1]$, there exist $\alpha>-1/2$ and $\nu(\lambda)$ 
such that
\begin{align*}
\pi(\lambda)=\lambda^{\alpha}\nu(\lambda), 
\end{align*}
where
\begin{align*}
0<\nu(0)<\infty\text{ and }\lim_{\lambda\to 0} \lambda\nu'(\lambda)=0.
\end{align*}
  \item \label{AA3} (Asymptotic behavior)
       Let
$ \kappa(\lambda)=\lambda\pi'(\lambda)/\pi(\lambda)$.
Either \ref{AA3.1} or \ref{AA3.2} is assumed;
\begin{enumerate}[label= A.3.\arabic*]
 \item \label{AA3.1} $\dps -1\leq \lim_{\lambda\to\infty}\kappa(\lambda)< 0$
 \item \label{AA3.2}$\dps\lim_{\lambda\to\infty}\kappa(\lambda)= 0$.
       Further either \ref{AA3.2.1} or \ref{AA3.2.2} is assumed;
\begin{enumerate}[label= A.3.2.\arabic*]
 \item \label{AA3.2.1}$\kappa(\lambda)$ is eventually monotone increasing and approaches $0$ from below.
\item \label{AA3.2.2}$ \dps\limsup_{\lambda\to\infty}\,\{\log \lambda\}|\kappa(\lambda)|<1$.
\end{enumerate}
\end{enumerate}
	\end{enumerate}
\end{assumptionpi}
A typical prior $\pi(\lambda)$ satisfying Assumptions \ref{AA1}--\ref{AA3}, 
corresponding to a generalized \citeapos{Strawderman-1971} prior, is given by
 \begin{equation}\label{billsprior}
  \pi(\lambda;\alpha,\beta,b)=c_p\lambda^{p/2-1}
   \int_b^\infty\frac{1}{(2\pi\xi)^{p/2}}\exp\left(-\frac{\lambda}{2\xi}\right)
  (\xi-b)^\alpha(1+\xi)^\beta\rd \xi.
 \end{equation}  
Assumptions \ref{AA1}--\ref{AA3} are satisfied when
$ \{-1\leq \alpha+\beta\leq 0, \ \alpha>-1, \ b>0\}$ or
$\{-1\leq \alpha+\beta\leq 0, \alpha>-1/2,  b=0\}$.
See Appendix \ref{bill-1971} for the proof.
Note that the power prior $ \pi(\lambda)=\lambda^{\alpha}$ for $-/2<\alpha \leq 0$,
which will be considered in Section \ref{sub.sec.interesting},
corresponds to the case $\beta=0$ and $b=0$.

For a generalized prior $\pi(\lambda)$ satisfying Assumptions \ref{AA1}--\ref{AA3},
consider the sequence given by $\pi_i(\lambda)=\pi(\lambda)h_i^2(\lambda)$ where 
$h_i(\lambda)$, for $\lambda\geq 0$ and $i=1,2,\dots,$ 
is defined by
\begin{align}\label{eq:new_hir}
 h_i(\lambda)=1-\frac{\log\log(\lambda +e)}{\log\log(\lambda +e+i)},
\end{align}
and $e=\exp(1)$. 
It is clear that  $\pi_i$ satisfies \ref{BL.-1} of Theorem \ref{blyth-method}.
In Lemma \ref{lem:h_i} of  Appendix \ref{sec:assumption}, we show that
$\pi_i$ also satisfies \ref{BL.0} and \ref{BL.1} of Theorem \ref{blyth-method}.

For \ref{BL.2}, note that $\ndiff(\delta_\pi,\delta_{\pi i};\pi_i)$ given by \eqref{eq:ndiff}
is a functional of $f$ as well as $\pi$ and $\pi_i$.
Some additional assumptions on $f$ (as well as \eqref{assmp.f.1}) are required as follows;
\begin{assumptionf}\mbox{}
 \begin{enumerate}[label= F.\arabic*]
  \item \label{FF1} 
	$0<f(t)<\infty$ for any $t\geq 0$.
  \item \label{FF2} $f$ is differentiable.
  \item \label{FF3} the asymptotic behavior:
Either \ref{FF3.1} or \ref{FF3.2} is assumed;
\begin{enumerate}[label= F.3.\arabic*]
 \item \label{FF3.1} $\dps  \limsup_{t\to\infty}\, t\frac{f'(t)}{f(t)}<-\frac{p+n}{2}-2$.
 \item \label{FF3.2}  $\dps  \limsup_{t\to\infty}\, t\frac{f'(t)}{f(t)}<-\frac{p+n}{2}-3$.
 \end{enumerate}
 \end{enumerate}
\end{assumptionf}
We note that, in addition to the normal distribution,
\begin{equation*}
 f_G(t)=(2\pi)^{-(p+n)/2}\exp(-t/2),
\end{equation*}
an interesting flatter tailed class, also
satisfying Assumptions \ref{FF1}--\ref{FF3},
is given by the multivariate generalized Student $t$ with 
\begin{align*}
f(t;a,b) 
&=\int_0^\infty\frac{f_G(t/g)}{g^{(p+n)/2}}
 \frac{g^{-a/2-1}}{\Gamma(a/2)(2/b)^{a/2}}\exp\left(-\frac{b}{2g}\right)\rd g \\
 &=\frac{\Gamma((p+n+a)/2)}{(\pi b)^{(p+n)/2}\Gamma(a/2)}
(1+t/b)^{-(p+n+a)/2}.
\end{align*}
For Assumptions \ref{FF3.1} and \ref{FF3.2}, $a>4$ and $a>6$ are needed respectively.

The main result on admissibility of $\delta_\pi$ given by \eqref{eq:delta_pi_thm}
among the class $\mathcal{D}_\psi$ through
the Blyth method is as follows.
\begin{thm}\label{thm:mainmain}\mbox{}
\begin{description}
 \item[Case I]\label{thm:mainmain.1}  Assume Assumptions \ref{AA1}, \ref{AA2} and \ref{AA3.1} on $\pi$
and Assumptions \ref{FF1}, \ref{FF2} and \ref{FF3.1} on $f$.
	    Then the 
	    estimator $\delta_\pi$ given by \eqref{eq:delta_pi_thm}
	    is admissible among the class $\mathcal{D}_\psi$.
 \item[Case II]\label{thm:mainmain.2}  Assume Assumptions \ref{AA1}, \ref{AA2} and \ref{AA3.2} on $\pi$
and Assumptions \ref{FF1}, \ref{FF2} and \ref{FF3.2} on $f$.
	    Then the estimator
	    $\delta_\pi$ given by \eqref{eq:delta_pi_thm}
	    is admissible among the class $\mathcal{D}_\psi$.
\end{description}
\end{thm}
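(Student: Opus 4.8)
The plan is to apply the Blyth method of Theorem~\ref{blyth-method} to the sequence $\pi_i(\lambda)=\pi(\lambda)h_i^2(\lambda)$ with $h_i$ given by \eqref{eq:new_hir}. As already noted, \ref{BL.-1} is immediate from $h_i\uparrow 1$, and Lemma~\ref{lem:h_i} supplies \ref{BL.0} and \ref{BL.1}. Both Case~I and Case~II therefore reduce to verifying \ref{BL.2}, i.e. that $\ndiff(\delta_\pi,\delta_{\pi i};\pi_i)\to 0$. By \eqref{eq:ndiff}--\eqref{eq:oldiff} this is exactly the statement that
\begin{equation*}
 c_n\int_{\mathbb{R}^p}\{\psi_\pi(\|z\|^2)-\psi_{\pi i}(\|z\|^2)\}^2\,\|z\|^2 M_1(z,\pi_i)\,\rd z\longrightarrow 0,
\end{equation*}
so everything comes down to estimating this single integral; the two cases differ only in how its tail is controlled.

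Next I would exploit the explicit minimizer \eqref{eq:eq_Bayes_sol.thm}. Writing the posterior-type measure with density proportional to $\eta^{(2p+n)/2}f(\eta\{\|z-\theta\|^2+1\})\bar{\pi}(\eta\|\theta\|^2)$ over $(\theta,\eta)$, the quantity $1-\psi_\pi(\|z\|^2)=z^\T M_2(z,\pi)/(\|z\|^2M_1(z,\pi))$ is $z^\T/\|z\|^2$ times the posterior mean of $\theta$, and passing from $\pi$ to $\pi_i$ simply reweights this measure by the factor $h_i^2(\eta\|\theta\|^2)$. Hence $\psi_\pi-\psi_{\pi i}$ is, up to the factor $z^\T/\|z\|^2$, the change in a posterior mean produced by the multiplicative weight $h_i^2$. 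A Brown--Hwang argument (integration by parts against the posterior in the $\theta$ variable) then bounds the integrand $\oldiff$ by a gradient term: since $h_i$ depends on $(\theta,\eta)$ only through $\lambda=\eta\|\theta\|^2$, one has $\|\nabla_\theta h_i(\eta\|\theta\|^2)\|^2=4\eta\|\theta\|^2\{h_i'(\lambda)\}^2$, and I would obtain a bound of the schematic form
\begin{equation*}
 \int_{\mathbb{R}^p}\oldiff(z;\delta_\pi,\delta_{\pi i};\pi_i)\,\rd z
 \;\le\; C\int_{\mathbb{R}^p}\!\!\iint \eta\|\theta\|^2\{h_i'(\eta\|\theta\|^2)\}^2\,
 \eta^{(2p+n)/2}f(\eta\{\|z-\theta\|^2+1\})\bar{\pi}(\eta\|\theta\|^2)\,\rd\theta\,\rd\eta\,\rd z.
\end{equation*}

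Carrying out the $z$- and $\eta$-integrations by Fubini then collapses this to a one-dimensional integral of the form $\int_0^\infty \lambda\{h_i'(\lambda)\}^2\pi(\lambda)w(\lambda)\,\rd\lambda$, where the weight $w(\lambda)$ encodes the effect of marginalizing over the scale $\eta$ and is therefore governed by the tail of $f$. With the iterated-logarithm choice \eqref{eq:new_hir} a direct computation gives, to leading order, $h_i'(\lambda)\sim-\{(\lambda+e)\log(\lambda+e)\log\log(\lambda+e+i)\}^{-1}$, so that $\lambda\{h_i'(\lambda)\}^2\to0$ pointwise and, for large $\lambda$, is bounded uniformly in $i$ by a fixed multiple of $\{\lambda(\log\lambda)^2(\log\log\lambda)^2\}^{-1}$. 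Convergence to zero then follows once the $\lambda$-integrand is shown to be controlled at both ends: near the origin this uses Assumption~\ref{AA2} (the exponent $\alpha>-1/2$ is precisely what makes the $\lambda^{1+2\alpha}$-type behaviour integrable at $0$), while the tail uses the asymptotic control of $\pi$ through $\kappa(\lambda)=\lambda\pi'/\pi$ together with the moment conditions on $f$. The split into Case~I and Case~II mirrors the two tail regimes: under \ref{AA3.1} the prior decays like a genuine negative power and the weaker tail condition \ref{FF3.1} on $f$ suffices, whereas under \ref{AA3.2} the prior is nearly flat ($\kappa\to0$), the weight $w$ grows faster, and the stronger moment condition \ref{FF3.2} is required; the subcases \ref{AA3.2.1} and \ref{AA3.2.2} provide the two alternative devices (eventual monotonicity of $\kappa$, or $\limsup\{\log\lambda\}|\kappa|<1$) for pushing the borderline tail integral to zero.

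I expect the main obstacle to be exactly this tail analysis: establishing the asymptotics of $M_1(z,\pi_i)$ as $\|z\|\to\infty$ --- equivalently, identifying the weight $w(\lambda)$ and its growth rate as a function of the tails of $\pi$ and $f$ --- and then showing $\int^\infty \lambda\{h_i'(\lambda)\}^2\pi(\lambda)w(\lambda)\,\rd\lambda\to0$ in the delicate regimes of \ref{AA3.2}, where a crude dominated-convergence argument may be insufficient and one must instead split the integral over $\lambda$ and invoke \ref{AA3.2.1} or \ref{AA3.2.2} directly. A secondary technical point to dispatch along the way is the finiteness of $M_1(z,\pi)$ and $M_2(z,\pi)$, so that $\delta_\pi$ is well defined; this again rests on the interplay of the prior tail with Assumption~\ref{FF3} on $f$.
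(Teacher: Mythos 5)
Your overall architecture is the paper's: take $\pi_i=\pi h_i^2$ with the iterated-logarithm $h_i$ of \eqref{eq:new_hir}, dispose of \ref{BL.-1}, \ref{BL.0}, \ref{BL.1} via Lemma \ref{lem:h_i}, reduce both cases to \ref{BL.2}, and rewrite the numerator of $\psi_\pi$ by integration by parts in $\theta$ against $F(t)=\tfrac12\int_t^\infty f$. The gap is in the core estimate. Your schematic bound claims that $\int\oldiff\,\rd z$ is dominated by the single gradient term $C\iiint \eta\|\theta\|^2\{h_i'(\eta\|\theta\|^2)\}^2\cdots$, which would tend to $0$ on its own. But the integration by parts and the product rule $\nabla_\theta\bar{\pi}_i=(\nabla_\theta\bar{\pi})h_i^2+\bar{\pi}\,\nabla_\theta h_i^2$ leave, in addition to that term (the paper's $\Delta_{1i}$), a second term $\Delta_{2i}$ built from $\|\nabla_\theta\bar{\pi}/\bar{\pi}\|^2\bar{\pi}$ --- the analogue of Brown--Hwang's $\int\|\nabla g\|^2/g$ condition. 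This term contains no $h_i'$, does not vanish as $i\to\infty$, and can only serve as an integrable dominating function for a dominated-convergence argument applied to $\ndiff$ itself (using $\psi_{\pi i}\to\psi_\pi$ pointwise). Its integrability reduces to $\int_1^\infty\pi(\lambda)\lambda^{-1}\rd\lambda<\infty$, which holds under \ref{AA3.1} but \emph{fails} under \ref{AA3.2} (where $\kappa\to0$). So your single-term bound cannot close Case II at all: the paper must first exploit an exact algebraic cancellation --- via Lemma \ref{lem:general_integration_by_parts}, the leading $(p-2)/(n+2)$ pieces of the $\pi$- and $\pi_i$-expressions cancel in the difference \emph{before} any absolute values are taken --- leaving residual terms weighted by $\kappa(\eta\|\theta\|^2)$ (small, handled by Part \ref{lem:pi:8} of Lemma \ref{lem:pi}) and by $(z^\T\theta/\|\theta\|^2-1)$ (the terms $\Delta_{3i}$, $\Delta_{4i}$).

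A secondary inaccuracy: your picture of a weight $w(\lambda)$ in a one-dimensional $\lambda$-integral, ``governed by the tail of $f$,'' is not how the bounds resolve. After the changes of variables the $f$- and $\pi$-contributions factorize completely (e.g.\ $\int\sup_i\Delta_{1i}\,\rd z\le \mathrm{const}\cdot A_1A_2$ with $A_1=\int\lambda\pi\sup_i\{h_i'\}^2\rd\lambda$ and $A_2=\int t^{(p+n)/2-1}\{F/f\}^2 f\,\rd t$). The stronger condition \ref{FF3.2} in Case II is not needed to tame a faster-growing $w$; it is needed because the new term $\Delta_{3i}$ carries the factor $|z^\T\theta/\|\theta\|^2-1|\le\sqrt{\|z-\theta\|^2+1}/\|\theta\|$, which inserts an extra half-power of $t$ into the $f$-moment (replacing $\mathcal{F}$ by $\tilde{\mathcal{F}}=t^{1/2}F/f$) and forces the $f_\star$ estimate of Part \ref{lem:f.2} of Lemma \ref{lem:f}. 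You would also need the paper's separate treatment of $\alpha>0$ versus $-1/2<\alpha\le0$ near the origin (the cutoff $k(\lambda)=\lambda^{1/2}I_{[0,1]}+I_{(1,\infty)}$ and Lemma \ref{lem:f_around_origin}), which your appeal to ``$\alpha>-1/2$ makes things integrable at $0$'' glosses over.
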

The proof of Theorem \ref{thm:mainmain}, or
essentially, equivalently the proof of \ref{BL.2},
\begin{equation*}
 \lim_{i\to \infty}\ndiff(\delta_\pi,\delta_{\pi i};\pi_i) =0
\end{equation*}
under the above Assumptions, is provided in Appendices \ref{sec:ap.pre.main}, \ref{sec:caseI} and \ref{sec:caseII}.
Prior to these sections, some preliminary needed results
on $\pi$, $f$ and $\pi_i$ are given in Appendix \ref{sec:assumption}.

\begin{remark}
 The basic idea behind the sequence $h_i$ given by \eqref{eq:new_hir} comes from the
 $h_i$ of \cite{Brown-Hwang-1982},
 \begin{align}\label{eq:BH_hir}
 h_i(\lambda)=\begin{cases}
	      1 & \lambda \leq 1  \\
1-\log \lambda/\log i & 1 \leq \lambda\leq i \\
0 & \lambda>i. 
	     \end{cases} 
\end{align} 
 A smoothed version of the above is
\begin{equation}\label{eq:new_hir_1}
 h_i(\lambda)=1-\frac{\log(\lambda+1)}{\log(\lambda+1+i)}.
\end{equation}
The sequence $h_i$ given by \eqref{eq:new_hir} is more slowly changing in both $r$ and $i$,
in order to handle priors with flatter tail than treated in \cite{Brown-Hwang-1982}.
Also, with smooth $\pi_i=\pi h_i^2$, the proofs become simpler. 
\end{remark}
\begin{remark}
Assumption \ref{AA3} is a sufficient condition for
 \begin{equation}\label{eq:known_variance_0}
  \int_1^\infty \frac{\rd \lambda}{\lambda \pi(\lambda)}   =\infty \ 
   \Leftrightarrow \
   \int_1^\infty\frac   {\rd \lambda}{\lambda^{p/2} \bar{\pi}(\lambda)}=\infty,
 \end{equation}
 which is related to admissibility in the known variance case as follows.
 \cite{Maruyama-2009} showed that, in the problem of estimating $\mu$ of $X\sim N_p(\mu,I)$,
 regularly varying priors $ g(\|\mu\|^2)$ with
\begin{equation}\label{eq:known_variance_1}
 \int_1^\infty \frac{\rd \lambda}{\lambda^{p/2}g(\lambda)}=\infty
\end{equation}
 lead to admissibility, that is, the (generalized) Bayes estimator
       \begin{equation*}
X+\nabla \log m_g(\|X\|^2)
       \end{equation*}
 where
 \begin{equation}
  m_g(\|x\|^2)=\frac{1}{(2\pi)^{p/2}}\int \exp(-\|x-\mu\|^2/2)g(\|\mu\|^2)\rd \mu
 \end{equation}
 is admissible.
 As \cite{Maruyama-2009} pointed out, the sufficient condition \eqref{eq:known_variance_1},
 which depends directly on the prior $g(\|\mu\|^2)$, 
 is closely related to \citeapos{Brown-1971} sufficient condition for admissibility
 \begin{equation*}
   \int_1^\infty \frac{\rd r}{r^{p/2}m_g(r)}=\infty,
 \end{equation*}
which depends on the marginal distribution and only indirectly on the prior.
Note also that Assumption \ref{AA3} is tight for the non-integrability of \eqref{eq:known_variance_0}, in the
 sense that,
among the class $\pi(\lambda)\approx\{\log\lambda\}^b$ with $b\in\mathbb{R}$,
\begin{equation*}
  \int_1^\infty \frac{\rd \lambda}{\lambda \{\log\lambda\}^{1-\epsilon}}=\infty, \ 
  \int_1^\infty \frac{\rd \lambda}{\lambda \log\lambda}=\infty, \
  \int_1^\infty \frac{\rd \lambda}{\lambda \{\log\lambda\}^{1+\epsilon}}<\infty
\end{equation*}
 where in the first expression $\{\log\lambda\}^{1-\epsilon}$ satisfies  Assumption \ref{AA3},
 and in the second, $\log \lambda$ does not satisfy  Assumption \ref{AA3}.
 Actually, in the third case, $ \{\log\lambda\}^{1+\epsilon}$,
 the corresponding Bayes equivariant estimator is inadmissible as shown
 in \cite{Maruyama-Strawderman-2017}.
\end{remark}

  \subsection{Some interesting cases}
\label{sub.sec.interesting}
Here we present three interesting special cases of our main general theorem.
\begin{corollary}\label{thm:interesting.1}
Assume 
Assumptions \ref{FF1}, \ref{FF2} and \ref{FF3.2} on $f$.
 \begin{enumerate}
  \item \label{thm:interesting.1.1}
	Then $\delta_\pi$ with $\pi\equiv 1$, or equivalently the generalized Bayes estimator under
the prior on $(\theta,\eta)$ given by 
	\begin{align*}
  \eta^{-1}\eta^{p/2}\left\{\eta\|\theta\|^2\right\}^{(2-p)/2},
	\end{align*}
is admissible among the class $\mathcal{D}_\psi$. 
  \item \label{thm:interesting.1.2}
	The form of the generalized Bayes estimator does not depend on $f$ and is given by
$\left\{1-\psi_0(W)\right\}X$ where $W=\|X\|^2/S$ and
	\begin{align*}
 \psi_0(w)=
  \frac{\int_0^1 t^{p/2-1}(1+wt)^{-(p+n)/2-1}\rd t}{\int_0^1 t^{p/2-2}(1+wt)^{-(p+n)/2-1}\rd t}.
	\end{align*}
  \item \label{thm:interesting.1.3}
	This estimator is minimax simultaneously for all such $f$.
  \item \label{thm:interesting.1.4}
	This estimator dominates the James--Stein estimator
		\begin{align*}
 \left(1-\frac{p-2}{n+2}\frac{S}{\|X\|^2}\right)X
\end{align*}
if $f$ is nonincreasing.
 \end{enumerate}
\end{corollary}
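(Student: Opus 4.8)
The plan is to dispatch the four parts in order, using Theorem~\ref{thm:mainmain} for admissibility, Theorem~\ref{thm:proper.admissible} for the explicit form, and the $f$-free structure coming out of Part~\ref{thm:interesting.1.2} to drive the last two claims.

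For Part~\ref{thm:interesting.1.1} I would simply check that $\pi\equiv 1$ meets the hypotheses of Case~II of Theorem~\ref{thm:mainmain}. Writing $\pi(\lambda)=\lambda^{0}\cdot 1$, Assumption~\ref{AA2} holds with $\alpha=0>-1/2$ and $\nu\equiv 1$, so $0<\nu(0)<\infty$ and $\lambda\nu'(\lambda)\equiv 0\to 0$; Assumption~\ref{AA1} is immediate. Since $\kappa(\lambda)=\lambda\pi'(\lambda)/\pi(\lambda)\equiv 0$, Assumption~\ref{AA3.2} holds, and $\{\log\lambda\}|\kappa(\lambda)|\equiv 0<1$ gives~\ref{AA3.2.2}. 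As \ref{FF1}, \ref{FF2} and \ref{FF3.2} on $f$ are assumed in the statement, Case~II of Theorem~\ref{thm:mainmain} applies verbatim and yields admissibility of $\delta_\pi$ within $\mathcal{D}_\psi$.

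For Part~\ref{thm:interesting.1.2} I would compute $\psi_\pi$ from Part~\ref{thm:proper.admissible.1} of Theorem~\ref{thm:proper.admissible} with $\bar\pi(\lambda)=c_p^{-1}\lambda^{1-p/2}$, so that the $\eta$-exponent in $M_1,M_2$ becomes $(2p+n)/2+(1-p/2)=(p+n)/2+1$. Integrating out $\eta$ by the scale change $s=\eta\{\|z-\theta\|^2+1\}$ pulls out the common constant $\int_0^\infty s^{(p+n)/2+1}f(s)\,\rd s$, finite by~\ref{FF3.2}, which cancels in the ratio $z^\T M_2/(\|z\|^2 M_1)$; this cancellation is exactly why $\delta_\pi$ is $f$-free. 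What remains is $\int\|\theta\|^{2-p}(\|z-\theta\|^2+1)^{-(p+n)/2-2}\,\rd\theta$ and its $\theta$-weighted analogue. To evaluate these I would insert the Gamma representations $\|\theta\|^{2-p}\propto\int_0^\infty u^{p/2-2}e^{-u\|\theta\|^2}\rd u$ and $(\|z-\theta\|^2+1)^{-(p+n)/2-2}\propto\int_0^\infty\lambda^{(p+n)/2+1}e^{-\lambda(\|z-\theta\|^2+1)}\rd\lambda$, carry out the resulting Gaussian $\theta$-integral (mean $\tfrac{\lambda}{u+\lambda}z$, normalizer $(\pi/(u+\lambda))^{p/2}e^{-\frac{u\lambda}{u+\lambda}\|z\|^2}$), and substitute $u=\lambda s$ followed by $t=s/(1+s)\in(0,1)$. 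The $\lambda$-integral contributes a Gamma factor and all powers of $(1-t)$ cancel identically, leaving
\[
\frac{z^\T M_2(z,\pi)}{\|z\|^2M_1(z,\pi)}
=\frac{\int_0^1 t^{p/2-2}(1-t)(1+wt)^{-(p+n)/2-1}\rd t}
{\int_0^1 t^{p/2-2}(1+wt)^{-(p+n)/2-1}\rd t},\qquad w=\|z\|^2=\frac{\|X\|^2}{S}.
\]
Subtracting from $1$ turns the factor $(1-t)$ into $t$ in the numerator and reproduces the stated $\psi_0$.

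For Parts~\ref{thm:interesting.1.3} and~\ref{thm:interesting.1.4} I would exploit that, by Part~\ref{thm:interesting.1.2}, $\delta_\pi$ coincides with the harmonic-prior estimator of \cite{Maruyama-2003b}, whose form does not depend on $f$. Minimaxity (Part~\ref{thm:interesting.1.3}) amounts to $R(\theta,\eta,\delta_\pi)\le p=R(\theta,\eta,X)$ for all $\theta,\eta$, which I would obtain from a Stein-type unbiased-risk / integration-by-parts identity valid across the whole spherically symmetric family; because $\psi_0$ is $f$-free, the same bound holds \emph{simultaneously} for every admissible $f$. For Part~\ref{thm:interesting.1.4}, writing both rules as $\{1-\psi(W)\}X$ with $W=\|X\|^2/S$ reduces the comparison to a pointwise relation between $\psi_0(W)$ and the James--Stein shrinkage $\{(p-2)/(n+2)\}/W$, and the monotonicity hypothesis on $f$ is precisely what is needed to sign the cross term in the risk difference. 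Both properties are established for this estimator in \cite{Maruyama-2003b}, so I would either invoke that reference or reproduce its risk computation.

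The main obstacle is the bookkeeping in Part~\ref{thm:interesting.1.2}: in the double Gamma-integral representation the $f$-dependent constant and all the spurious $(1-t)$ factors must cancel cleanly, and landing on the exponent $-(p+n)/2-1$ rather than $-(p+n)/2$ hinges on carrying the correct power of $\eta$ (the loss $\eta\|\delta-\theta\|^2$ makes the Bayes rule the weighted ratio $E[\eta\theta\mid X,U]/E[\eta\mid X,U]$, which is already built into the $\eta^{(2p+n)/2}$ weight in $M_1,M_2$). For Parts~\ref{thm:interesting.1.3}--\ref{thm:interesting.1.4} the delicate point is verifying the sign conditions in the risk identities uniformly over the spherically symmetric family, not merely in the Gaussian case.
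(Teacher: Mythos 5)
Your proposal is correct, and for Part \ref{thm:interesting.1.1} it is exactly the paper's argument: the paper likewise just observes that $\pi\equiv 1$ satisfies Assumptions \ref{AA1}, \ref{AA2} (with $\alpha=0$, $\nu\equiv 1$) and \ref{AA3.2}, and invokes Case II of Theorem \ref{thm:mainmain}. Where you diverge is in Parts \ref{thm:interesting.1.2}--\ref{thm:interesting.1.4}: the paper simply cites \cite{Maruyama-2003b} for the $f$-free form and the James--Stein domination, and \cite{Cellier-et-al-1989} for the simultaneous minimaxity, whereas you sketch self-contained derivations. Your computation for Part \ref{thm:interesting.1.2} checks out --- the $\eta$-exponent $(p+n)/2+1$, the cancellation of the $f$-dependent constant $\int_0^\infty s^{(p+n)/2+1}f(s)\,\rd s$ in the ratio, and the clean cancellation of all $(1-t)$ powers after the substitutions $u=\lambda s$, $t=s/(1+s)$ all land on the stated $\psi_0$ --- so your route buys a proof that does not lean on the external reference, at the cost of the bookkeeping you flag. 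The one attribution slip is Part \ref{thm:interesting.1.3}: simultaneous minimaxity over the whole class of $f$ is not in \cite{Maruyama-2003b} and does not follow from Part \ref{thm:interesting.1.4} (which needs $f$ nonincreasing, a hypothesis absent from Part \ref{thm:interesting.1.3}); the Stein-type integration-by-parts identity uniform over the spherically symmetric family with a residual vector that you describe is precisely the content of \cite{Cellier-et-al-1989}, which is the reference the paper uses and the one you should cite or reproduce there.
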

\begin{proof}
 For Part \ref{thm:interesting.1.1}, Assumptions \ref{AA1}, \ref{AA2} and \ref{AA3.2} are satisfied by
 $\pi(\lambda)\equiv 1$.
 Parts \ref{thm:interesting.1.2} and \ref{thm:interesting.1.4} are both shown by \cite{Maruyama-2003b}.
Part \ref{thm:interesting.1.3} is shown by \cite{Cellier-et-al-1989}.
\end{proof}

\begin{corollary}\label{thm:interesting.2.main}
Assume 
Assumptions \ref{FF1}, \ref{FF2} and \ref{FF3.1} on $f$. Let $\alpha\in(-1/2,0)$.
 \begin{enumerate}
  \item \label{thm:interesting.2.1}
	Then $\delta_\pi$ with $\pi(\lambda)=\lambda^{\alpha} $,
	or equivalently the generalized Bayes estimator under
the prior on $(\theta,\eta)$ given by 
 \begin{align*}
  \eta^{-1}\eta^{p/2}\left\{\eta\|\theta\|^2\right\}^{\alpha+(2-p)/2},
 \end{align*}
	is admissible among the class $\mathcal{D}_\psi$. 
  \item \label{thm:interesting.2.2}
	The form of the estimator does not depend on $f$ and is given by
$\left\{1-\psi_\alpha(W)\right\}X$ where $W=\|X\|^2/S$ and
	\begin{equation}\label{psialphageneral}
 \psi_\alpha(w)=
  \frac{\int_0^1 t^{p/2-\alpha-1}(1-t)^{\alpha}(1+wt)^{-(p+n)/2-1}\rd t}
  {\int_0^1 t^{p/2-\alpha-2}(1-t)^{\alpha}(1+wt)^{-(p+n)/2-1}\rd t}.
	\end{equation}
  \item \label{thm:interesting.2.3}
	This estimator is minimax when
\begin{equation*}
 -\left(5+\frac{2}{p-2}+\frac{3p}{n+2}\right)^{-1} \leq \alpha <0.
\end{equation*}
 \end{enumerate}
\end{corollary}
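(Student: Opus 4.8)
For Part \ref{thm:interesting.2.1}, my plan is simply to check that the power prior $\pi(\lambda)=\lambda^{\alpha}$ with $\alpha\in(-1/2,0)$ meets the hypotheses of Case I of Theorem \ref{thm:mainmain}. Differentiability \ref{AA1} is immediate. For \ref{AA2} I would write $\pi(\lambda)=\lambda^{\alpha}\nu(\lambda)$ with $\nu\equiv 1$, so that $\alpha>-1/2$, $0<\nu(0)=1<\infty$, and $\lambda\nu'(\lambda)\equiv 0\to 0$. For \ref{AA3.1}, note that $\kappa(\lambda)=\lambda\pi'(\lambda)/\pi(\lambda)\equiv\alpha$, whence $\lim_{\lambda\to\infty}\kappa(\lambda)=\alpha\in[-1,0)$. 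Since Assumptions \ref{FF1}, \ref{FF2} and \ref{FF3.1} on $f$ are assumed in the statement, Case I of Theorem \ref{thm:mainmain} then applies directly and yields admissibility of $\delta_\pi$ within $\mathcal{D}_\psi$. This part is therefore a routine verification.

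For Part \ref{thm:interesting.2.2}, I would start from the explicit minimizer \eqref{eq:eq_Bayes_sol.thm}, namely $\psi_\pi(\|z\|^2)=1-z^\T M_2(z,\pi)/\{\|z\|^2M_1(z,\pi)\}$, and insert $\bar{\pi}(\lambda)=c_p^{-1}\lambda^{1-p/2+\alpha}$ (from \eqref{pipi*} with $\pi(\lambda)=\lambda^\alpha$) into the definitions \eqref{eq:thm:proper.admissible.M1.M2} of $M_1$ and $M_2$. The crucial observation is that one may carry out the $\eta$–integration first: with $A=\|z-\theta\|^2+1$, the substitution $s=\eta A$ gives $\int_0^\infty\eta^{(2p+n)/2+\alpha+1-p/2}f(\eta A)\,\rd\eta=A^{-\{(p+n)/2+\alpha+2\}}\int_0^\infty s^{(p+n)/2+\alpha+1}f(s)\,\rd s$, the last integral being a finite $z$– and $\theta$–free constant (finiteness is guaranteed by the tail condition \ref{FF3.1}, which forces $s^{(p+n)/2+\alpha+1}f(s)\lesssim s^{\alpha-1}$ at infinity). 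Because this same constant factors out of both $M_1$ and $M_2$, it cancels in the ratio, which is precisely why the resulting $\psi_\alpha$ does not depend on $f$. What remains is the pair of $p$–dimensional integrals $\int_{\mathbb{R}^p}\|\theta\|^{2\alpha+2-p}A^{-q}\,\rd\theta$ and its $\theta$–weighted analogue, with $q=(p+n)/2+\alpha+2$; reducing these by spherical coordinates and the substitution turning $\|\theta\|^2$ into the Beta variable $t\in(0,1)$ produces the one–dimensional integrals displayed in \eqref{psialphageneral}. This computation parallels that of \cite{Maruyama-2003b} for the case $\alpha=0$, and indeed $\psi_\alpha$ reduces to the $\psi_0$ of Corollary \ref{thm:interesting.1} when $\alpha=0$.

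For Part \ref{thm:interesting.2.3}, recall that the natural estimator $X$ has constant risk $R(\theta,\eta,X)=p$ and is minimax, so it suffices to show $R(\theta,\eta,\delta_\pi)\leq p$ for all $(\theta,\eta)$. I would first record a Baranchik–type sufficient condition for minimaxity of an estimator $\{1-\psi(W)\}X\in\mathcal{D}_\psi$, $W=\|X\|^2/S$, of the kind established robustly (uniformly over all $f$ satisfying \eqref{assmp.f.1}) in \cite{Cellier-et-al-1989} and \cite{Maruyama-2003b}: that $w\psi(w)$ be nondecreasing and bounded above by the appropriate multiple of $(p-2)/(n+2)$. The task then reduces to verifying this condition for the specific $\psi_\alpha$ of \eqref{psialphageneral}: using the Beta–integral representation I would show $w\psi_\alpha(w)$ is monotone and estimate its supremum in terms of $p$, $n$ and $\alpha$. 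The main obstacle is precisely this quantitative estimate — controlling the incomplete–Beta–type ratio $\psi_\alpha$ and, in particular, its limiting value as $w\to\infty$ finely enough to extract the exact threshold $\alpha\geq-\{5+2/(p-2)+3p/(n+2)\}^{-1}$, below which the uniform bound fails. I expect this to require careful handling of the sign of the derivative of $w\psi_\alpha(w)$ and of the $w\to\infty$ asymptotics of the defining integrals, and it is here that the bulk of the analytic work lies.
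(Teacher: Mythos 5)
Your treatment of Parts \ref{thm:interesting.2.1} and \ref{thm:interesting.2.2} is essentially what the paper does: Part \ref{thm:interesting.2.1} is exactly the verification of Assumptions \ref{AA1}, \ref{AA2} and \ref{AA3.1} for $\pi(\lambda)=\lambda^\alpha$ (with $\kappa\equiv\alpha$), and your computation for Part \ref{thm:interesting.2.2} --- integrating out $\eta$ first so that the $f$-dependent constant $\int_0^\infty s^{(p+n)/2+1+\alpha}f(s)\,\rd s$ cancels between $M_1$ and $M_2$, then reducing the remaining $\theta$-integrals to Beta-type integrals --- is a faithful reconstruction of the argument the paper simply outsources to \cite{Maruyama-2003b}.

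The gap is in Part \ref{thm:interesting.2.3}. You propose to verify a Baranchik-type condition in which $w\psi_\alpha(w)$ is nondecreasing and bounded by the appropriate multiple of $(p-2)/(n+2)$. But for $\alpha\in(-1/2,0)$ the function $\phi_\alpha(w)=w\psi_\alpha(w)$ is \emph{not} monotone --- this is recorded explicitly in Appendix \ref{sec.ap.alam}, quoting Part (iii) of Corollary 3.1 of \cite{Maruyama-Strawderman-2009} --- so the monotonicity step of your plan fails and the classical sufficient condition of \cite{Cellier-et-al-1989} and \cite{Maruyama-2003b} is not applicable here. The paper instead invokes \citeapos{Kubokawa-2009} relaxed condition: it suffices that $w\phi'(w)/\phi(w)\geq -c$ and $0\leq\phi\leq 2(p-2-2c)/(n+2+2c)$ for some $c>0$, i.e.\ a controlled rate of decrease is traded against a smaller upper bound. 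Combining the two facts imported from \cite{Maruyama-Strawderman-2009}, namely $\phi_\alpha\leq\phi_\star(\alpha)=(p/2-\alpha-1)/\{n/2+\alpha+1+\alpha(p+n)/2\}$ and $w\phi_\alpha'(w)/\phi_\alpha(w)\geq -c(\alpha)$ with $c(\alpha)=-(p/2-\alpha)\alpha/(\alpha+1)$, the requirement $\phi_\star(\alpha)\leq 2\{p-2-2c(\alpha)\}/\{n+2+2c(\alpha)\}$ reduces, after the polynomial estimate carried out in Appendix \ref{sec.ap.alam}, to exactly the stated threshold $\alpha\geq -\{5+2/(p-2)+3p/(n+2)\}^{-1}$. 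Without switching to this (or an equivalent) criterion that tolerates non-monotone $w\psi(w)$, your route cannot deliver Part \ref{thm:interesting.2.3}.
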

\begin{proof}
 For Part \ref{thm:interesting.1.1}, Assumptions \ref{AA1}, \ref{AA2} and \ref{AA3.1} are satisfied by
 $\pi(\lambda)=\lambda^\alpha$ for $\alpha\in(-1/2,0)$.
 Part \ref{thm:interesting.2.2} is shown by \cite{Maruyama-2003b}.
 For Part \ref{thm:interesting.2.3}, see \cite{Maruyama-Strawderman-2009}
 and Appendix \ref{sec.ap.alam}.
\end{proof}

The following corollary relates to the so-called ``simple Bayes estimators'' from \cite{Maru-Straw-2005}.
\begin{corollary}\label{thm:interesting.3.main}
 Assume $f$ is Gaussian. Then
 the simple Bayes estimator
\begin{align*}
 \left(1-\frac{a}{(a+1)(b+1)+\|X\|^2/S}\right)X
\end{align*}
with $a\geq (p-2)/(n+2)$ and $b\geq 0$ is admissible among the class $\mathcal{D}_\psi$.
Furthermore, the estimator with $(p-2)/(n+2)\leq a\leq 2(p-2)/(n+2)$ is minimax.
\end{corollary}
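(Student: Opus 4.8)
The plan is to realize the simple Bayes estimator as the Bayes equivariant estimator $\delta_\pi$ of a member of the generalized \citeapos{Strawderman-1971} family \eqref{billsprior}, and then quote Theorem \ref{thm:mainmain}. Concretely, I would take in \eqref{billsprior} the parameters $\beta=-n/2$, the same $b$ as in the statement, and $\alpha$ determined by $\alpha+1=(p+n)/\{2(a+1)\}$; note that $a\geq(p-2)/(n+2)$ forces $\alpha\leq n/2$. The first task is to check that this choice indeed produces the stated estimator in the Gaussian case.

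For that computation I would use Part \ref{thm:proper.admissible.2} of Theorem \ref{thm:proper.admissible}, i.e. write $\delta_\pi$ as the generalized Bayes rule for the scaled loss \eqref{eq:loss} under the prior $\eta^{-1}\eta^{p/2}\bar\pi(\eta\|\theta\|^2)$, which minimizes the posterior risk at $\delta=E[\eta\theta\mid X,S]/E[\eta\mid X,S]$. Substituting the scale-mixture-of-normals representation of $\bar\pi$ coming from \eqref{billsprior}, integrating out $\theta$ (Gaussian) and then $\eta$ (gamma), and writing $w=\|X\|^2/S$, the estimator becomes $\{1-\psi(w)\}X$ with $\psi$ a ratio of two $\xi$-integrals whose integrands differ only by one power of $(1+\xi)$. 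The key simplification is that with $\beta=-n/2$ the numerator integrand loses its $(1+\xi)$ factor and reduces, after the shift $\zeta=\xi-b$, to $\int_0^\infty \zeta^{\alpha}(w+1+b+\zeta)^{-(p+n)/2-1}\rd\zeta$, a pure power of $w+1+b$ expressible through a Beta integral; the denominator is a two-term combination of the same Beta type. Dividing and collecting the resulting Gamma ratios should yield exactly $\psi(w)=a/\{(a+1)(b+1)+w\}$, with $a+1=(p+n)/\{2(\alpha+1)\}$ and the shift $1+b$ matching the prior's lower limit. This explicit reduction is the main obstacle; it is essentially the content of \cite{Maru-Straw-2005}, to which I would appeal if the bookkeeping is carried out there.

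With the prior identified, I would verify Assumptions \ref{AA1}--\ref{AA3}. Differentiability \ref{AA1} is immediate, and \ref{AA2} holds with exponent $p/2-1>-1/2$ (using $p\geq3$) and $\nu(0)$ equal to a convergent, strictly positive $\xi$-integral. For \ref{AA3} I would compute the tail index $\gamma:=\lim_{\lambda\to\infty}\kappa(\lambda)=\alpha+\beta=\alpha-n/2$, since $\pi(\lambda)\sim C\lambda^{\alpha+\beta}$; one checks that $\gamma=0$ exactly when $a=(p-2)/(n+2)$, in which case \ref{AA3.2} applies (Case II), while $-1\leq\gamma<0$ for larger $a$, in which case \ref{AA3.1} applies (Case I). Confirming that $\gamma$ remains in $[-1,0]$ over the claimed range of $a$ is the delicate point of this step. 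Since the Gaussian $f$ has $tf'(t)/f(t)=-t/2\to-\infty$, it satisfies \ref{FF1}, \ref{FF2} and both \ref{FF3.1} and \ref{FF3.2}, so whichever case of Theorem \ref{thm:mainmain} is invoked, admissibility of $\delta_\pi$ within $\mathcal{D}_\psi$ follows.

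Finally, for minimaxity I would argue by the usual Baranchik-type sufficient condition rather than through the prior. For the present estimator $w\,\psi(w)=aw/\{(a+1)(b+1)+w\}$ is nonnegative and nondecreasing in $w$ with supremum $a$, so the condition that $w\psi(w)$ be nondecreasing with $0\le w\psi(w)\le 2(p-2)/(n+2)$ holds precisely when $a\leq 2(p-2)/(n+2)$; combined with $a\geq(p-2)/(n+2)$ this gives the stated minimax range, paralleling Part \ref{thm:interesting.2.3} of Corollary \ref{thm:interesting.2.main} and the references cited there.
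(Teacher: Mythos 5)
Your identification of the prior (the member of \eqref{billsprior} with $\beta=-n/2$, the same $b$, and $\alpha+1=(p+n)/\{2(a+1)\}$), the Gaussian computation reducing $\psi_\pi$ to $a/\{(a+1)(b+1)+w\}$, and the Baranchik-type argument for the minimaxity range all follow the same route as the paper (Appendix \ref{sec.ap.2005} and \cite{Maru-Straw-2005}). However, there is a genuine gap in the admissibility step, precisely at the point you flag as ``delicate'': you claim that $\gamma=\alpha+\beta=\alpha-n/2$ stays in $[-1,0]$ for all $a\geq(p-2)/(n+2)$, so that Theorem \ref{thm:mainmain} always applies. This is false. Since $\gamma=(p+n)/\{2(a+1)\}-1-n/2$, one has $\gamma=-1$ exactly at $a=p/n$, and $\gamma<-1$ for all $a>p/n$; because $p/n>(p-2)/(n+2)$, a whole half-line of the claimed range of $a$ violates Assumption \ref{AA3}, and the Blyth machinery of Theorem \ref{thm:mainmain} cannot be invoked there.

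The resolution, which is what the paper does, is to split into two regimes. When $\alpha+\beta<-1$ (equivalently $a>p/n$), the prior $\pi(\lambda)$ is integrable, so $\delta_\pi$ is a \emph{proper} Bayes equivariant estimator and its admissibility in $\mathcal{D}_\psi$ follows directly from Part \ref{thm:proper.admissible.3} of Theorem \ref{thm:proper.admissible}, with no need for Assumption \ref{AA3} or the Blyth argument. When $-1\leq\alpha+\beta\leq 0$ (equivalently $(p-2)/(n+2)\leq a\leq p/n$), the prior is improper, Assumptions \ref{AA1}--\ref{AA3} do hold (note that in this regime $\alpha\geq n/2-1\geq 0$, so the $\alpha>-1/2$ requirement for $b=0$ is automatic), and Theorem \ref{thm:mainmain} applies as you describe. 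Without the proper-prior branch your argument only establishes the corollary for $a\leq p/n$.
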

\begin{proof}
 The estimator is (generalized) Bayes equivariant estimator with respect to
 $\pi(\lambda;\alpha,\beta,b)$ given by \eqref{billsprior}
 with $\beta=-n/2$ and $\alpha=(p+n)/\{2(a+1)\}-1$. 
 See \cite{Maru-Straw-2005} and Appendix \ref{sec.ap.2005}.
\end{proof}

\section{Canonical form of the regression setup}
\label{sec.regression}
 Suppose a linear regression model is used to relate $y$ to the
$p$ predictors $z_1, \dots, z_p$,
\begin{equation} \label{full-model}
y = \alpha 1_m+Z\beta + \eta^{-1/2}\epsilon
\end{equation}
where $\alpha$ is an unknown intercept parameter,
$1_m$ is an $m\times 1$ vector of ones,
$Z=(z_1,\dots, z_p)$ is an $m \times p$ design matrix,
and $\beta$ is a $p \times 1$ vector of unknown
regression coefficients.
In the error term, $\eta$ is an unknown scalar and
 $\epsilon=(\epsilon_1,\dots,\epsilon_m)^\T$ 
 has a spherically symmetric distribution,
\begin{equation}\label{bep_sim_f}
\epsilon \sim \tilde{f}(\|\epsilon\|^2)
\end{equation}
where $\tilde{f}(\cdot)$ is the probability density,
$E[\epsilon]=0_m$, 
and $\mbox{Var}[\epsilon]=I_m$.
Hence the density of $y$ is
\begin{equation}
 y\sim \eta^{m/2}\tilde{f}(\eta\|y-\alpha 1_m-Z\beta\|^2),
\end{equation}
where $\tilde{f}$ satisfies 
\begin{equation*}
 \int_{\mathbb{R}^m}\tilde{f}(\|v\|^2) \rd v=1 
\end{equation*}
for $v=(v_1,\dots,v_m)^\T\in\mathbb{R}^m$.
We assume that the columns of $Z$ have been centered so that 
$z_i^\T 1_m = 0$ for $1 \leq i \leq p$.
We also assume that $m > p+1$ and $\{z_1,\dots,z_p\}$ are linearly independent, 
which implies that 
\begin{equation*}
\mbox{rank} \ Z=p.
\end{equation*}

Let $Q$ be an $m\times m$ orthogonal matrix of the form
\begin{equation*}
 Q =(1_m/\sqrt{m}, Z(Z^\T Z)^{-1/2}, W)
\end{equation*}
where $W$ is $m\times (m-p-1)$ matrix which satisfies $ W^\T 1_m=0$, $ W^\T Z=0$
and $W^\T W=I_{m-p-1}$.
Also let $x=(Z^\T Z)^{-1/2}Z^\T y=(Z^\T Z)^{1/2}\hat{\beta}_{\mathrm{LSE}}\in\mathbb{R}^p$
where $\hat{\beta}_{\mathrm{LSE}}=(Z^\T Z)^{-1}Z^\T y$.

Let
\begin{equation*}
 Q^\T y=(\sqrt{m}\bar{y}, x^\T, u^\T)^\T
\end{equation*}
where $u=W^\T y\in\mathbb{R}^{m-p-1}$.
Then $ (\sqrt{m}\bar{y}, x, u) $ are sufficient and
the joint density of $ (\sqrt{m}\bar{y}, x, u) $ 
is
\begin{equation*}
 \eta^{m/2}\tilde{f}(\eta\{m(\bar{y}-\alpha)^2+\|x-\theta\|^2+\|u\|^2\})
\end{equation*}
where $\theta=(Z^\T Z)^{1/2}\beta$.
Further the marginal density of $ (x, u) $ is
\begin{equation*}
 \eta^{(m-1)/2}f(\eta\{\|x-\theta\|^2+\|u\|^2\}),
\end{equation*}
which we are considering in this paper, where $m-1=p+n$ and 
\begin{equation*}
 f(t)=\int_{-\infty}^\infty \tilde{f}(v^2+t)\rd v.
\end{equation*}
Note that the loss function $\eta\|\delta-\theta\|^2$ corresponds to
so-called ``predictive loss'' $\eta\|Z\hat{\beta}-Z\beta\|^2$ for estimation of
the regression coefficient vector $\beta$.

In the equivariant estimator $\delta_\psi$ of $\theta$
\begin{equation*}
 \left\{1-\psi(\|x\|^2/s)\right\}x,
\end{equation*}
$\|x\|^2/s$ is $R^2/(1-R^2)$ in the regression context where $R^2$ is the coefficient of
determination. 
It is natural to make use of $R^2$ for shrinkage since small $R^2$
corresponds to less reliability of the least squares estimator of $\beta$.
We note that the corresponding ``simple Bayes estimator'' for regression coefficient $\beta$
 is rewritten as
\begin{equation*}
 \left(1-\frac{a}{(a+1)(b+1)+R^2/(1-R^2)}\right)\hat{\beta}_{\mathrm{LSE}}
\end{equation*}
and has a shrinkage factor which is increasing in $R^2$.

In the equivariant estimator $\delta_\xi=\left\{1-\xi(x/\sqrt{s})\right\}x\in \mathcal{D}_\xi$,
\begin{equation}
 \frac{x}{\sqrt{s}}=\frac{(Z^\T Z)^{1/2}\hat{\beta}_{\mathrm{LSE}}}{\sqrt{m-p-1}\hat{\sigma}}
  =\frac{(Z^\T Z)^{1/2}}{\sqrt{m-p-1}}\frac{\hat{\beta}_{\mathrm{LSE}}}{\hat{\sigma}}
\end{equation}
where $\hat{\sigma}=\sqrt{s/(m-p-1)}$ and $ \hat{\beta}_{\mathrm{LSE}}/\hat{\sigma}$ is
a vector of the $t$-values.

Hence the restriction to $\mathcal{D}_\psi $ or $\mathcal{D}_\xi$
is quite natural in regression context. 
The minimaxity and admissibility results of Sections \ref{sec:equiv} and \ref{sec:equiv.blyth} provide some
guidance as to reasonable shrinkage estimators in the regression context.

\section{Concluding remarks}
\label{sec.cr}
We have established admissibility of certain generalized Bayes estimators
within the class of equivariant estimators, of the mean vector for a spherically
symmetric distribution with unknown scale under invariant loss.
In some cases, we establish simultaneous minimaxity and, equivariant admissibility
 for broader classes of sampling distributions. In the Gaussian case we establish
 admissibility within the equivariant estimators
 of a class of generalized Bayes minimax estimators of a particularly simple form.
 We have also investigated similar issues in the setting of a general linear regression
 model with intercept and spherically symmetric error distribution. In this setting,
 the shrinkage factor of equivariant estimators of the regression coefficients depends
 on the coefficient of determination.
 
\appendix
\section{Proof of Theorem \ref{thm:stein.1956}}
\label{sec:ap.stein.1956}
 Suppose the estimator $\delta_\xi(X,S)\in\mathcal{D}_\xi$
is strictly better than the estimator $\delta_\psi\in\mathcal{D}_\psi$,  that is,
\begin{align}\label{xi_psi_ineq}
 E\left[\eta\left\|\delta_\xi(X,S)-\theta\right\|^2\right]
 \leq  E\left[\eta\left\|\delta_\psi(X,S)-\theta\right\|^2\right]
\end{align}
 for all $\eta^{1/2}\theta\in\mathbb{R}^p$ with strict inequality for some value.
Because of the continuity of $\delta_\xi(X,S)$ and $\delta_\psi(X,S)$,
 strict inequality will hold for
  $\eta^{1/2}\theta\in\mathbb{R}^p$ in some nonempty open set
$U\subset\mathbb{R}^p$.
The inequality \eqref{xi_psi_ineq} will remain true
if $\delta_\xi(X,S)$ is replaced by $ \Gamma\delta_\xi(\Gamma^{-1}X,S)$
with $\Gamma$ orthogonal, since
\begin{align*}
 E\left[\eta\left\|\Gamma\delta_\xi(\Gamma^{-1}X,S)-\theta\right\|^2\right] 
= E\left[\eta\left\|\delta_\xi(\Gamma^{-1}X,S)-\Gamma^{-1}\theta\right\|^2\right].
\end{align*}
 Thus, for fixed  $\eta^{1/2}\theta\in U\subset\mathbb{R}^p$,
the set of $\Gamma$ for which
\begin{align*}
 E\left[\eta\left\|\Gamma\delta_\xi(\Gamma^{-1}X,S)-\theta\right\|^2\right] 
<E\left[\eta\left\|\delta_\psi(X,S)-\theta\right\|^2\right]
\end{align*}
will be a nonempty open set.
Let $\mu$ be the invariant probability measure on $\mathcal{O}(p)$
which assigns strictly positive measure to any nonempty open set
(for the existence of such a measure, see Chapter 2 of \cite{Weil-1940}).
Then the weighted estimator
\begin{align*}
\delta_{\xi\star}= \int_{\mathcal{O}(p)} \Gamma\delta_\xi(\Gamma^{-1}X,S)\rd \mu(\Gamma)
\end{align*}
 is a member of the class $\mathcal{D}_\psi$,
 and because of the convexity of the loss function in $\delta$, we have 
\begin{align*}
 E\left[\eta\left\|\delta_{\xi\star}(X,S)-\theta\right\|^2\right] 
 &\leq \int E\left[\eta\left\|\Gamma\delta_\xi(\Gamma^{-1}X,S)-\theta\right\|^2\right]\rd \mu(\Gamma) \\
 &\leq E\left[\eta\left\|\delta_\psi(X,S)-\theta\right\|^2\right]
\end{align*}
with strict inequality for $\eta^{1/2}\theta\in U$.
This implies that $\delta_\psi(X,S)$ is not admissible among $\mathcal{D}_\psi$ as assumed 
and hence completes the proof.

\section{Proof of Theorem \ref{thm:proper.admissible}}
\label{sec:ap.proper.admissible}

[Parts \ref{thm:proper.admissible.0} and \ref{thm:proper.admissible.1}]\mbox{}
The Bayes equivariant risk given by \eqref{eq:eBrisk} is rewritten as
\begin{equation}\label{eq:eBrisk.1}
\begin{split}
B(\delta_\psi, \pi)&=\int_{\mathbb{R}^p} \tilde{R}(\|\mu\|^2,\delta_{\psi})\bar{\pi}(\|\mu\|^2) \rd \mu \\
 &=\int_{\mathbb{R}^p} \tilde{R}(\eta\|\theta\|^2,\delta_{\psi})\eta^{p/2}\bar{\pi}(\eta\|\theta\|^2) \rd \theta \\
 &=\int_{\mathbb{R}^p} R(\theta,\eta,\delta_{\psi})\eta^{p/2}\bar{\pi}(\eta\|\theta\|^2) \rd \theta,
\end{split}
\end{equation}
where the third equality follows from \eqref{eq:thetaeta}.
Further $B(\delta_\psi, \pi) $ given by \eqref{eq:eBrisk.1} is expanded as
\begin{equation}\label{eq:expand.Brisk}
 \begin{split}
 B(\delta_\psi, \pi)
 &=\int_{\mathbb{R}^p} E\left[\eta\|X\|^2\psi^2(\|X\|^2/S)\right]\eta^{p/2}\bar{\pi}(\eta\|\theta\|^2) \rd \theta \\
 &\qquad -2\int_{\mathbb{R}^p} E\left[\eta\|X\|^2\psi(\|X\|^2/S)\right]
 \eta^{p/2}\bar{\pi}(\eta\|\theta\|^2) \rd \theta \\
 &\qquad +2\int_{\mathbb{R}^p} E\left[\eta\psi(\|X\|^2/S)X^\T\theta\right]
 \eta^{p/2}\bar{\pi}(\eta\|\theta\|^2) \rd \theta \\
&\qquad + \int_{\mathbb{R}^p}E\left[\eta\|X-\theta\|^2\right]\eta^{p/2}\bar{\pi}(\eta\|\theta\|^2) \rd \theta. 
\end{split}
\end{equation}
Note that, by \eqref{assmp.f.1} and the propriety of the prior given by \eqref{eq:propriety.0},
the third term is equal to $p$, that is,
\begin{equation}\label{eq:equiv.third.term}
\int_{\mathbb{R}^p}E\left[\eta\|X-\theta\|^2\right]\eta^{p/2}\bar{\pi}(\eta\|\theta\|^2) \rd \theta 
=\int_{\mathbb{R}^p}p\bar{\pi}(\|\mu\|^2) \rd \mu   =p.
\end{equation}

The first and second terms of \eqref{eq:expand.Brisk} with $ \psi^j(\|X\|^2/S)$ for $j=2,1$ respectively, are rewritten as
\begin{align}
 & \int_{\mathbb{R}^p} E\left[\eta\|X\|^2\psi^j(\|X\|^2/S)\right]
 \eta^{p/2}\bar{\pi}(\eta\|\theta\|^2) \rd \theta \label{eq:equiv.first.term} \\
 &= c_n \iiint \eta\|x\|^2\psi^j(\|x\|^2/s)
 \eta^{(2p+n)/2}s^{n/2-1}f(\eta\{\|x-\theta\|^2+s\}) \notag\\ &\qquad \times 
 \bar{\pi}(\eta\|\theta\|^2) \rd \theta  \rd x  \rd s \notag\\
 &= c_n \iiint \eta s\|z\|^2\psi^j(\|z\|^2) 
 \eta^{(2p+n)/2}s^{(p+n)/2-1}f(\eta\{\|\sqrt{s}z-\theta\|^2+s\}) \notag\\ &\qquad \times 
\bar{\pi}(\eta\|\theta\|^2) \rd \theta  \rd z  \rd s \quad (z=x/\sqrt{s}, \ J=s^{p/2})\notag\\
 &= c_n \iiint \eta s\|z\|^2\psi^j(\|z\|^2) 
 \eta^{(2p+n)/2}s^{(2p+n)/2-1}f(s\eta\{\|z-\theta_*\|^2+1\}) \notag\\ &\qquad \times 
 \bar{\pi}(\eta s\|\theta_*\|^2) \rd \theta_*  \rd z  \rd s \quad (\theta_*=\theta/\sqrt{s},
 \ J=s^{p/2})\notag\\
 &= c_n \iiint \|z\|^2\psi^j(\|z\|^2) 
 \eta_*^{(2p+n)/2}f(\eta_*\{\|z-\theta_*\|^2+1\}) \notag \\ &\qquad \times 
 \bar{\pi}(\eta_*\|\theta_*\|^2) \rd \theta_*  \rd z  \rd \eta_* \quad (\eta_*=\eta s, \ J=1/\eta)\notag\\
 &= c_n\int_{\mathbb{R}^p} \|z\|^2\psi^j(\|z\|^2) M_1(z,\pi) \rd z, \notag
\end{align}
where $c_n$ is given by \eqref{eq:c_n}, $z=x/\sqrt{s}$, $J$ is the Jacobian, and
\begin{equation}\label{eq:def:M_1}
 M_1(z,\pi)=
 \iint \eta^{(2p+n)/2}f(\eta\{\|z-\theta\|^2+1\})
\bar{\pi}(\eta\|\theta\|^2) \rd \theta   \rd \eta .
\end{equation}

Similarly, the third term of \eqref{eq:expand.Brisk} is rewritten as
\begin{align}
 & \int_{\mathbb{R}^p} E\left[\eta\psi(\|X\|^2/S)X^\T\theta\right]\eta^{p/2}\bar{\pi}(\eta\|\theta\|^2) \rd \theta \label{eq:equiv.second.term}\\
 &= c_n\iiint \eta\psi^2(\|x\|^2/s)x^\T\theta
  \eta^{(2p+n)/2}s^{n/2-1}f(\eta\{\|x-\theta\|^2+s\}) \notag\\
&\qquad\times \bar{\pi}(\eta\|\theta\|^2) \rd \theta  \rd x  \rd s \notag\\
 &= c_n\iiint \eta \psi(\|z\|^2) \sqrt{s}z^\T\theta 
  \eta^{(2p+n)/2}s^{(p+n)/2-1}f(\eta\{\|\sqrt{s}z-\theta\|^2+s\}) \notag\\ &\qquad \times 
\bar{\pi}(\eta\|\theta\|^2) \rd \theta  \rd z  \rd s \quad (z=x/\sqrt{s}, \ J=s^{p/2})\notag\\
 &= c_n\iiint \eta s\psi(\|z\|^2) z^\T\theta_*
 \eta^{(2p+n)/2}s^{(2p+n)/2-1}f(s\eta\{\|z-\theta_*\|^2+1\}) \notag\\ &\qquad \times 
 \bar{\pi}(\eta s \|\theta_*\|^2) \rd \theta_*  \rd z  \rd s
 \quad (\theta_*=\theta/\sqrt{s}, \ J=s^{p/2})\notag\\
 &= c_n\iiint \psi(\|z\|^2)z^\T\theta_* 
  \eta_*^{(2p+n)/2}f(\eta_*\{\|z-\theta_*\|^2+1\})  \notag\\ &\qquad \times 
\bar{\pi}(\eta_*\|\theta_*\|^2) \rd \theta_*  \rd z  \rd \eta_* \quad (\eta_*=\eta s, \ J=1/\eta) \notag\\
 &= c_n\int_{\mathbb{R}^p} \psi(\|z\|^2)z^\T M_2(z,\pi) \rd z,\notag
\end{align}
where
\begin{equation}\label{eq:def:M_2}
 M_2(z,\pi)=
\iint \theta
  \eta^{(2p+n)/2} f(\eta\{\|z-\theta\|^2+1\}) \bar{\pi}(\eta\|\theta\|^2) \rd \theta  \rd \eta.
\end{equation}
Hence, by \eqref{eq:equiv.third.term}, \eqref{eq:equiv.first.term} and \eqref{eq:equiv.second.term},
 we have
\begin{equation}\label{eq:Bayesequivrisk}
 \begin{split}
 B(\delta_\psi, \pi) &=
 c_n\int_{\mathbb{R}^p} \left\{\psi^2(\|z\|^2) \|z\|^2 M_1(z,\pi) \right. \\
&\qquad \left. -2\psi(\|z\|^2)\{\|z\|^2 M_1(z,\pi)-z^\T M_2(z,\pi)\}\right\} \rd z +p. 
\end{split}
\end{equation}
Then the Bayes equivariant solution or minimizer of $ B(\delta_\psi, \pi)$ is
\begin{equation}\label{eq:eq_Bayes_sol}
 \psi_\pi(\|z\|^2)=\argmin_\psi\left(B(\delta_\psi, \pi)
 \right) 
 =1-\frac{z^\T M_2(z,\pi)}{\|z\|^2 M_1(z,\pi)}
\end{equation}
and hence the corresponding Bayes equivariant estimator is
\begin{align}\label{eq:delta_pi}
\delta_\pi= \frac{Z^\T M_2(Z,\pi)}{\|Z\|^2 M_1(Z,\pi)}X,
\end{align}
where $Z=X/\sqrt{S}$.

[Part \ref{thm:proper.admissible.2}]\mbox{}
The generalized Bayes estimator of $\theta$ with respect to the density on $(\theta,\eta)$,
\begin{equation*}
 \eta^\nu\eta^{p/2}g(\eta\|\theta\|^2)
\end{equation*}
is given by
\begin{align}
 \delta_{g,\nu}&
=\frac{E[\eta\theta\mid x,s]}{E[\eta\mid x,s]} \notag\\
& =\frac{\iint \eta\theta c_n\eta^{(p+n)/2}s^{n/2-1}f(\eta\{\|x-\theta\|^2+s\})\eta^\nu\eta^{p/2}g(\eta\|\theta\|^2) \rd \theta  \rd \eta}
 {\iint \eta c_n\eta^{(p+n)/2}s^{n/2-1}f(\eta\{\|x-\theta\|^2+s\})\eta^\nu\eta^{p/2}g(\eta\|\theta\|^2) \rd \theta  \rd \eta} \notag \\
 &=\frac{\iint \theta \eta^{(2p+n)/2+\nu+1}f(\eta\{\|x-\theta\|^2+s\})g(\eta\|\theta\|^2) \rd \theta  \rd \eta}
 {\iint  \eta^{(2p+n)/2+\nu+1}f(\eta\{\|x-\theta\|^2+s\})g(\eta\|\theta\|^2) \rd \theta  \rd \eta}.
 \label{eq:ap.gbgnu}
\end{align}
By change of variables $\theta_*=\theta/\sqrt{s}$ and $\eta_*=s\eta$, we have
 \begin{align*}
  \delta_{g,\nu} =\sqrt{s}\frac{\iint \theta_* \eta_*^{(2p+n)/2+\nu+1}f(\eta_*\{\|x/\sqrt{s}-\theta_*\|^2+1\})
  g(\eta_*\|\theta_*\|^2) \rd \theta_*  \rd \eta_*}
 {\iint \eta_*^{(2p+n)/2+\nu+1}f(\eta_*\{\|x/\sqrt{s}-\theta_*\|^2+1\})g(\eta_*\|\theta_*\|^2) \rd \theta_*  \rd \eta_*}. 
\end{align*}
Comparing $\delta_{g,\nu}$ with $\delta_\pi$ given by \eqref{eq:delta_pi},
we see that $\delta_{g,\nu}$ with $\nu=-1$ is 
\begin{equation*}
\delta_{g,-1}  =\sqrt{s}\frac{M_2(z,g)}{M_1(z,g)} 
=\sqrt{s}\frac{z z^\T M_2(z,g)}{\|z\|^2 M_1(z,g)} 
 =\frac{z^\T M_2(z,g)}{\|z\|^2 M_1(z,g)}x.
\end{equation*}
The second equality follows since $M_2(z,g)$ is proportional to $z$ and
the length of $ M_2(z,g)$ is $z^\T M_2(z,g)/\|z\|$.

[Part \ref{thm:proper.admissible.3}]\mbox{}
Since the  quadratic loss function is strictly convex, the Bayes solution is unique,
and hence admissibility within $\mathcal{D}_\psi$ follows.

\section{Proof that $\delta_{g,\nu}\in\mathcal{D}_\psi$}
\label{sec:ap.nu.neq.-1}
As in \eqref{eq:ap.gbgnu},
the generalized Bayes estimator of $\theta$ with respect to 
$ \eta^\nu\eta^{p/2}g(\eta\|\theta\|^2)$ is given by
\begin{align*}
  \delta_{g,\nu}(x,s)=\frac{\iint \theta \eta^{(2p+n)/2+\nu+1}f(\eta\{\|x-\theta\|^2+s\})g(\eta\|\theta\|^2) \rd \theta  \rd \eta}
 {\iint  \eta^{(2p+n)/2+\nu+1}f(\eta\{\|x-\theta\|^2+s\})g(\eta\|\theta\|^2) \rd \theta  \rd \eta}.
\end{align*}
The estimator $  \delta_{g,\nu}(x,s)$ with $x=\gamma\Gamma x$ and $s=\gamma^2s$ is
\begin{align*}
 \delta_{g,\nu}(\gamma\Gamma x,\gamma^2s)
 =
\frac{\iint \theta \eta^{(2p+n)/2+\nu+1}f(\eta\{\|\gamma\Gamma x-\theta\|^2+\gamma^2 s\})g(\eta\|\theta\|^2) \rd \theta  \rd \eta}
 {\iint  \eta^{(2p+n)/2+\nu+1}f(\eta\{\|\gamma\Gamma x-\theta\|^2+\gamma^2 s\})g(\eta\|\theta\|^2) \rd \theta  \rd \eta}
\end{align*}
and, by change of variables $\theta=\gamma\Gamma \theta_*$ and $\eta_*=\gamma^2\eta$,
is rewritten as
\begin{align*}
 \delta_{g,\nu}(\gamma\Gamma x,\gamma^2s)
&=\gamma\Gamma\frac{\iint \theta_* \eta_*^{(p+n)/2+\nu+1}
 f(\eta_*\{\| x-\theta_*\|^2+ s\})g(\eta_*\|\theta_*\|^2) \rd \theta_*  \rd \eta_*}
 {\iint \eta_*^{(p+n)/2+\nu+1}f(\eta_*\{\| x-\theta_*\|^2+ s\})
 g(\eta_*\|\theta_*\|^2) \rd \theta_*  \rd \eta_*} \\
 &=\gamma\Gamma  \delta_{g,\nu}(x,s).
\end{align*}
Hence $\delta_{g,\nu}\in\mathcal{D}_\psi$.

\section{Proof of Theorem \ref{blyth-method}}
\label{sec:ap.blyth-method}
 Suppose that $ \delta_\pi\in\mathcal{D}_\psi$ is inadmissible among the class $\mathcal{D}_\psi$
 and hence  $\delta'\in\mathcal{D}_\psi$ satisfies 
$ \tilde{R}(\lambda,\delta') \leq \tilde{R}(\lambda,\delta_\pi)$ for all $\lambda$
with strict inequality for some $\lambda$.
 Let $\delta''=(\delta_\pi+\delta')/2$. Clearly $\delta''$ is also a member of
 $\mathcal{D}_\psi$.
Then, using Jensen's inequality, we have
\begin{align*}
 \tilde{R}(\lambda,\delta'')
&=E\left[\eta\|\delta''-\theta\|^2\right] \\
&<(1/2)E\left[\eta\|\delta'-\theta\|^2\right] +(1/2)E\left[\eta\|\delta_\pi -\theta\|^2\right] \\
 &= \frac{1}{2}\left\{\tilde{R}(\lambda,\delta')+\tilde{R}(\lambda,\delta_\pi)\right\} \\
 &\leq  \tilde{R}(\lambda,\delta_\pi),
\end{align*}
for any $\lambda$. 
Since $  \tilde{R}(\lambda,\delta'')$ and $  \tilde{R}(\lambda,\delta_\pi)$ are both continuous
 functions of $\lambda$, there exists an $ \epsilon>0$ such that
$ \tilde{R}(\lambda,\delta'')< \tilde{R}(\lambda,\delta_\pi)-\epsilon$ for 
$ 0\leq \lambda\leq 1$. 
Then 
\begin{align*}
 \ndiff(\delta_\pi,\delta_{\pi i};\pi_i)
 &= \int_0^\infty \left\{\tilde{R}(\lambda,\delta_\pi)-\tilde{R}(\lambda,\delta_{\pi i})\right\}
 \pi_i(\lambda)\rd \lambda \\
 &\geq  \int_0^\infty
\left\{\tilde{R}(\lambda,\delta_\pi)-\tilde{R}(\lambda,\delta'')\right\}\pi_i(\lambda)\rd \lambda \\
 & \geq \int_0^1
\left\{\tilde{R}(\lambda,\delta_\pi)-\tilde{R}(\lambda,\delta'')\right\}\pi_1(\lambda)\rd \lambda \\
& \geq \epsilon \gamma>0,
\end{align*}
which contradicts $\ndiff(\delta_\pi,\delta_{\pi i};\pi_i) \to 0$ as $i\to\infty$.

\section{Preliminary results on $\pi$, $\pi_i$ and $f$}
\label{sec:assumption}
  \subsection{Preliminary results on $\pi$}
\begin{lemma}\label{lem:pi}
 \begin{enumerate}
  \item \label{lem:pi:1}
	Under Assumptions \ref{AA1}--\ref{AA3},
\begin{align*}
 \sup_{\lambda\in\mathbb{R}_+}\lambda\frac{|\pi'(\lambda)|}{\pi(\lambda)}
\end{align*}
is bounded.
  \item \label{lem:pi:2}
	Under Assumption \ref{AA2}, $\dps\int_0^1\frac{\pi(\lambda)}{\lambda^{1/2}}\rd \lambda<\infty$.
  \item \label{lem:pi:3}
	Under Assumption \ref{AA2} with $\alpha>0$, $\dps  \int_0^1\frac{\pi(\lambda)}{\lambda}\rd \lambda<\infty$.
  \item \label{lem:pi:4}
	Under Assumption \ref{AA3.1},
	$\dps \int_1^\infty \frac{\pi(\lambda)}{\lambda}\rd \lambda<\infty$.
  \item \label{lem:pi:5}
	Under Assumption \ref{AA3}, $\dps  \int_1^\infty \frac{\pi(\lambda)}{\lambda^2}\rd \lambda<\infty$.
  \item \label{lem:pi:6}
	If $\lim_{\lambda\to\infty}\lambda\pi'(\lambda)/\pi(\lambda)<-1$,
	$\dps \int_1^\infty \pi(\lambda)\rd \lambda<\infty$.
  \item \label{lem:pi:7}
	Under Assumption \ref{AA3}, there exist $\epsilon\in(0,1)$ and $\lambda_*>\exp(1) $ such that
$\pi(\lambda)/\{\log \lambda\}^{1-\epsilon}$
for $\lambda\geq \lambda_*$ is bounded from above.
  \item \label{lem:pi:8} Under Assumption \ref{AA3.2},
	$\dps \int_1^\infty \frac{\pi(\lambda)}{\lambda}\kappa^2(\lambda)\rd \lambda<\infty$.
 \end{enumerate}
\end{lemma}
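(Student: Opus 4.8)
The plan is to derive all eight claims from the single identity $\frac{d}{d\lambda}\log\pi(\lambda)=\kappa(\lambda)/\lambda$, which converts each hypothesis on $\kappa$ into a growth bound on $\pi$ by integration; the claims then split into near-origin estimates (Parts \ref{lem:pi:2}, \ref{lem:pi:3}), boundedness of $\kappa$ (Part \ref{lem:pi:1}), and tail estimates (Parts \ref{lem:pi:4}--\ref{lem:pi:8}). For the origin I would insert the representation $\pi(\lambda)=\lambda^\alpha\nu(\lambda)$ of Assumption \ref{AA2}; since $\pi$ is continuous by Assumption \ref{AA1} and $0<\nu(0)<\infty$, the factor $\nu$ is continuous and hence bounded on $[0,1]$. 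Then $\pi(\lambda)/\lambda^{1/2}\le C\lambda^{\alpha-1/2}$ with $\alpha-1/2>-1$ yields Part \ref{lem:pi:2}, while $\pi(\lambda)/\lambda\le C\lambda^{\alpha-1}$ with $\alpha-1>-1$ (when $\alpha>0$) yields Part \ref{lem:pi:3}.

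For Part \ref{lem:pi:1}, I would check that $|\kappa|$ admits finite limits at both endpoints. Near zero, differentiating the representation gives $\kappa(\lambda)=\alpha+\lambda\nu'(\lambda)/\nu(\lambda)\to\alpha$, using $\lim_{\lambda\to0}\lambda\nu'(\lambda)=0$ and $\nu(0)>0$; at infinity $\kappa$ converges under either \ref{AA3.1} or \ref{AA3.2}. Continuity of $\kappa$ on $(0,\infty)$ together with these two finite limits then forces boundedness on all of $\mathbb{R}_+$.

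The polynomial-tail estimates follow by integrating the log-derivative: for large $\lambda\ge\lambda_0$ one has $\log\pi(\lambda)=\log\pi(\lambda_0)+\int_{\lambda_0}^\lambda \kappa(t)/t\,dt$. Writing $L=\lim_{\lambda\to\infty}\kappa(\lambda)$, under \ref{AA3.1} one eventually has $\kappa(t)\le L/2<0$, so $\pi(\lambda)\le C\lambda^{L/2}$, which proves Part \ref{lem:pi:4} and the \ref{AA3.1}-case of Part \ref{lem:pi:5}; under the stronger hypothesis of Part \ref{lem:pi:6} the same computation gives $\pi(\lambda)\le C\lambda^{s}$ with some $s<-1$, hence integrability of $\pi$ itself.

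The main obstacle is the logarithmic bound of Part \ref{lem:pi:7} in the slowly varying regime \ref{AA3.2}, on which the \ref{AA3.2}-cases of Parts \ref{lem:pi:5} and \ref{lem:pi:8} rest. Under \ref{AA3.2.2} I would use $|\kappa(t)|\le c/\log t$ with $c<1$ for large $t$, so that $\int_{\lambda_0}^\lambda\kappa(t)/t\,dt\le c\log\log\lambda+C$ and hence $\pi(\lambda)\le C\{\log\lambda\}^{c}$; taking $\epsilon=1-c>0$ establishes Part \ref{lem:pi:7}, after which Part \ref{lem:pi:5} follows because $\{\log\lambda\}^{1-\epsilon}/\lambda^2$ is integrable, and under \ref{AA3.2.1} the eventual negativity of $\kappa$ makes $\pi$ decreasing hence bounded, giving Part \ref{lem:pi:7} at once. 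For Part \ref{lem:pi:8} I would use $\pi\kappa/\lambda=\pi'$ to write $\int_1^\infty \pi\kappa^2/\lambda\,d\lambda=\int_1^\infty\kappa\pi'\,d\lambda$: under \ref{AA3.2.2} the integrand is bounded by $C/(\lambda\{\log\lambda\}^{1+\epsilon})$ upon combining Part \ref{lem:pi:7} with $\kappa^2\le c^2/\{\log\lambda\}^2$, while under \ref{AA3.2.1} the relation $\kappa<0$ eventually makes $\pi$ monotone decreasing and bounded below by $0$, so $\int^\infty|\pi'|\,d\lambda<\infty$; since $|\kappa|$ is bounded by Part \ref{lem:pi:1}, the integral converges absolutely.
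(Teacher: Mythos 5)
Your proposal is correct and follows essentially the same route as the paper: the near-origin parts via the representation $\pi(\lambda)=\lambda^{\alpha}\nu(\lambda)$ with $\nu$ bounded on $[0,1]$, and the tail parts by integrating $\lambda\pi'/\pi=\kappa(\lambda)$ to obtain the polynomial envelope $\pi(\lambda)\leq C\lambda^{-\epsilon}$ under \ref{AA3.1} and the poly-logarithmic envelope $\pi(\lambda)\leq C\{\log\lambda\}^{1-\epsilon}$ under \ref{AA3.2.2}, exactly as in the paper's Parts \ref{lem:pi:4} and \ref{lem:pi:7}. The one local deviation is Part \ref{lem:pi:8} under \ref{AA3.2.1}, where you replace the paper's explicit representation $\pi(\lambda)=\pi(\lambda_3)\exp(-\int_{\lambda_3}^{\lambda}|\kappa(s)|s^{-1}\rd s)$ and the monotonicity of $|\kappa|$ by the simpler observation that $\pi\kappa^2/\lambda=\kappa\pi'$ with $\pi$ eventually monotone decreasing and $|\kappa|$ bounded; this is equally valid and marginally more elementary.
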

 \begin{proof}\mbox{}
[Part \ref{lem:pi:1}] This follows from Assumptions \ref{AA1}--\ref{AA3}
  in a straightforward way.
  
[Part \ref{lem:pi:2}] We have
\begin{equation}\label{eq:lem:pi:2.1}
\int_0^1\frac{\pi(\lambda)}{\lambda^{1/2}}\rd \lambda\leq
 \sup_{\lambda\in[0,1]}\nu(\lambda)\int_0^1\lambda^{1/2+\alpha-1}\rd \lambda
 =\sup_{\lambda\in[0,1]}\nu(\lambda)\frac{1}{1/2+\alpha}<\infty.
\end{equation}

  [Part \ref{lem:pi:3}] 
  As in \eqref{eq:lem:pi:2.1} of Part \ref{lem:pi:2}, we have
 \begin{equation*}
  \int_0^1\frac{\pi(\lambda)}{\lambda}\rd \lambda\leq\sup_{\lambda\in[0,1]}\nu(\lambda)\frac{1}{\alpha}<\infty.
 \end{equation*}
[Part \ref{lem:pi:4}]
 By Assumption \ref{AA3.1}, there exist $\epsilon>0$ and $ \lambda_1>0$ such that
\begin{align*}
 \lambda\frac{\pi'(\lambda)}{\pi(\lambda)}\leq -\epsilon,
\end{align*}
for all $\lambda\geq \lambda_1$ and hence
we have
\begin{equation}\label{eq:korene.0}
 \int_{\lambda_1}^\lambda \frac{\pi'(s)}{\pi(s)}\rd s \leq -\epsilon \int_{\lambda_1}^\lambda \frac{1}{s}\rd s
\, \Leftrightarrow \,
  \log \frac{\pi(\lambda)}{\pi(\lambda_1)}
 \leq  -\epsilon\log\frac{\lambda}{\lambda_1}
\end{equation}
for $\lambda\geq \lambda_1$, which implies that
\begin{equation}\label{eq:korene}
 \pi(\lambda)\leq \frac{\pi(\lambda_1)}{\lambda_1^{-\epsilon}}\lambda^{-\epsilon}\text{ for all }\lambda\geq \lambda_1.
\end{equation}
Hence we have
\begin{equation}\label{show.proper.0}
 \int_{\lambda_1}^\infty\frac{\pi(\lambda)}{\lambda}\rd \lambda
\leq \frac{\pi(\lambda_1)}{\lambda_1^{-\epsilon}}\int_{\lambda_1}^\infty \frac{\rd\lambda }{\lambda^{1+\epsilon}}
= \frac{\pi(\lambda_1)}{\epsilon}<\infty.
\end{equation}

[Parts \ref{lem:pi:5} and \ref{lem:pi:6}] 
The proof is omitted since it is similar to that of Part \ref{lem:pi:4}.

[Part \ref{lem:pi:7}] 
  Under Assumptions \ref{AA3.1},
  by \eqref{eq:korene}, $\pi(\lambda)\to 0$ as $\lambda\to \infty$.
 Under Assumption \ref{AA3.2.1}, it is clear that $\pi(\lambda)$ is bounded.
  Under Assumption \ref{AA3.2.2}, there exist $\epsilon\in(0,1)$ and
  $\lambda_2>\exp(1)$ such that
\begin{equation}\label{eq:pi_log.0}
 \lambda\frac{\pi'(\lambda)}{\pi(\lambda)}\leq \frac{1-\epsilon}{\log \lambda}
\end{equation}
  for all $\lambda\geq \lambda_2$. 
As in \eqref{eq:korene.0} and \eqref{eq:korene}, we have
\begin{gather*}
 \int_{\lambda_2}^\lambda \frac{\pi'(s)}{\pi(s)}\rd s\leq 
(1-\epsilon)\int_{\lambda_2}^\lambda\frac{\rd s}{s\log s} \\
 \Leftrightarrow \log \frac{\pi(\lambda)}{\pi(\lambda_2)}
 \leq (1-\epsilon)\left\{\log\log \lambda-\log\log \lambda_2\right\}
\end{gather*}
  and hence
  \begin{equation}\label{eq:pi_log}
\pi(\lambda)\leq \pi(\lambda_2)\{\log \lambda\}^{1-\epsilon}\text{ for all }\lambda\geq \lambda_2,
  \end{equation}
  which completes the proof.
  
  [Part \ref{lem:pi:8}] 
  Under Assumption \ref{AA3.2.1}, there exists $\lambda_3>0$ such that
  $|\kappa(\lambda)|$ for $\lambda\geq \lambda_3$ is monotone decreasing. 
Then  $\pi(\lambda)$ for $\lambda\geq \lambda_3$ is expressed by
  \begin{align*}
   \pi(\lambda)=\pi(\lambda_3)\exp\left(-\int_{\lambda_3}^\lambda \frac{|\kappa(s)|}{s}\rd s\right)
  \end{align*}
  and
  \begin{align*}
 \int_{\lambda_3}^\infty
 \frac{\pi(\lambda)}{\lambda}\kappa^2(\lambda)\rd \lambda 
 &=\pi(\lambda_3) \int_{\lambda_3}^\infty \frac{\{\kappa(\lambda)\}^2}{\lambda}
 \exp\left(-\int_{\lambda_3}^\lambda \frac{|\kappa(s)|}{s}\rd s\right)\rd \lambda \\
 &\leq \pi(\lambda_3)|\kappa(\lambda_3)| \int_{\lambda_3}^\infty \frac{|\kappa(\lambda)|}{\lambda}
 \exp\left(-\int_{\lambda_3}^\lambda \frac{|\kappa(s)|}{s}\rd s\right)\rd \lambda \\
 &= \pi(\lambda_3)|\kappa(\lambda_3)|
\left[-\exp\left(-\int_{\lambda_3}^\lambda \frac{|\kappa(s)|}{s}\rd s\right)\right]_{\lambda_3}^\infty \\
 &\leq \pi(\lambda_3)|\kappa(\lambda_3)|<\infty.
  \end{align*}
  Under Assumption \ref{AA3.2.2}, by \eqref{eq:pi_log}, we have
  \begin{align*}
 \int_{\lambda_2}^\infty
 \frac{\pi(\lambda)}{\lambda}\kappa^2(\lambda)\rd \lambda 
\leq 
   \int_{\lambda_2}^\infty \frac{\pi(\lambda_2)}{\lambda \{\log \lambda\}^{1+\epsilon}}\rd \lambda 
=\frac{\pi(\lambda_2)}{\epsilon\{\log \lambda_2\}^{\epsilon}}<\infty,
  \end{align*}
which completes the proof.
 \end{proof}

\begin{remark}
 By Parts  \ref{lem:pi:2} and \ref{lem:pi:6} of Lemma \ref{lem:pi},
if $\lim_{\lambda\to\infty}\lambda\pi'(\lambda)/\pi(\lambda)<-1$,
the prior $\pi(\lambda)$ with Assumption \ref{AA2}
is proper and hence Part \ref{thm:proper.admissible.3} of Theorem \ref{thm:proper.admissible} can be applied. 
And this is why we assume $\lim_{\lambda\to\infty}\lambda\pi'(\lambda)/\pi(\lambda)\geq -1$
as the asymptotic behavior in Assumption \ref{AA3}.
\end{remark}

\subsection{The sequence $\pi_i$}
\label{sec:assumption_h}
The function $h_i(\lambda)$ in \eqref{eq:new_hir}
 satisfies the following.
 \begin{lemma}\label{lem:h_i}
  \begin{enumerate}
   \item \label{lem:h_i.1}
$h_i(\lambda)$ is increasing in $i$ for fixed $\lambda$, and decreasing in $\lambda$ for fixed $i$. Further
$\lim_{i\to\infty}h_i(\lambda)=1$ for fixed $\lambda\geq 0$.
   \item \label{lem:h_i.2}
	 For fixed $i$,
\begin{align*}
 \lim_{\lambda\to\infty} \{(\lambda+e+i) \log (\lambda+e+i) \log\log (\lambda+e+i)\} h_i(\lambda)=i.
\end{align*}
   \item \label{lem:h_i.4}
	 For $\lambda\geq 0$,
\begin{align*}
\sup_i |h'_i(\lambda)| \leq \frac{2}{(\lambda +e) \log (\lambda+e) \log\log (\lambda+e+1)}.
\end{align*}	 
\item \label{lem:h_i.4.5} $h_1(1)>1/8$.
\item \label{lem:h_i.4.6} $\sup_{i,\lambda} |h'_i(\lambda)|<5$.
   \item \label{lem:h_i.4.75} Under Assumption \ref{AA2} on $\pi$,
\begin{align*}
 \int_0^1 \pi_1(\lambda)\rd \lambda>0.
\end{align*}
   \item \label{lem:h_i.5}
	 Under Assumptions \ref{AA1},  \ref{AA2} and \ref{AA3} on $\pi$,
\begin{align*}
 \int_0^\infty \lambda \pi(\lambda)\sup_i\{h'_i(\lambda)\}^2   \rd  \lambda <\infty.
\end{align*}
   \item \label{lem:h_i.6}
	 Under Assumptions \ref{AA1}, \ref{AA2} and \ref{AA3} on $\pi$, 
\begin{align*}
 \int_0^\infty  \pi_i(\lambda)   \rd  \lambda <\infty,\text{ for fixed }i.
\end{align*}
  \end{enumerate}
 \end{lemma}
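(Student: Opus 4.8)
The plan is to reduce every part to elementary properties of $G(t)=\log\log t$ on $t\ge e$, where $G\ge0$ is increasing and $G'(t)=1/(t\log t)>0$ is decreasing, writing $h_i(\lambda)=1-G(\lambda+e)/G(\lambda+e+i)$. Part \ref{lem:h_i.1} is then immediate: monotonicity in $i$ holds because $G(\lambda+e+i)$ increases in $i$ with the numerator fixed; $h_i(\lambda)\to1$ because $G(\lambda+e+i)\to\infty$; and for monotonicity in $\lambda$ it suffices that $G(\lambda+e)/G(\lambda+e+i)$ increase in $\lambda$, which upon taking logarithmic derivatives reduces to the fact that $G'(t)/G(t)=1/\{t\log t\log\log t\}$ is decreasing for $t>e$.

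For Part \ref{lem:h_i.2} I would write $h_i(\lambda)=\{G(\lambda+e+i)-G(\lambda+e)\}/G(\lambda+e+i)$ and apply the mean value theorem to the numerator, $G(\lambda+e+i)-G(\lambda+e)=iG'(\xi)$ with $\xi\in(\lambda+e,\lambda+e+i)$; since $\xi\sim\lambda+e$ as $\lambda\to\infty$, elementary asymptotics give $iG'(\xi)\sim i/\{(\lambda+e)\log(\lambda+e)\}$ and $G(\lambda+e+i)\sim\log\log(\lambda+e)$, so that multiplying $h_i(\lambda)$ by $(\lambda+e+i)\log(\lambda+e+i)\log\log(\lambda+e+i)$ yields the limit $i$. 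Part \ref{lem:h_i.4} is the one genuine computation: differentiation gives $-h_i'(\lambda)=\{G'(\lambda+e)G(\lambda+e+i)-G(\lambda+e)G'(\lambda+e+i)\}/G(\lambda+e+i)^2$, which is nonnegative by Part \ref{lem:h_i.1}; discarding the subtracted nonnegative term leaves $|h_i'(\lambda)|\le G'(\lambda+e)/G(\lambda+e+i)$, and using $G(\lambda+e+i)\ge G(\lambda+e+1)$ for $i\ge1$ produces the stated bound.

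Parts \ref{lem:h_i.4.5}, \ref{lem:h_i.4.6} and \ref{lem:h_i.4.75} follow quickly. Part \ref{lem:h_i.4.5} is a direct numerical evaluation of $h_1(1)$. For Part \ref{lem:h_i.4.6} the right-hand side of the Part \ref{lem:h_i.4} bound is decreasing in $\lambda$, so its supremum is attained at $\lambda=0$, where it is well below $5$. For Part \ref{lem:h_i.4.75}, since $h_1$ is decreasing we have $h_1(\lambda)\ge h_1(1)>1/8$ on $[0,1]$, whence $\pi_1=\pi h_1^2\ge\pi/64$ there, and positivity of $\int_0^1\pi$ comes from Assumption \ref{AA2}, $\pi(\lambda)=\lambda^\alpha\nu(\lambda)$ with $\nu(0)>0$ and $\alpha>-1/2$.

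The substance lies in Parts \ref{lem:h_i.5} and \ref{lem:h_i.6}, where the hypotheses on $\pi$ enter through Lemma \ref{lem:pi}. For Part \ref{lem:h_i.5} I would insert the Part \ref{lem:h_i.4} bound, giving $\lambda\pi(\lambda)\sup_i\{h_i'(\lambda)\}^2\lesssim\lambda\pi(\lambda)/\{(\lambda+e)^2\log^2(\lambda+e)(\log\log(\lambda+e+1))^2\}$, and split at $\lambda=1$: on $[0,1]$ the kernel is bounded and $\int_0^1\lambda\pi<\infty$ by Assumption \ref{AA2}, while on $[1,\infty)$ the integrand is $\lesssim\pi(\lambda)/\{\lambda\log^2\lambda(\log\log\lambda)^2\}$, which is integrable once the tail bound $\pi(\lambda)\lesssim(\log\lambda)^{1-\epsilon}$ from Part \ref{lem:pi:7} of Lemma \ref{lem:pi} is invoked. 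For Part \ref{lem:h_i.6} I would instead bound $h_i$ itself: from $G(\lambda+e+i)-G(\lambda+e)\le iG'(\lambda+e)$ ($G'$ decreasing) we get $h_i(\lambda)\le i/\{(\lambda+e)\log(\lambda+e)\log\log(\lambda+e+1)\}$, so on $[1,\infty)$, $h_i^2\lesssim i^2/\lambda^2$ and $\int_1^\infty\pi_i\lesssim i^2\int_1^\infty\pi(\lambda)/\lambda^2\,\rd\lambda<\infty$ by Part \ref{lem:pi:5}, while on $[0,1]$ we use $h_i\le1$ and $\int_0^1\pi<\infty$. The main obstacle is less any individual estimate than keeping the bounds uniform in $i$ (Parts \ref{lem:h_i.4}, \ref{lem:h_i.4.6}, \ref{lem:h_i.5}) while controlling the behavior near $\lambda=0$, where $\log\log(\lambda+e)$ vanishes; replacing the argument by $\lambda+e+1$ in the denominators is the device that resolves this, and matching the tail decay of $h_i$ and $h_i'$ against the admissible growth of $\pi$ from Lemma \ref{lem:pi} is precisely what forces the integrals to converge.
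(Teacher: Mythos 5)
Your proposal is correct and follows essentially the same route as the paper: explicit differentiation of $h_i$, a uniform-in-$i$ bound on $|h_i'|$ obtained by replacing $\log\log(\lambda+e+i)$ with $\log\log(\lambda+e+1)$, and then splitting the integrals in Parts \ref{lem:h_i.5}--\ref{lem:h_i.6} and invoking the tail bound $\pi(\lambda)\lesssim\{\log\lambda\}^{1-\epsilon}$ and the integrability facts from Lemma \ref{lem:pi}. The only noteworthy variations are that your sign analysis in Part \ref{lem:h_i.4} actually yields the bound without the factor $2$, and that for Part \ref{lem:h_i.6} (whose proof the paper omits as ``similar'') you give a clean direct bound $h_i(\lambda)\le iG'(\lambda+e)/G(\lambda+e+1)$ and finish with Part \ref{lem:pi:5} of Lemma \ref{lem:pi}, which is a perfectly valid and slightly more economical route for that part.
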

\begin{proof}\mbox{}
 [Part \ref{lem:h_i.1}] The part is straightforward given the form of $h_i(\lambda)$.

 [Part \ref{lem:h_i.2}] This follows from the expression,
 \begin{align*}
 h_i(\lambda)
  &=\frac{1}{\log\log(\lambda+e+i)} \log\frac{\log(\lambda+e+i)}{\log(\lambda+e)}\\
 &=\frac{\log(\{\lambda+e+i\}/\{\lambda+e\})}{\log(\lambda+e+i)\log\log(\lambda+e+i)}\zeta\left(\frac{\log(\{\lambda+e+i\}/\{\lambda+e\})}{\log(\lambda+e+i)}\right) \\
 &=\frac{i}{(\lambda+e+i)\log(\lambda+e+i)\log\log(\lambda+e+i)} \\
 &\quad\times \zeta\left(\frac{i}{\lambda+e+i}\right)\zeta\left(\frac{\log(\{\lambda+e+i\}/\{\lambda+e\})}{\log(\lambda+e+i)}\right),
 \end{align*}
where $\zeta(x)=-\log(1-x)/x$
 which satisfies $\lim_{x\to 0+}\zeta(x)=1$. 

[Part \ref{lem:h_i.4}] The derivative is 
 \begin{align*}
  h'_i(\lambda)&=-\frac{1}{(\lambda+e)\log(\lambda+e)\log\log(\lambda+e+i)} \\
  &\quad +\frac{\log\log(\lambda+e)}{(\lambda+e+i)\log(\lambda+e+i)\{\log\log(\lambda+e+i)\}^2}.
 \end{align*}
Hence we have
\begin{align*}
 |h'_i(\lambda)|&\leq\left|\frac{1}{(\lambda+e)\log(\lambda+e)\log\log(\lambda+e+i)}\right| \\ &\quad +
 \left|\frac{\log\log(\lambda+e)}{(\lambda+e+i)\log(\lambda+e+i)\{\log\log(\lambda+e+i)\}^2}\right| \\
 &\leq \frac{2}{(\lambda+e)\log(\lambda+e)\log\log(\lambda+e+1)}
 \end{align*} 
which does not depend on $i$.

 [Part \ref{lem:h_i.4.5}] At $\lambda=1$, $h_1(\lambda)$ is
 \begin{align*}
  h_1(1)=1-\frac{\log\log(1+e)}{\log\log(2+e)} =
  1-\frac{\int_e^{1+e} 1/(\lambda\log \lambda)\rd \lambda}{\int_e^{2+e} 1/(\lambda\log \lambda)\rd \lambda} 
  =\frac{\int_{1+e}^{2+e} 1/(\lambda\log \lambda)\rd \lambda}{\int_e^{2+e} 1/(\lambda\log \lambda)\rd \lambda},
\end{align*}
which is greater than
\begin{equation*}
\frac{1/\{(2+e)\log(2+e)\}}{2(1/e)} >\frac{1}{2}\frac{e}{2+e}\frac{1}{\log e^2} >\frac{1}{8}.
\end{equation*}
[Part \ref{lem:h_i.4.6}]
 The upper bound of $\sup_i|h'_i(\lambda)|$, derived in Part \ref{lem:h_i.4},
 is decreasing in $\lambda$ and hence
 \begin{align*}
  \sup_i|h'_i(\lambda)|\leq \sup_i|h'_i(\lambda)|\big|_{\lambda=0}=\frac{2}{e\log\log(e+1)}\leq \frac{1}{\log\log(e+1)}.
 \end{align*}
 Further we have
 \begin{gather*}
  \log\log(e+1)=\int_e^{e+1}\frac{\rd s}{s\log s} >\frac{\log(e+1)-\log(e)}{\log(e+1)}
=1-\frac{1}{\log(e+1)}, \\
\log(e+1)=\log(e)+\log\frac{e+1}{e}=1-\log\left(1-\frac{1}{e+1}\right)>1+\frac{1}{e+1} ,
 \end{gather*}
and hence $ \sup_{\lambda,i}|h'_i(\lambda)|\leq e+2<5$.

 [Part \ref{lem:h_i.4.75}]
 By Parts \ref{lem:h_i.1} and \ref{lem:h_i.4.5},
 $h^2_1(\lambda)\geq 1/64$ for $\lambda\in[0,1]$.
 By Assumption \ref{AA2} on $\pi$, there exists $\lambda_1\in(0,1)$
 such that $ \pi(\lambda) \geq \lambda^{\alpha}\{\nu(0)/2\}$ for $\lambda\in[0,\lambda_1]$.
 Then
 \begin{align*}
  \int_0^1 \pi(\lambda)h^2_1(\lambda)\rd \lambda \geq 
  \frac{\nu(0)}{2}\frac{1}{64}\int_0^{\lambda_1} \lambda^{\alpha}\rd \lambda 
=  \frac{\nu(0)\lambda_1^{\alpha+1}}{128(\alpha+1)} >0.
 \end{align*}
 [Part \ref{lem:h_i.5}] 
 As in Part \ref{lem:pi:7} of Lemma \ref{lem:pi}, there exist $\epsilon\in(0,1)$ and $\lambda_2>\exp(1)$ such that
 \begin{equation}\label{pi:asymptotic}
\pi(\lambda)\leq \pi(\lambda_2)\{\log \lambda\}^{1-\epsilon}\text{ for all }\lambda\geq \lambda_2.
 \end{equation}
 Then, by Part \ref{lem:h_i.4.6} and \eqref{pi:asymptotic}, we have
 \begin{align*}
&\int_0^\infty \lambda\pi(\lambda)\sup_i \{h'_i(\lambda)\}^2 \rd \lambda   \\
  &\leq
  25
  \int_0^{\lambda_2} \lambda  \pi(\lambda)  \rd  \lambda 
  +  \int_{\lambda_2}^\infty \lambda \pi(\lambda)\sup_i\{h'_i(\lambda)\}^2   \rd  \lambda \\
  &\leq 25  \int_0^{\lambda_2} \lambda  \pi(\lambda)  \rd  \lambda
  +  \pi(\lambda_2)\int_{\lambda_2}^\infty
 \frac{4 (\lambda+e)\log (\lambda+e)\rd  \lambda}{\left\{(\lambda+e)\log (\lambda+e)\log\log (\lambda+e)\right\}^2}    \\ 
  &=
  25
  \int_0^{\lambda_2} \lambda  \pi(\lambda)  \rd  \lambda 
  +  \frac{4\pi(\lambda_2)}{\log\log(\lambda_2+e)},
 \end{align*}
 where
$\int_0^{\lambda_2} \lambda  \pi(\lambda)\rd  \lambda $ in the first term of the right-hand side 
is bounded  by Part \ref{lem:pi:2} of Lemma \ref{lem:pi}.

 [Part \ref{lem:h_i.6}] The proof is omitted since it is similar to the proof of Part \ref{lem:h_i.5}.
\end{proof}

\subsection{Assumption on $f$}
 
\begin{lemma}\label{lem:f}
 Let Assumptions \ref{FF1}--\ref{FF3} hold.
\begin{enumerate}
 \item\label{lem:f.1} Also assume
 \begin{equation}\label{eq:lem:f.1}
 \limsup_{t\to\infty}\, t\frac{f'(t)}{f(t)}<-\frac{p+n}{2}-2-j
 \end{equation}
      for $j\geq 0$ (hence $j=0$ for Assumption \ref{FF3.1} and $j=1$ for Assumption \ref{FF3.2}).
\begin{enumerate}[label= 1.\Alph*]
 \item\label{lem:f.1.1} Then there exist $\epsilon\in(0,1)$ and $t_*>1$ such that
\begin{equation}\label{fF.asymptotic}
 \begin{split}
 f(t)&\leq \frac{f(t_*)}{t_*^{-(p+n)/2-2-j-\epsilon}}t^{-(p+n)/2-2-j-\epsilon}, \\
F(t)&\leq \frac{tf(t)}{(p+n)+2+2j+2\epsilon},
\end{split}
\end{equation}
for all $t\geq t_*$,  where
 \begin{align*}
  F(t)=\frac{1}{2}\int_t^\infty f(s)\rd s.
 \end{align*}
\item \label{lem:f.1.2}
 \begin{align*}
  \int_0^\infty t^{(p+n)/2-1+j}\left\{\frac{F(t)}{f(t)}\right\}^2f(t) \rd t<\infty.
 \end{align*}
\end{enumerate}
\item\label{lem:f.2} Assume Assumption \ref{FF3.2}. Also assume $p\geq 3$. Let 
\begin{align*}
\tilde{\mathcal{F}}(t)&=t^{1/2}F(t)/f(t), \\
 f_\star(t)&=\int_0^\infty \eta^{n/2-1}\tilde{\mathcal{F}}^2(t+\eta)f(t+\eta)
 \rd \eta.
\end{align*}
      Then there exists $\mathcal{Q}_f>0$ such that
      \begin{align}
       \int_{\mathbb{R}^p}\frac{1}{\|y\|^2}f_\star(\|y-\mu\|^2)\rd y \leq \mathcal{Q}_f\min(1,1/\|\mu\|^{2}).
      \end{align}
\end{enumerate}
\end{lemma}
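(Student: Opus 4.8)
The plan is to reduce the claimed bound to two pointwise facts about the profile $f_\star$, namely that it is globally bounded and decays like a sufficiently negative power, and then to run a region-splitting argument in which the only delicate point is the singularity of $1/\|y\|^2$, which is integrable precisely because $p\geq 3$. First I would record from Part \ref{lem:f.1.1}, applied with $j=1$ (legitimate since Assumption \ref{FF3.2} is exactly \eqref{eq:lem:f.1} with $j=1$), the two asymptotic inequalities $f(s)\leq C s^{-(p+n)/2-3-\epsilon}$ and $F(s)\leq sf(s)/\{(p+n)+4+2\epsilon\}$ for $s\geq t_*$. Substituting into $\tilde{\mathcal{F}}^2(s)f(s)=sF^2(s)/f(s)$ gives $\tilde{\mathcal{F}}^2(s)f(s)\leq C' s^{-(p+n)/2-\epsilon}$ for $s\geq t_*$, while continuity and positivity of $f$ (Assumptions \ref{FF1}, \ref{FF2}) make $\tilde{\mathcal{F}}^2 f$ bounded on every compact interval.

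Feeding this into the definition of $f_\star$, for $t\geq t_*$ the bound on $\tilde{\mathcal{F}}^2(t+\eta)f(t+\eta)$ together with $\eta=tw$ yields
\[
f_\star(t)\leq C'\int_0^\infty \eta^{n/2-1}(t+\eta)^{-(p+n)/2-\epsilon}\rd\eta
= C'' t^{-p/2-\epsilon},
\]
the $w$-integral being a finite Beta constant because $n/2>0$ and $p/2+\epsilon>0$. Splitting the $\eta$-integral at $t_*$ and using the compact-interval bound shows $f_\star$ is bounded on $[0,t_*]$; hence there is $B_0$ with $f_\star(t)\leq B_0$ for all $t$, together with $f_\star(t)\leq C''t^{-p/2-\epsilon}$ for $t\geq t_*$. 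I would also note that $K:=\int_{\mathbb{R}^p}f_\star(\|y\|^2)\rd y<\infty$: passing to polar coordinates and exchanging integration order (Fubini, nonnegative integrand), the inner integral $\int_0^t w^{p/2-1}(t-w)^{n/2-1}\rd w$ is a Beta constant times $t^{(p+n)/2-1}$, so $K$ is a positive constant times $\int_0^\infty t^{(p+n)/2}\{F(t)/f(t)\}^2 f(t)\rd t$, which is finite by Part \ref{lem:f.1.2} with $j=1$.

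With these in hand I would prove two uniform estimates. For the first, for every $\mu$,
\begin{align*}
\int_{\mathbb{R}^p}\frac{1}{\|y\|^2}f_\star(\|y-\mu\|^2)\rd y
&\leq B_0\int_{\|y\|\leq 1}\frac{\rd y}{\|y\|^2}+\int_{\|y\|>1}f_\star(\|y-\mu\|^2)\rd y \\
&\leq \frac{2B_0 c_p}{p-2}+K=:C_1,
\end{align*}
where the first term is finite exactly because $p\geq 3$. For the second, for $\|\mu\|\geq 2\sqrt{t_*}$ I would split at $\|y\|=\|\mu\|/2$. On $\{\|y\|\geq\|\mu\|/2\}$ one has $1/\|y\|^2\leq 4/\|\mu\|^2$, so that piece is at most $4K/\|\mu\|^2$. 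On $\{\|y\|<\|\mu\|/2\}$ one has $\|y-\mu\|^2\geq\|\mu\|^2/4\geq t_*$, so $f_\star(\|y-\mu\|^2)\leq C''(\|\mu\|^2/4)^{-p/2-\epsilon}$, and since $\int_{\|y\|<R}\|y\|^{-2}\rd y=2c_pR^{p-2}/(p-2)$ with $R=\|\mu\|/2$, this piece is at most a constant times $\|\mu\|^{-p-2\epsilon}\|\mu\|^{p-2}=\|\mu\|^{-2-2\epsilon}\leq\|\mu\|^{-2}$. Taking $\mathcal{Q}_f=\max(C_1,\,4t_*C_1,\,C_2)$, with $C_2$ the large-$\|\mu\|$ constant, then delivers $\int\|y\|^{-2}f_\star(\|y-\mu\|^2)\rd y\leq\mathcal{Q}_f\min(1,\|\mu\|^{-2})$ in the three ranges $\|\mu\|\leq 1$, $1<\|\mu\|\leq 2\sqrt{t_*}$, and $\|\mu\|>2\sqrt{t_*}$.

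The substantive point, more than an obstacle, is the exponent bookkeeping in the last split together with the essential role of $p\geq 3$: the decay exponent of $f_\star$ must be strong enough that, after multiplying by the volume growth $R^{p-2}$ of the singular weight over a ball of radius $\|\mu\|/2$, the powers of $\|\mu\|$ collapse to a quantity no larger than $\|\mu\|^{-2}$. The hypothesis $p\geq 3$ enters twice and essentially, making $\|y\|^{-2}$ locally integrable (so $C_1<\infty$) and guaranteeing that $\int_{\|y\|<R}\|y\|^{-2}\rd y$ converges with the right $R^{p-2}$ growth; for $p=2$ both would fail. The only remaining care is to check the finiteness of the Beta-type integrals, which reduces to $n\geq 2$ and $p/2+\epsilon>0$, and to confirm that Assumption \ref{FF3.2} is exactly what licenses the use of Parts \ref{lem:f.1.1} and \ref{lem:f.1.2} with $j=1$.
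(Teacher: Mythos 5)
Your proof is correct and follows essentially the same route as the paper's: the same pointwise decay bound $f_\star(t)\lesssim t^{-p/2-\epsilon}$ for $t\geq t_*$, the same reduction of $\int_{\mathbb{R}^p}f_\star(\|y\|^2)\rd y$ to Part 1.B with $j=1$, and a region-splitting argument in which $p\geq 3$ controls the $\|y\|^{-2}$ singularity. The only (harmless) differences are that you split at $\|y\|=\|\mu\|/2$ into two regions where the paper uses three regions defined through $\|y-\mu\|^2$, and that you treat the intermediate range $1<\|\mu\|\leq 2\sqrt{t_*}$ explicitly via a bound uniform in $\mu$, a point the paper leaves implicit.
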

\begin{proof}\mbox{}
[Part \ref{lem:f.1.1}] By \eqref{eq:lem:f.1}, there exist $t_*>1$ and $\epsilon\in(0,1)$ such that
\begin{equation}\label{eq:ffff}
 t\frac{f'(t)}{f(t)}\leq -\frac{p+n}{2}-2-\epsilon-j
\end{equation}
for all $t\geq t_*$.
Then, by \eqref{eq:ffff}, we have
\begin{align}
& \int_{t_*}^t \frac{f'(s)}{f(s)} \rd s
 \leq \left(-\frac{p+n}{2}-2-j-\epsilon\right) \int_{t_*}^t\frac{ \rd s}{s} \quad\text{ for }t\geq t_*, \notag \\
&\Leftrightarrow \log \frac{f(t)}{f(t_*)} \leq \left(-\frac{p+n}{2}-2-j-\epsilon\right)\log \frac{t}{t_*} \quad\text{ for }t\geq t_*, \notag \\
& \Leftrightarrow  f(t)\leq \frac{f(t_*)}{t_*^{-(p+n)/2-2-j-\epsilon}}t^{-(p+n)/2-2-j-\epsilon}
 \quad\text{ for }t\geq t_*. \label{proof.f.1.2}
\end{align}
Further, by \eqref{eq:ffff}, we have
\begin{align*}
 t f'(t) \leq -\left(\frac{p+n}{2}+2+j+\epsilon\right)f(t),
\end{align*}
for all $t\geq t_*$, and hence
 \begin{equation}\label{eq:lem:f.1.1}
 \int_t^\infty s f'(s)\rd s \leq -\left(\frac{p+n}{2}+2+j+\epsilon\right)\int_t^\infty f(s)\rd s.  
 \end{equation}
 By an integration by parts, the left-hand side
 is rewritten as
 \begin{equation*}
  \int_t^\infty s f'(s)\rd s
   =[sf(s)]_t^\infty - \int_t^\infty  f(s)\rd s   = -tf(t)- 2F(t),
 \end{equation*}
 where the second equality follows from $[sf(s)]_t^\infty=-tf(t)$  by \eqref{proof.f.1.2}.
Then the inequality \eqref{eq:lem:f.1.1} is equivalent to
\begin{equation}\label{eq:F.-1}
 \begin{split}
 -tf(t)- 2F(t) &\leq -2\left(\frac{p+n}{2}+2+j+\epsilon\right)F(t), \\
\Leftrightarrow\quad  \frac{F(t)}{f(t)}&\leq\frac{t}{(p+n)+2+2j+2\epsilon},
 \end{split}
\end{equation} 
for all $t\geq t_*$. Hence Part \ref{lem:f.1.1} follows from \eqref{proof.f.1.2} and \eqref{eq:F.-1}.

 \medskip

[Part \ref{lem:f.1.2}]  By Assumption \ref{FF1}, we have
\begin{equation}\label{eq:F.0}
 \int_0^1 f(s) \rd s<\infty.
\end{equation}
 Also the integrability given by \eqref{assmp.f.1},
 \begin{align*}
 \int_1^\infty s^{(p+n)/2-1}f(s) \rd s<\infty,
 \end{align*}
implies
\begin{equation}\label{eq:F.1}
 \int_1^\infty f(s) \rd s<\infty.
\end{equation}
By \eqref{eq:F.0} and \eqref{eq:F.1}, we have
\begin{equation}\label{eq:F.2}
F(0)= \frac{1}{2}\int_0^\infty f(s) \rd s<\infty.
\end{equation}
Note $0<f(0)<\infty$ by Assumption \ref{FF1}. Also by \eqref{eq:F.-1} and \eqref{eq:F.2},
it follows that there exists $C_f>0$ such that
\begin{equation}\label{eq:F.3}
 \frac{F(t)}{f(t)}\leq C_f\max(t,t_*),\quad \forall t\geq 0.
\end{equation}
 By \eqref{eq:F.3}, for $t\in[0,1]$, we have
\begin{equation}\label{proof.f.1}
t^{j}\left\{\frac{F(t)}{f(t)}\right\}^2f(t) \leq C_f^2t_*^2\max_{t\in[0,1]}f(t) 
\end{equation}
 and hence
\begin{equation}\label{proof.f.1.1}
 \int_0^1 t^{(p+n)/2-1+j}\left\{\frac{F(t)}{f(t)}\right\}^2f(t) \rd t  
\leq \frac{2C_f^2t_*^2}{p+n}\max_{t\in[0,1]}f(t) <\infty.
\end{equation}
By \eqref{proof.f.1.2} and \eqref{eq:F.3},  we have
\begin{equation}\label{proof.f.2}
 t^{j}\left\{\frac{F(t)}{f(t)}\right\}^2f(t) \leq \frac{f(t_*)C_f^2}{t_*^{-(p+n)/2-2-j-\epsilon}}
  t^{-(p+n)/2-\epsilon}
\end{equation}
for $t\geq t_*$ and hence
\begin{equation}\label{proof.f.1.1.3}
 \begin{split}
 \int_{t_*}^\infty t^{(p+n)/2-1+j}\left\{\frac{F(t)}{f(t)}\right\}^2f(t) \rd t 
&\leq \frac{f(t_*)C_f^2}{t_*^{-(p+n)/2-2-j-\epsilon}}\int_{t_*}^\infty t^{-1-\epsilon}\rd t \\
  &= \frac{f(t_*)C_f^2}{\epsilon t_*^{-(p+n)/2-2-j}} <\infty.  
 \end{split}
\end{equation} 
Combining \eqref{proof.f.1.1} and \eqref{proof.f.1.1.3}, completes the proof of Part \ref{lem:f.1.2}.

 \medskip
 
 [Part \ref{lem:f.2}]
Note, by Part \ref{lem:f.1} of this lemma with $j=1$,
\begin{equation}\label{eq:lem:f.2.1}
 \begin{split}
  \int_0^\infty t^{(p+n)/2-1+1}\left\{\frac{F(t)}{f(t)}\right\}^2f(t) \rd t 
  & = \int_0^\infty t^{(p+n)/2-1}\tilde{\mathcal{F}}^2(t)f(t) \rd t \\
  &<\infty.
 \end{split}
\end{equation} 
To prove Part \ref{lem:f.2}, it suffices to show that, for $\|\mu\|=0$,
      \begin{align}\label{proof.f.2.around.zero}
       \int_{\mathbb{R}^p}\frac{1}{\|y\|^2}f_\star(\|y\|^2)\rd y <\infty
      \end{align}
and also that there exist $a>0$ and $b>0$ such that 
      \begin{align}\label{proof.f.2.around.infty}
      \|\mu\|^2 \int_{\mathbb{R}^p}\frac{1}{\|y\|^2}f_\star(\|y-\mu\|^2)\rd y <b 
      \end{align}
 for all $\|\mu\|^2\geq a$.
 
[Bound in \eqref{proof.f.2.around.zero}] 
Note $f_\star(0)$ is decomposed as
\begin{equation}\label{eq:f_star_dayo}
 \begin{split}
 f_\star(0)&=\int_0^\infty \eta^{n/2-1}\tilde{\mathcal{F}}^2(\eta)f(\eta)\rd \eta \\
  &=\int_0^1\eta^{n/2-1}\tilde{\mathcal{F}}^2(\eta)f(\eta)\rd \eta+
  \int_1^{t_*}\eta^{n/2-1}\tilde{\mathcal{F}}^2(\eta)f(\eta)\rd \eta \\
&\qquad + \int_{t_*}^\infty\eta^{n/2-1}\tilde{\mathcal{F}}^2(\eta)f(\eta)\rd \eta ,
 \end{split}
\end{equation} 
where $t_*$ is from \eqref{eq:ffff}.
The first and third terms both are integrable since,
 by \eqref{proof.f.1},
\begin{equation}\label{eq:proof.f.2.0.1}
 \begin{split}
  \int_0^1\eta^{n/2-1}\tilde{\mathcal{F}}^2(\eta)f(\eta)\rd \eta 
&\leq C_f^2t_*^2\max_{\eta\in[0,1]}f(\eta)
 \int_0^1\eta^{n/2-1}\rd\eta \\
& =C_f^2t_*^2\frac{2}{n}\max_{\eta\in[0,1]}f(\eta),
 \end{split}
\end{equation}
 and by \eqref{proof.f.2},
\begin{equation}\label{eq:proof.f.2.t_*infty}
 \begin{split}
  \int_{t_*}^\infty \eta^{n/2-1}\tilde{\mathcal{F}}^2(\eta)f(\eta)\rd \eta 
  &\leq
\frac{f(t_*)C_f^2}{t_*^{-(p+n)/2-3-\epsilon}}
\int_{t_*}^\infty  \eta^{n/2-1-(p+n)/2-\epsilon} \rd \eta \\
  & =
\frac{f(t_*)C_f^2}{t_*^{-(p+n)/2-3-\epsilon}}\frac{t_*^{-p/2-\epsilon}}{p/2+\epsilon}.
 \end{split}
\end{equation}
 By \eqref{eq:f_star_dayo}, \eqref{eq:proof.f.2.0.1} and \eqref{eq:proof.f.2.t_*infty}, we have
$f_\star(0)<\infty$. Then, by continuity of $f_\star$, it follows that
 \begin{equation}\label{eq:f_star_0_1}
\sup_{t\in[0,1]}f_\star(t)<\infty.
 \end{equation}
  Further the integrability of $\int_{\mathbb{R}^p}f_\star(\|y\|^2)\rd y $ follows since
\begin{equation}\label{eq:int_f_star}
 \begin{split}
     \int_{\mathbb{R}^p}f_\star(\|y\|^2)\rd y 
& = \int_{\mathbb{R}^p}\int_0^\infty \eta^{n/2-1}\tilde{\mathcal{F}}^2(\|y\|^2+\eta)f(\|y\|^2+\eta)\rd \eta \rd y\\
 &=\frac{1}{c_n}\int_{\mathbb{R}^{p+n}}\tilde{\mathcal{F}}^2(\|q\|^2)f(\|q\|^2)\rd q \\
   &=\frac{c_{p+n}}{c_n}\int_0^\infty t^{(p+n)/2-1}\tilde{\mathcal{F}}^2(t)f(t)\rd t \\
   &<\infty \ (\text{by \eqref{eq:lem:f.2.1}}).
  \end{split}
\end{equation}  
 Then, by \eqref{eq:f_star_0_1} and \eqref{eq:int_f_star}, we have
      \begin{align*}
       \int_{\mathbb{R}^p}\frac{f_\star(\|y\|^2)}{\|y\|^2}\rd y 
&       \leq \sup_{\|y\|\leq 1}f_\star(\|y\|^2) \int_{\|y\|\leq 1}\frac{\rd y}{\|y\|^2}
+\int_{\|y\|\geq 1}f_\star(\|y\|^2)\rd y \\
&\leq \frac{2c_p }{p-2}\sup_{\|y\|\leq 1}f_\star(\|y\|^2)+
       \int_{\mathbb{R}^p}f_\star(\|y\|^2)\rd y \\
       &<\infty.
      \end{align*}
 Hence the bound in \eqref{proof.f.2.around.zero} is established.
 
 [Bound in \eqref{proof.f.2.around.infty}] Let $ \|\mu\|^2>2t_*$
where $t_*$ is from \eqref{eq:ffff}.
Under the decomposition  of the integral region,
\begin{align*}
 \mathbb{R}^p
 &= \left\{y:\|y-\mu\|^2\leq \|\mu\|^2/2\right\} \\ &\qquad \cup
 \left\{y:\|y-\mu\|^2\geq \|\mu\|^2/2\text{ and }0\leq \|y\|^2\leq \|\mu\|^2\right\}
 \\ &\qquad \cup
 \left\{y:\|y-\mu\|^2\geq \|\mu\|^2/2\text{ and } \|y\|^2\geq \|\mu\|^2\right\} \\
&=\mathcal{R}_1\cup\mathcal{R}_2\cup\mathcal{R}_3,
\end{align*}
 we have
\begin{align*}
&\|\mu\|^2 \int_{\mathbb{R}^p}\frac{f_\star(\|y-\mu\|^2)}{\|y\|^2}\rd y  \\
 &= \|\mu\|^2 \left(\int_{\mathcal{R}_1}+\int_{\mathcal{R}_2}+\int_{\mathcal{R}_3}\right)
 \frac{f_\star(\|y-\mu\|^2)}{\|y\|^2}\rd y .
\end{align*}
For the region $\mathcal{R}_1$, $\|y-\mu\|^2\leq \|\mu\|^2/2$ implies $\|y\|^2\geq \|\mu\|^2/2$
 and hence
\begin{align*}
 \|\mu\|^2 \int_{\mathcal{R}_1}\frac{f_\star(\|y-\mu\|^2)}{\|y\|^2}\rd y
&\leq 2\int_{\mathcal{R}_1}f_\star(\|y-\mu\|^2)\rd y \\
&\leq 2\int_{\mathbb{R}^p}f_\star(\|y-\mu\|^2)\rd y, 
\end{align*}
which is bounded by \eqref{eq:int_f_star}. 
 Similarly, for $\mathcal{R}_1$, since $\|y\|^2\geq \|\mu\|^2$, we have
 \begin{align*}
   \|\mu\|^2 \int_{\mathcal{R}_3}\frac{f_\star(\|y-\mu\|^2)}{\|y\|^2}\rd y
\leq \int_{\mathbb{R}^p}f_\star(\|y-\mu\|^2)\rd y  <\infty.
 \end{align*}

 For the region
\begin{align*}
\mathcal{R}_2= \left\{y:\|y-\mu\|^2\geq \|\mu\|^2/2\text{ and }0\leq \|y\|^2\leq \|\mu\|^2\right\},
\end{align*}
 we have
\begin{align*}
 \mathcal{R}_2\subset\left\{y:\|y-\mu\|^2\geq \|\mu\|^2/2\right\}, \quad
\mathcal{R}_2\subset\left\{y:0\leq \|y\|^2\leq \|\mu\|^2\right\}.
\end{align*}
 Hence
\begin{equation}\label{eq:Rtwo.1}
 \begin{split}
&   \|\mu\|^2 \int_{\mathcal{R}_2}\frac{f_\star(\|y-\mu\|^2)}{\|y\|^2}\rd y \\
&\leq \sup_{y:\|y-\mu\|^2\geq \|\mu\|^2/2}f_\star(\|y-\mu\|^2)
  \int_{y:0\leq \|y\|^2\leq \|\mu\|^2}\frac{\|\mu\|^2}{\|y\|^2}\rd y, 
\end{split}
\end{equation}
 where
\begin{equation}\label{eq:Rtwo.2}
 \int_{y:0\leq \|y\|^2\leq \|\mu\|^2}\frac{\|\mu\|^2}{\|y\|^2}\rd y 
 = \|\mu\|^2 c_p \int_0^{\|\mu\|^2} r^{p/2-2}\rd r=\frac{2c_p}{p-2}\|\mu\|^p.
\end{equation} 
Recall the assumption $\|\mu\|^2>2t_*$ and
 hence note
 \begin{equation}\label{eq:region_R2}
  \|y-\mu\|^2\geq \|\mu\|^2/2 > t_*.
 \end{equation}
By \eqref{proof.f.2}, the integrand of $f_\star$, for $t\geq t_*$, is bounded as
\begin{align*}
 \tilde{\mathcal{F}}^2(t)f(t)
\leq \frac{f(t_*)C_f^2}{t_*^{-(p+n)/2-3-\epsilon}} t^{-(p+n)/2-\epsilon}
\end{align*}
 and hence $f_\star(t)$ for $t\geq t_*$ is bounded as
\begin{equation}\label{eq:int_f_star_R2} 
 \begin{split}
f_\star(t)&= \int_0^\infty \eta^{n/2-1}\tilde{\mathcal{F}}^2(t+\eta)f(t+\eta)\rd \eta \\
&\leq \frac{f(t_*)C_f^2}{t_*^{-(p+n)/2-3-\epsilon}}\int_0^\infty \eta^{n/2-1}(t+\eta)^{-(p+n)/2-\epsilon} \rd \eta \\
 &= \tilde{C}_f t^{-p/2-\epsilon},
\end{split}
\end{equation}
 where
 \begin{align*}
  \tilde{C}_f=f(t_*)C_f^2 t_*^{(p+n)/2+3+\epsilon}B(p/2+\epsilon,n/2).
 \end{align*}
 Then,  for any $y\in \{y:\|y-\mu\|^2\geq \|\mu\|^2/2\}$ with $\|\mu\|^2>2t_*$,
\begin{equation}\label{eq:Rtwo.3}
 f_\star(\|y-\mu\|^2)\leq \tilde{C}_f \{\|y-\mu\|^2\}^{-p/2-\epsilon}
 \leq \frac{\tilde{C}_f}{2^{p/2+\epsilon}}\|\mu\|^{-p-2\epsilon},
\end{equation}
where the first and second inequalities follow from \eqref{eq:int_f_star_R2} and
\eqref{eq:region_R2}, respectively.
By \eqref{eq:Rtwo.1}, \eqref{eq:Rtwo.2}, and \eqref{eq:Rtwo.3},
 \begin{align*}
  \|\mu\|^2 \int_{\mathcal{R}_2}\frac{f_\star(\|y-\mu\|^2)}{\|y\|^2}\rd y 
  \leq \frac{1}{\|\mu\|^{2\epsilon}}
  \left\{\tilde{C}_f 2^{p/2+\epsilon}\frac{2c_p}{p-2} \right\}
 \end{align*}
which is bounded under the assumption $\|\mu\|^2>2t_*$.
\end{proof}

\section{Preliminary results for completing Proof of Theorem \ref{thm:mainmain}}
\label{sec:ap.pre.main}
Note that the first three parts \ref{BL.-1}, \ref{BL.0} and \ref{BL.1}
of \citeapos{Blyth-1951} conditions needed to prove Theorem \ref{thm:mainmain}
follow from 
Parts \ref{lem:h_i.1}, \ref{lem:h_i.6} and \ref{lem:h_i.4.75} of Lemma \ref{lem:h_i},
respectively.
In this appendix we provide an alternative expression $\oldiff(z;\delta_\pi,\delta_{\pi i};\pi_i)$ in \ref{BL.2}.
The proof of Theorem \ref{thm:mainmain}'s two cases, I and II is completed in the
two succeeding sections \ref{sec:caseI} and \ref{sec:caseII} respectively
using this re-expression.

Recall
as in \eqref{eq:ndiff} and \eqref{eq:oldiff},
\begin{equation}\label{eq:ndiff.oldiff.ap}
 \begin{split}
 \ndiff(\delta_\pi,\delta_{\pi i};\pi_i) 
&=c_n\int_{\mathbb{R}^p}\oldiff(z;\delta_\pi,\delta_{\pi i};\pi_i) \rd z, \\
 \oldiff(z;\delta_\pi,\delta_{\pi i};\pi_i)
  &=\{\psi_{\pi }(\|z\|^2)-\psi_{\pi i}(\|z\|^2)\}^2\|z\|^2M_1(z,\pi_i), \\
\end{split}
\end{equation}
with
\begin{align*}
 \psi_\pi(z)&=1- \frac{z^\T M_2(z,\pi)}{\|z\|^2 M_1(z,\pi)}=\frac{z^\T z M_1(z,\pi)- z^\T M_2(z,\pi)}{\|z\|^2 M_1(z,\pi)}, \\
 M_1(z,\pi)&=
 \iint \eta^{(2p+n)/2}f(\eta\{\|z-\theta\|^2+1\})
\bar{\pi}(\eta\|\theta\|^2) \rd \theta   \rd \eta,  \\
 M_2(z,\pi)&=
\iint \theta
  \eta^{(2p+n)/2} f(\eta\{\|z-\theta\|^2+1\}) \bar{\pi}(\eta\|\theta\|^2) \rd \theta  \rd \eta.
\end{align*}
The numerator of $\psi_\pi(z)$ is rewritten as
\begin{equation}\label{eq:numerator.another.exp}
 \begin{split}
&z^\T z M_1(z,\pi)- z^\T M_2(z,\pi) \\
&= z^\T\iint \eta (z-\theta)
  \eta^{(2p+n)/2-1} f(\eta\{\|z-\theta\|^2+1\}) 
\bar{\pi}(\eta\|\theta\|^2) \rd \theta  \rd \eta \\
 &= z^\T\iint 
 \eta^{(2p+n)/2-1} \nabla_\theta F(\eta\{\|z-\theta\|^2+1\})
 \bar{\pi}(\eta\|\theta\|^2) \rd \theta   \rd \eta \\
 &= -z^\T\iint 
  \eta^{(2p+n)/2-1} F(\eta\{\|z-\theta\|^2+1\}) \nabla_\theta\bar{\pi}(\eta\|\theta\|^2) \rd \theta   \rd \eta,
\end{split}
\end{equation}
where the last equality follows from an integration by parts. 
To justify this integration by parts, note that, for fixed $\theta_i$, the $i$-th component of $\theta$, we have
\begin{equation*}
 \lim_{\theta_i\to\pm\infty}F(\eta\{\|z-\theta\|^2+1\}) \bar{\pi}(\eta\|\theta\|^2)=0
\end{equation*}
for any fixed $\eta$, $z$, $\theta_1,\dots,\theta_{i-1},\theta_{i+1},\dots,\theta_p$,
since the asymptotic behavior of $\bar{\pi}$ and $F$ are 
given by
\begin{align*}
 \bar{\pi}(\lambda)=c_p^{-1}\lambda^{1-p/2}\pi(\lambda)=o(\lambda^{1-p/2}\log\lambda)
\text{ and }F(t)=o(t^{-(p+n)/2-1}),
\end{align*}
as in Part \ref{lem:pi:7} of Lemma \ref{lem:pi} and
Part \ref{lem:f.1.1} of Lemma \ref{lem:f}, respectively.
Thus the last equality of \eqref{eq:numerator.another.exp} follows.

Therefore $\oldiff(z;\delta_\pi,\delta_{\pi i};\pi_i)$ is re-expressed as
\begin{equation}\label{diff.general}
\begin{split}
& \oldiff(z;\delta_\pi,\delta_{\pi i};\pi_i) \\
&=\left\|\frac{\iint 
  \eta^{(2p+n)/2-1} F(\eta\{\|z-\theta\|^2+1\}) 
 \nabla_\theta\bar{\pi}(\eta\|\theta\|^2) \rd \theta   \rd \eta}
 {\iint 
  \eta^{(2p+n)/2} f(\eta\{\|z-\theta\|^2+1\}) 
\bar{\pi}(\eta\|\theta\|^2) \rd \theta   \rd \eta}\right. \\
 &\qquad \left.
- \frac{\iint 
  \eta^{(2p+n)/2-1} F(\eta\{\|z-\theta\|^2+1\}) 
 \nabla_\theta\bar{\pi}_i(\eta\|\theta\|^2) \rd \theta   \rd \eta}
 {\iint 
  \eta^{(2p+n)/2} f(\eta\{\|z-\theta\|^2+1\}) 
\bar{\pi}_{i}(\eta\|\theta\|^2) \rd \theta   \rd \eta}\right\|^2 \\
 &\qquad\qquad\times \iint\eta^{(2p+n)/2} f(\eta\{\|z-\theta\|^2+1\}) 
\bar{\pi}_{i}(\eta\|\theta\|^2) \rd \theta   \rd \eta.
\end{split}
\end{equation}
The proof of Theorem \ref{thm:mainmain}, Cases I and II, will be completed in Sections
\ref{sec:caseI} and \ref{sec:caseII}
by showing $\ndiff(\delta_\pi,\delta_{\pi i};\pi_i) \to 0$ as $i\to\infty$.

 \section{Proof for Case I}
\label{sec:caseI}
By \eqref{diff.general} and the decomposition
\begin{align*}
 \nabla_\theta\bar{\pi}_i(\eta\|\theta\|^2)
 &= \nabla_\theta\left\{\bar{\pi}(\eta\|\theta\|^2)h_i^2(\eta\|\theta\|^2)\right\}\\
 &=\{\nabla_\theta\bar{\pi}(\eta\|\theta\|^2)\}h_i^2(\eta\|\theta\|^2)
+\bar{\pi}(\eta\|\theta\|^2)\{\nabla_\theta h_i^2(\eta\|\theta\|^2)\},
\end{align*}
 we have
\begin{align*}
& \oldiff(z;\delta_\pi,\delta_{\pi i};\pi_i) \\ 
&=c_n\left\|\frac{\iint 
  \eta^{(2p+n)/2-1} F(\circ) 
 \nabla_\theta\bar{\pi}(\bullet) \rd \theta   \rd \eta}
 {\iint 
  \eta^{(2p+n)/2} f(\circ) 
\bar{\pi}(\bullet) \rd \theta   \rd \eta}
- \frac{\iint 
  \eta^{(2p+n)/2-1} F(\circ) 
\nabla_\theta\bar{\pi}(\bullet)h_i^2(\bullet) \rd \theta   \rd \eta}
 {\iint 
  \eta^{(2p+n)/2} f(\circ) 
\bar{\pi}_i(\bullet) \rd \theta   \rd \eta} \right. \\
 &\qquad \left.
- \frac{\iint 
  \eta^{(2p+n)/2-1} F(\circ) 
\bar{\pi}(\bullet)\nabla_\theta h_i^2(\bullet) \rd \theta   \rd \eta}
 {\iint 
  \eta^{(2p+n)/2} f(\circ) 
\bar{\pi}_i(\bullet) \rd \theta   \rd \eta}\right\|^2 
 \iint\eta^{(2p+n)/2} f(\circ) 
\bar{\pi}_i(\bullet) \rd \theta   \rd \eta,
\end{align*}
where, for notational convenience and to control the size of expressions,
\begin{align*}
 \bullet=\eta\|\theta\|^2, \quad \circ=\eta(\|z-\theta\|^2+1).
\end{align*}
Further, by the triangle inequality and the fact $h_i^2\leq 1$, we have
\begin{align*}
\oldiff(z;\delta_\pi,\delta_{\pi i};\pi_i)
 \leq 2c_n(\Delta_{1i}+\Delta_{2i}),
\end{align*}
where
\begin{align}
 \Delta_{1i}
& =
\frac{\left\| \iint 
  \eta^{(2p+n)/2-1} F(\circ) 
\bar{\pi}(\bullet)\nabla_\theta h_i^2(\bullet) \rd \theta   \rd \eta \right\|^2 }
 {\iint \eta^{(2p+n)/2} f(\circ) \bar{\pi}_i(\bullet) \rd \theta   \rd \eta}, \label{eq:deldel1}\\
\Delta_{2i}
 &=
 \frac{\left\|\iint   \eta^{(2p+n)/2-1} F(\circ) \nabla_\theta\bar{\pi}(\bullet) \rd \theta   \rd \eta \right\|^2}
 {\iint \eta^{(2p+n)/2} f(\circ) \bar{\pi}(\bullet) \rd \theta   \rd \eta} \notag\\ 
 &\quad +
\frac{\left\|\iint  \eta^{(2p+n)/2-1} F(\circ) 
\nabla_\theta\bar{\pi}(\bullet)h_i^2(\bullet) \rd \theta   \rd \eta \right\|^2}
 {\iint \eta^{(2p+n)/2} f(\circ) \bar{\pi}_i(\bullet) \rd \theta   \rd \eta}. \label{eq:deldel3}
\end{align}
The proof of Case I will be completed by showing that each $\Delta_{ji}$ for $j=1,2$ is
bounded by an integrable function. The theorem then follows by
the dominated convergence theorem since
$\lim_{i\to\infty}\ndiff(\delta_\pi,\delta_{\pi i};\pi_i) =0$ since $h_i^2\to 1$
and $\delta_{\pi_i}\to \delta_\pi$ in the expression of \eqref{eq:ndiff.oldiff.ap}.

 \subsection{$\Delta_{1i}$}
\label{sec:del1}
Note
\begin{align*}
\nabla_\theta h_i^2(\eta\|\theta\|^2)&=2 h_i(\eta\|\theta\|^2)\nabla_\theta h_i(\eta\|\theta\|^2), \\
 \|\nabla_\theta h_i(\eta\|\theta\|^2)\|^2
 &=4\eta^2\|\theta\|^2\{h'_i(\eta\|\theta\|^2)\}^2.
\end{align*}
Then, by Cauchy-Schwarz inequality,
\begin{align*}
\Delta_{1i}&=\frac{\|\iint 
  \eta^{(2p+n)/2-1}  F(\circ)
 \bar{\pi}(\bullet)\{2h_i(\bullet)\nabla_\theta h_i(\bullet)\} \rd \theta   \rd \eta\|^2}
 {\iint\eta^{(2p+n)/2} f(\circ) \bar{\pi}_i(\bullet) \rd \theta   \rd \eta} \\
&\leq 4\iint 
  \eta^{(2p+n)/2-2} \mathcal{F}^2(\circ) f(\circ) 
\bar{\pi}(\bullet)\|\nabla_\theta h_i(\bullet)\|^2 \rd \theta   \rd \eta \\
&=16\iint 
  \eta^{(2p+n)/2-2} \mathcal{F}^2(\circ) f(\circ) 
\bar{\pi}(\bullet)\eta^2 \|\theta\|^2 \{h'_i(\bullet)\}^2  \rd \theta   \rd \eta,
\end{align*}
where $\mathcal{F}(t)=F(t)/f(t)$.
Then
\begin{align*}
\frac{\int_{\mathbb{R}^p}\sup_i\Delta_{1i} \rd z}{16} 
 &\leq
 \iiint 
 \eta^{(2p+n)/2-1} \mathcal{F}^2(\eta\{\|z-\theta\|^2+1\})
 f(\eta\{\|z-\theta\|^2+1\}) \\
 &\qquad \times\eta\|\theta\|^2\bar{\pi}(\eta\|\theta\|^2)\sup_i\{h'_i(\eta\|\theta\|^2)\}^2
 \rd \theta   \rd \eta  \rd  z \\
 &
= \iiint  \eta^{(p+n)/2-1} \mathcal{F}^2(\eta\{\|z\|^2+1\})
 f(\eta\{\|z\|^2+1\}) \\
 &\qquad \times\|\mu\|^2\bar{\pi}(\|\mu\|^2)\sup_i\{h'_i(\|\mu\|^2)\}^2
 \rd \mu   \rd \eta  \rd  z \\
 &
\leq A_1c_p  \iint w^{p/2-1}
 \eta^{(p+n)/2-1} \mathcal{F}^2(\eta\{w+1\}) f(\eta\{w+1\}) \rd  w  \rd \eta \\
 &= A_1c_p \int_0^\infty \frac{w^{p/2-1} \rd w}{(1+w)^{(p+n)/2}}
 \int_0^\infty t^{(p+n)/2-1} \mathcal{F}^2(t) f(t)  \rd t \\
 &\leq A_1A_2 c_p  B(p/2,n/2), 
\end{align*}
where
\begin{align}
 A_1&=\int_{\mathbb{R}^p} \|\mu\|^2\bar{\pi}(\|\mu\|^2)\sup_i \{h'_i(\|\mu\|^2)\}^2 \rd \mu \notag\\
 &=\int_0^\infty\lambda \pi(\lambda)\sup_i \{h'_i(\lambda)\}^2 \rd \lambda \label{proof.A_1} \\
 A_2&=\int_0^\infty t^{(p+n)/2-1} \mathcal{F}^2(t) f(t)  \rd t    \label{proof.A_2}
\end{align}
are both bounded as shown in Part \ref{lem:h_i.5} of Lemma \ref{lem:h_i} and in
Part \ref{lem:f.1} of Lemma \ref{lem:f}, respectively. 

\subsection{$\Delta_{2i}$}
\label{sec:del23}
We consider $\alpha>0$ and $-1/2<\alpha\leq 0$ separately in \ref{sec:subsub1}
and \ref{sec:subsub2}, respectively.
\subsubsection{$\Delta_{2i}$ under Assumption \ref{AA2} with $\alpha>0$}
\label{sec:subsub1}
By the Cauchy-Schwarz inequality, we have
\begin{equation}\label{eq:Del2.-1}
 \begin{split}
& \left\|\iint 
  \eta^{(2p+n)/2-1} F(\circ) 
\nabla_\theta\bar{\pi}(\bullet) \rd \theta   \rd \eta \right\|^2 \\
& \leq
 \iint   \eta^{(2p+n)/2-2} \mathcal{F}^2(\circ) f(\circ)
 \|\nabla_\theta\bar{\pi}(\bullet)/\bar{\pi}(\bullet)\|^2 \bar{\pi}(\bullet) \rd \theta   \rd \eta \\
 &\qquad \times
 \iint   \eta^{(2p+n)/2} f(\circ) \bar{\pi}(\bullet) \rd \theta   \rd \eta .
\end{split}
\end{equation}
Similarly, by the Cauchy-Schwarz inequality, we have
\begin{equation}\label{eq:Del3.-1}
 \begin{split}
& \left\|\iint 
  \eta^{(2p+n)/2-1} F(\circ) 
\nabla_\theta\bar{\pi}(\bullet) h_i^2(\bullet)\rd \theta   \rd \eta \right\|^2 \\
& \leq
 \iint   \eta^{(2p+n)/2-2} \mathcal{F}^2(\circ) f(\circ)
 \|\nabla_\theta\bar{\pi}(\bullet)/\bar{\pi}(\bullet)\|^2 \bar{\pi}(\bullet)h_i^2(\bullet) \rd \theta   \rd \eta \\
 &\qquad \times
  \iint   \eta^{(2p+n)/2} f(\circ) \bar{\pi}(\bullet)h_i^2(\bullet) \rd \theta   \rd \eta \\
& \leq
 \iint   \eta^{(2p+n)/2-2} \mathcal{F}^2(\circ) f(\circ)
 \|\nabla_\theta\bar{\pi}(\bullet)/\bar{\pi}(\bullet)\|^2 \bar{\pi}(\bullet) \rd \theta   \rd \eta \\
 &\qquad \times
  \iint   \eta^{(2p+n)/2} f(\circ) \bar{\pi}(\bullet)h_i^2(\bullet) \rd \theta   \rd \eta ,
\end{split}
\end{equation}
where the second inequality follows from the fact $h_i^2\leq 1$.
Hence, by \eqref{eq:Del2.-1} and \eqref{eq:Del3.-1} with  \eqref{eq:deldel3},
\begin{align*}
\sup_i\Delta_{2i}\leq 2\iint   \eta^{(2p+n)/2-2} \mathcal{F}^2(\circ) f(\circ)
 \left\|\frac{\nabla_\theta\bar{\pi}(\bullet)}{\bar{\pi}(\bullet)}\right\|^2 \bar{\pi}(\bullet) \rd \theta   \rd \eta.
\end{align*}
By the relationship
\begin{equation}\label{eq:nablathetapi}
\|\nabla_\theta \bar{\pi}(\eta\|\theta\|^2)\|^2
=4\eta^2\|\theta\|^2\{\bar{\pi}'(\eta\|\theta\|^2)\}^2,
\end{equation}
we have
\begin{align}
\frac{ \sup_i\Delta_{2i}}{2}& =\iint   \eta^{(2p+n)/2-2} \mathcal{F}^2(\circ) f(\circ)
\left\|\frac{\nabla_\theta\bar{\pi}(\bullet)}{\bar{\pi}(\bullet)}\right\|^2 \bar{\pi}(\bullet) \rd \theta \rd \eta \notag\\
 &=4\iint   \eta^{(2p+n)/2-1} \mathcal{F}^2(\eta\{\|z-\theta\|^2+1\})
 f(\eta\{\|z-\theta\|^2+1\})\notag\\\
&\qquad\times \eta\|\theta\|^2\left\{\frac{\bar{\pi}'(\eta\|\theta\|^2)}{\bar{\pi}(\eta\|\theta\|^2)}\right\}^2
 \bar{\pi}(\eta\|\theta\|^2) \rd \theta   \rd \eta \notag\\
&\leq 4 \Pi^2
\iint  \eta^{(2p+n)/2-1} \mathcal{F}^2(\eta\{\|z-\theta\|^2+1\})
\notag\ \\
 &\qquad\times f(\eta\{\|z-\theta\|^2+1\})\frac{\bar{\pi}(\eta\|\theta\|^2)}{\eta\|\theta\|^2}
\rd \theta   \rd \eta , \label{eq:delta22}
\end{align}
where
\begin{equation}\label{eq:Pi}
 \Pi=\max_{\lambda\in\mathbb{R}_+} \frac{\lambda|\bar{\pi}'(\lambda)|}{\bar{\pi}(\lambda)}=
  \max_{\lambda\in\mathbb{R}_+}\left|1-\frac{p}{2}+\frac{\lambda\pi'(\lambda)}{\pi(\lambda)}\right|,
\end{equation}
is bounded by Part \ref{lem:pi:1} of Lemma \ref{lem:pi}.
Further, by \eqref{eq:delta22}, we have 
\begin{equation}\label{eq:last.sec:subsub1}
 \begin{split}
 &\frac{1 }{8\Pi^2}\int  \sup_i\Delta_{2i}\rd z \\
 &\leq
 \iiint   \eta^{(p+n)/2-1} \mathcal{F}^2(\eta\{\|z\|^2+1\})
 f(\eta\{\|z\|^2+1\}) 
\frac{\bar{\pi}(\|\mu\|^2)}{\|\mu\|^2} \rd \mu   \rd \eta  \rd z \\
 &=
 c_pA_3 \iint  w^{p/2-1} \eta^{(p+n)/2-1} \mathcal{F}^2(\eta\{w+1\})f(\eta\{w+1\})
   \rd \eta  \rd w \\
 &=
 c_pA_3 \int_0^\infty  \frac{w^{p/2-1} \rd w}{(1+w)^{(p+n)/2}}\int_0^\infty t^{(p+n)/2-1}\mathcal{F}^2(t)f(t) \rd t \\
 &= c_p A_2 A_3 B (p/2,n/2), 
\end{split}
\end{equation}
where $A_2$ given by \eqref{proof.A_2} is bounded and
\begin{align}
 A_3=\int_{\mathbb{R}^p} \frac{\bar{\pi}(\|\mu\|^2)}{\|\mu\|^2} \rd \mu
 =\int_0^\infty \frac{\pi(\lambda)}{\lambda} \rd \lambda\label{proof.A_3}
\end{align}
is also bounded as shown in Parts \ref{lem:pi:2} and \ref{lem:pi:4} of Lemma \ref{lem:pi}.
\subsubsection{$\Delta_{2i}$ under Assumption \ref{AA2} with $-1/2<\alpha\leq 0$}
\label{sec:subsub2}
Let
\begin{align*}
 k(\lambda)=\lambda^{1/2}I_{[0,1]}(\lambda)+ I_{(1,\infty)}(\lambda).
\end{align*}
Note that $0\leq k(\lambda)\leq 1$ and $1/k(\lambda)\geq 1$ for any $\lambda\geq 0$.
Then, by Cauchy-Schwarz inequality, we have
\begin{equation}\label{eq:Del2}
 \begin{split}
& \left\|\iint 
  \eta^{(2p+n)/2-1} F(\circ) 
\nabla_\theta\bar{\pi}(\bullet) \rd \theta   \rd \eta \right\|^2 \\
& \leq
 \iint   \eta^{(2p+n)/2-2} \mathcal{F}^2(\circ) f(\circ)
  k(\bullet) \left\|\frac{\nabla_\theta\bar{\pi}(\bullet)}{\bar{\pi}(\bullet)}\right\|^2
  \bar{\pi}(\bullet) \rd \theta   \rd \eta \\
 &\qquad \times
 \iint   \eta^{(2p+n)/2} f(\circ) \frac{\bar{\pi}(\bullet)}{k(\bullet)} \rd \theta   \rd \eta \\
& \leq
\mathcal{J}_1(f,\pi) \iint   \eta^{(2p+n)/2-2} \mathcal{F}^2(\circ) f(\circ)
  k(\bullet) \left\|\frac{\nabla_\theta\bar{\pi}(\bullet)}{\bar{\pi}(\bullet)}\right\|^2
  \bar{\pi}(\bullet) \rd \theta   \rd \eta \\
 &\qquad \times
 \iint   \eta^{(2p+n)/2} f(\circ) \bar{\pi}(\bullet) \rd \theta   \rd \eta,
\end{split}
\end{equation}
where the second inequality with the constant $\mathcal{J}_1(f,\pi)$ follows
from Lemma \ref{lem:f_around_origin}, provided below in Appendix \ref{sec:finallemmas}.
Similarly, by the Cauchy-Schwarz inequality, we have
\begin{equation}\label{eq:Del2.00}
 \begin{split}
& \left\|\iint 
  \eta^{(2p+n)/2-1} F(\circ) 
\nabla_\theta\bar{\pi}(\bullet) h_i^2(\bullet)\rd \theta   \rd \eta \right\|^2 \\
& \leq
 \iint   \eta^{(2p+n)/2-2} \mathcal{F}^2(\circ) f(\circ)
 k(\bullet)\left\|\frac{\nabla_\theta\bar{\pi}(\bullet)}{\bar{\pi}(\bullet)}\right\|^2 \bar{\pi}(\bullet) h_i^2(\bullet)\rd \theta   \rd \eta \\
 &\qquad \times
 \iint   \eta^{(2p+n)/2} f(\circ) \frac{\bar{\pi}(\bullet)}{k(\bullet)} h_i^2(\bullet)\rd \theta   \rd \eta \\
& \leq
\mathcal{J}_2(f,\pi) \iint   \eta^{(2p+n)/2-2} \mathcal{F}^2(\circ) f(\circ)
  k(\bullet)
  \left\|\frac{\nabla_\theta\bar{\pi}(\bullet)}{\bar{\pi}(\bullet)}\right\|^2
  \bar{\pi}(\bullet) \rd \theta   \rd \eta \\
 &\qquad \times
 \iint   \eta^{(2p+n)/2} f(\circ) \bar{\pi}_i(\bullet) \rd \theta   \rd \eta,
\end{split}
\end{equation}
where the second inequality with the constant $\mathcal{J}_2(f,\pi)$ follows from Lemma \ref{lem:f_around_origin}, provided below in Appendix \ref{sec:finallemmas},
and from the fact $h_i^2\leq 1$.
Hence, by \eqref{eq:nablathetapi}, \eqref{eq:Del2}, \eqref{eq:Del2.00} and with
$\Pi$ 
given by \eqref{eq:Pi}, we have 
\begin{align*}
\frac{\sup_i\Delta_{2i}}{\mathcal{J}_1(f,\pi)+\mathcal{J}_2(f,\pi)}
 &\leq \iint   \eta^{(2p+n)/2-2} \mathcal{F}^2(\circ) f(\circ)
 k(\bullet)
 \left\|\frac{\nabla_\theta\bar{\pi}(\bullet)}{\bar{\pi}(\bullet)}\right\|^2
 \bar{\pi}(\bullet) \rd \theta   \rd \eta \\
 &\leq 4\Pi^2 \iint   \eta^{(2p+n)/2-1} \mathcal{F}^2(\circ) f(\circ)
 k(\bullet)\frac{\bar{\pi}(\bullet)}{\eta\|\theta\|^2} \rd \theta   \rd \eta. 
\end{align*}
Therefore we have
\begin{align}
 &\frac{1 }{4\Pi^2\left\{\mathcal{J}_1(f,\pi)+\mathcal{J}_2(f,\pi)\right\}}
 \int  \sup_i\Delta_{2i}\rd z \notag \\
 &\leq
 \iiint   \eta^{(p+n)/2-1} \mathcal{F}^2(\eta\{\|z\|^2+1\})
 f(\eta\{\|z\|^2+1\}) 
k(\|\mu\|^2)\frac{\bar{\pi}(\|\mu\|^2)}{\|\mu\|^2} \rd \mu   \rd \eta  \rd z \notag \\
 &=
 A_4c_p \iint  w^{p/2-1} \eta^{(p+n)/2-1} \mathcal{F}^2(\eta\{w+1\})f(\eta\{w+1\})
   \rd \eta  \rd w \label{eq:last.sec:subsub2} \\
 &=
 A_4c_p \int  \frac{w^{p/2-1} \rd w}{(1+w)^{(p+n)/2}}\int_0^\infty t^{(p+n)/2-1}\mathcal{F}^2(t)f(t) \rd t \notag\\
 &=
 A_2A_4 c_p B (p/2,n/2), \notag 
\end{align}
where $A_2$ given by \eqref{proof.A_2} is bounded and
\begin{align*}
 A_4&=\int_{\mathbb{R}^p} k(\|\mu\|^2)\frac{\bar{\pi}(\|\mu\|^2)}{\|\mu\|^2} \rd \mu \\
 &=\int_{\|\mu\|\leq 1} \frac{\bar{\pi}(\|\mu\|^2)}{\|\mu\|} \rd \mu
 +\int_{\|\mu\|> 1} \frac{\bar{\pi}(\|\mu\|^2)}{\|\mu\|^2} \rd \mu \\
 &=\int_0^1 \frac{\pi(\lambda)}{\lambda^{1/2}} \rd \lambda
+ \int_1^\infty \frac{\pi(\lambda)}{\lambda} \rd \lambda
\end{align*}
is bounded as shown in Parts \ref{lem:pi:2} and \ref{lem:pi:4} of Lemma \ref{lem:pi}.

The proof of Theorem \ref{thm:mainmain} Case I is thus completed by applying the dominated convergence
theorem to $\ndiff(\delta_\pi,\delta_{\pi i};\pi_i)$ as noted above.

\section{Proof for case II}
\label{sec:caseII}
Recall, under Assumption \ref{AA3.2}, $\pi$ and $\bar{\pi}$ satisfy
\begin{align*}
 \lambda\frac{\pi'(\lambda)}{\pi(\lambda)}=\kappa(\lambda), \text{ and }\lambda\frac{\bar{\pi}'(\lambda)}{\bar{\pi}(\lambda)}=1-\frac{p}{2}+\kappa(\lambda),
\end{align*}
where $\kappa(\lambda)\to 0$ as $\lambda\to\infty$.

With $\kappa(\lambda)$, we have
\begin{equation}\label{eq:nabla_theta_pi}
 \begin{split}
 \nabla_\theta\bar{\pi}(\eta\|\theta\|^2)&=
  2\eta\theta\bar{\pi}'(\eta\|\theta\|^2)\\
 &=2\eta\theta\left\{(1-p/2)\frac{\bar{\pi}(\eta\|\theta\|^2)}{\eta\|\theta\|^2}+
 \frac{\bar{\pi}(\eta\|\theta\|^2)}{\eta\|\theta\|^2}\kappa(\eta\|\theta\|^2)\right\}.
\end{split}
\end{equation}
By Lemma \ref{lem:general_integration_by_parts} and the relationship
\eqref{eq:nabla_theta_pi}, the integral included in \eqref{diff.general}
is rewritten as
\begin{align*}
 & -z^\T\iint   \eta^{(2p+n)/2-1} F(\circ)
 \nabla_\theta\bar{\pi}(\bullet) \rd \theta   \rd \eta \\
 &\quad = \left(\frac{p-2}{n+2}-\frac{n+p}{n+2}\right)
 z^\T\iint   \eta^{(2p+n)/2-1} F(\circ)
 \nabla_\theta\bar{\pi}(\bullet) \rd \theta   \rd \eta \\
 &\quad =\frac{p-2}{n+2}\iint   \eta^{(2p+n)/2} f(\circ)
 \bar{\pi}(\bullet) \rd \theta   \rd \eta \\
&\quad \quad -\frac{(n+p)(p-2)}{n+2}\iint   \eta^{(2p+n)/2-1} F(\circ)
 \bar{\pi}(\bullet) \rd \theta   \rd \eta \\
 &\quad \quad - \frac{n+p}{n+2}z^\T\iint   \eta^{(2p+n)/2-1} F(\circ)
 \nabla_\theta\bar{\pi}(\bullet) \rd \theta   \rd \eta \\
 &\quad =\frac{p-2}{n+2}\iint   \eta^{(2p+n)/2} f(\circ)
 \bar{\pi}(\bullet) \rd \theta   \rd \eta \\
&\quad \quad +\frac{(n+p)(p-2)}{n+2}\iint  \frac{z^\T\theta-\|\theta\|^2}{\|\theta\|^2} \eta^{(2p+n)/2-1} F(\circ)
 \bar{\pi}(\bullet) \rd \theta   \rd \eta \\
&\quad \quad -2\frac{n+p}{n+2}z^\T\iint  \theta\eta^{(2p+n)/2-1} F(\circ)
 \frac{\kappa(\bullet)\bar{\pi}(\bullet)}{\|\theta\|^2} \rd \theta   \rd \eta 
\end{align*}
where, again, with the notation  
\begin{align*}
 \bullet=\eta\|\theta\|^2, \quad \circ=\eta(\|z-\theta\|^2+1).
\end{align*}
Similarly, by Lemma \ref{lem:general_integration_by_parts} and the relationship
\eqref{eq:nabla_theta_pi}, the integral included in \eqref{diff.general}
is rewritten as
\begin{align*}
 & -z^\T\iint   \eta^{(2p+n)/2-1} F(\circ)
 \nabla_\theta\bar{\pi}_i(\bullet) \rd \theta   \rd \eta \\
 &\quad =\frac{p-2}{n+2}\iint   \eta^{(2p+n)/2} f(\circ)
 \bar{\pi}_i(\bullet) \rd \theta   \rd \eta \\
&\quad \quad -\frac{(n+p)(p-2)}{n+2}\iint   \eta^{(2p+n)/2-1} F(\circ)
 \bar{\pi}_i(\bullet) \rd \theta   \rd \eta \\
 &\quad \quad - \frac{n+p}{n+2}z^\T\iint   \eta^{(2p+n)/2-1} F(\circ)
 \nabla_\theta\bar{\pi}_i(\bullet) \rd \theta   \rd \eta \\
 &\quad =\frac{p-2}{n+2}\iint   \eta^{(2p+n)/2} f(\circ)
 \bar{\pi}_i(\bullet) \rd \theta   \rd \eta \\
&\quad \quad +\frac{(n+p)(p-2)}{n+2}\iint  \frac{z^\T\theta-\|\theta\|^2}{\|\theta\|^2} \eta^{(2p+n)/2-1} F(\circ)
 \bar{\pi}_i(\bullet) \rd \theta   \rd \eta \\
&\quad \quad -2\frac{n+p}{n+2}z^\T\iint  \theta\eta^{(2p+n)/2-1} F(\circ)
 \frac{\kappa(\bullet)\bar{\pi}_i(\bullet)}{\|\theta\|^2} \rd \theta   \rd \eta \\
 &\quad \quad - \frac{n+p}{n+2}z^\T\iint   \eta^{(2p+n)/2-1} F(\circ)
 \bar{\pi}(\bullet)\nabla_\theta h_i^2(\bullet) \rd \theta   \rd \eta .
\end{align*}
Then $\oldiff(z;\delta_\pi,\delta_{\pi i};\pi_i)$ given by \eqref{diff.general}
is rewritten as
\begin{align*}
& \oldiff(z;\delta_\pi,\delta_{\pi i};\pi_i) \\ 
 &=\frac{c_n}{\|z\|^2}\frac{(n+p)^2}{(n+2)^2}\left\{
z^\T\frac{\iint   \eta^{(2p+n)/2-1} F(\circ)
 \bar{\pi}(\bullet)\nabla_\theta h_i^2(\bullet) \rd \theta   \rd \eta} 
{\iint  \eta^{(2p+n)/2} f(\circ) \bar{\pi}_i(\bullet) \rd \theta   \rd \eta} \right. \\
&\quad \left. +(p-2) \frac{
 \iint  (z^\T\theta/\|\theta\|^2-1) \eta^{(2p+n)/2-1} F(\circ)
 \bar{\pi}(\bullet) \rd \theta   \rd \eta }
 {\iint  \eta^{(2p+n)/2} f(\circ) \bar{\pi}(\bullet) \rd \theta   \rd \eta} \right. \\
 &\quad \left. -
 (p-2) \frac{
 \iint  (z^\T\theta/\|\theta\|^2-1) \eta^{(2p+n)/2-1} F(\circ)
 \bar{\pi}_i(\bullet) \rd \theta   \rd \eta }
 {\iint  \eta^{(2p+n)/2} f(\circ) \bar{\pi}_i(\bullet) \rd \theta   \rd \eta} \right. \\
& \quad \left. -
2\frac{z^\T\iint  \theta\eta^{(2p+n)/2-1} F(\circ)
 \{\kappa(\bullet)\bar{\pi}(\bullet)/\|\theta\|^2\} \rd \theta   \rd \eta}
 {\iint  \eta^{(2p+n)/2} f(\circ) \bar{\pi}(\bullet) \rd \theta   \rd \eta} \right. \\
& \quad \left. +
2\frac{z^\T\iint  \theta\eta^{(2p+n)/2-1} F(\circ)
 \{\kappa(\bullet)\bar{\pi}_i(\bullet)/\|\theta\|^2\} \rd \theta   \rd \eta}
 {\iint  \eta^{(2p+n)/2} f(\circ) \bar{\pi}_i(\bullet) \rd \theta   \rd \eta} 
\right\}^2 \\
&\qquad\times \iint\eta^{(2p+n)/2} f(\circ) \bar{\pi}_i(\bullet) \rd \theta   \rd \eta .
\end{align*}
By the triangle inequality and the fact $h_i^2\leq 1$,
\begin{align*}
 \oldiff(z;\delta_\pi,\delta_{\pi i};\pi_i)  
\leq 2\frac{c_n(n+p)^2}{(n+2)^2}\left\{\Delta_{1i}+(p-2)^2\Delta_{3i}
 +4\Delta_{4i}\right\},
\end{align*}
where 
\begin{align}
 \Delta_{1i}
& =
\frac{\left\| \iint   \eta^{(2p+n)/2-1} F(\circ) 
\bar{\pi}(\bullet)\nabla_\theta h_i^2(\bullet) \rd \theta   \rd \eta\right\|^2}{\iint\eta^{(2p+n)/2} f(\circ) 
\bar{\pi}_i(\bullet) \rd \theta   \rd \eta }, \notag \\
 \Delta_{3i}
& =
 \frac{1}{\|z\|^2}
 \frac{\left\{\iint 
(z^\T\theta/\|\theta\|^2-1) \eta^{(2p+n)/2-1} F(\circ) 
\bar{\pi}(\bullet) \rd \theta   \rd \eta\right\}^2}{\iint \eta^{(2p+n)/2} f(\circ) 
 \bar{\pi}(\bullet) \rd \theta   \rd \eta} \notag\\ 
& \quad +
 \frac{1}{\|z\|^2}
 \frac{\left\{\iint 
(z^\T\theta/\|\theta\|^2-1) \eta^{(2p+n)/2-1} F(\circ) 
\bar{\pi}_i(\bullet) \rd \theta   \rd \eta\right\}^2}{\iint \eta^{(2p+n)/2} f(\circ) 
 \bar{\pi}_i(\bullet) \rd \theta   \rd \eta}, \label{eq:deldel5}\\
 \Delta_{4i}
 &=
 \frac{\left\| \iint  \theta\eta^{(2p+n)/2-1} F(\circ) \kappa(\bullet)\bar{\pi}(\bullet)\|\theta\|^{-2} \rd \theta  \rd \eta\right\|^2}{\iint\eta^{(2p+n)/2} f(\circ) 
\bar{\pi}(\bullet) \rd \theta   \rd \eta } \notag\\ 
 &\quad +
 \frac{\left\| \iint  \theta\eta^{(2p+n)/2-1} F(\circ) \kappa(\bullet)\bar{\pi}_i(\bullet)\|\theta\|^{-2} \rd \theta  \rd \eta\right\|^2}{\iint\eta^{(2p+n)/2} f(\circ) 
\bar{\pi}_i(\bullet) \rd \theta   \rd \eta }. \label{eq:deldel7}
\end{align}
For $\Delta_{1i}$, as seen in Section \ref{sec:del1},
we have $\int \sup_i\Delta_{1i}\rd z<\infty$.
We will show integrability $\int \sup_i\Delta_{3i}\rd z<\infty$ 
and integrability $\int \sup_i\Delta_{4i}\rd z<\infty$ in Sub-sections
\ref{sec:delta45} and \ref{sec:delta67}, respectively.
 
  \subsection{$\Delta_{3i}$}
\label{sec:delta45}
  Note the inequality
 \begin{align*}
\left|\frac{z^\T\theta}{\|\theta\|^2} -1 \right|
=\left|\frac{(z-\theta)^\T\theta}{\|\theta\|^2}  \right|
\leq \frac{\|z-\theta\|}{\|\theta\|} 
\leq \frac{\sqrt{\|z-\theta\|^2+1}}{\|\theta\|}.
 \end{align*}
Then, in the first and second terms of \eqref{eq:deldel5}, we have
\begin{align*}
  &\iint  \left|\frac{z^\T\theta-\|\theta\|^2}{\|\theta\|^2}\right| \eta^{(2p+n)/2-1} F(\eta\{\|z-\theta\|^2+1\})
 \bar{\pi}(\eta\|\theta\|^2) \rd \theta   \rd \eta \\
 &\leq
 \iint \frac{\eta^{(2p+n)/2-1}}{\eta^{1/2}\|\theta\|}
\{\eta(\|z-\theta\|^2+1)\}^{1/2}F(\eta\{\|z-\theta\|^2+1\})
 \bar{\pi}(\eta\|\theta\|^2) \rd \theta   \rd \eta \\
 &=
 \iint \frac{\eta^{(2p+n)/2-1}}{\eta^{1/2}\|\theta\|}
 \tilde{\mathcal{F}}(\eta\{\|z-\theta\|^2+1\})f(\eta\{\|z-\theta\|^2+1\})
 \bar{\pi}(\eta\|\theta\|^2) \rd \theta   \rd \eta
\end{align*}
where
\begin{align*}
 \tilde{\mathcal{F}}(t)=t^{1/2}\mathcal{F}(t)=t^{1/2}\frac{F(t)}{f(t)}
\end{align*}
and
\begin{align*}
  &\iint  \left|\frac{z^\T\theta-\|\theta\|^2}{\|\theta\|^2}\right| \eta^{(2p+n)/2-1} F(\eta\{\|z-\theta\|^2+1\})
 \bar{\pi}_i(\eta\|\theta\|^2) \rd \theta   \rd \eta \\
 &\leq
 \iint \frac{\eta^{(2p+n)/2-1}}{\eta^{1/2}\|\theta\|}
 \tilde{\mathcal{F}}(\eta\{\|z-\theta\|^2+1\}) f(\eta\{\|z-\theta\|^2+1\})
 \bar{\pi}_i(\eta\|\theta\|^2) \rd \theta   \rd \eta.
\end{align*}
Under Assumption \ref{AA2} on $\pi$ with $\alpha>0$,
applying the same technique used in Sub-Section \ref{sec:subsub1},
the integrability of
\begin{align*}
B_1 & =\iiint \frac{\eta^{(2p+n)/2-2}}{\|z\|^2}   \tilde{\mathcal{F}}^2(\eta\{\|z-\theta\|^2+1\})
 f(\eta\{\|z-\theta\|^2+1\}) \\
&\qquad \times \frac{\bar{\pi}(\eta\|\theta\|^2)}{\eta\|\theta\|^2}
 \rd \theta   \rd \eta  \rd z 
\end{align*}
implies the integrability of $ \int  \sup_i\Delta_{3i}\rd z$. 
The integrability of $B_1$ is shown as follows;
\begin{align}
B_1 &=
 \iiint \frac{\eta^{n/2-1}}{\|y\|^2}   \tilde{\mathcal{F}}^2(\|y-\mu\|^2+\eta)
 f(\|y-\mu\|^2+\eta)
 \frac{\bar{\pi}(\|\mu\|^2)}{\|\mu\|^2}
  \rd \mu   \rd \eta  \rd y \notag\\
 &=
 \int_{\mathbb{R}^p}\left(\int_{\mathbb{R}^p}\frac{f_\star(\|y-\mu\|^2)}{\|y\|^2}\rd y\right)
  \frac{\bar{\pi}(\|\mu\|^2)}{\|\mu\|^2} \rd \mu \notag\\
 &\leq \mathcal{Q}_f
 \int_{\mathbb{R}^p}
\min(1,\|\mu\|^{-2})\frac{\bar{\pi}(\|\mu\|^2)}{\|\mu\|^2} \rd \mu \label{B1B1}\\
 &= \mathcal{Q}_f
 \left\{\int_{\|\mu\|\leq 1}\frac{\bar{\pi}(\|\mu\|^2)}{\|\mu\|^2} \rd \mu +
 \int_{\|\mu\|> 1}\frac{\bar{\pi}(\|\mu\|^2)}{\|\mu\|^4} \rd \mu \right\} \notag\\
 &= \mathcal{Q}_f
 \left\{\int_0^1 \frac{\pi(\lambda)}{\lambda} \rd \lambda +
\int_1^\infty \frac{\pi(\lambda)}{\lambda^2} \rd \lambda  \right\} <\infty \notag
\end{align}
where
 \begin{align*}
   f_\star(t)=\int_0^\infty \eta^{n/2-1}\tilde{\mathcal{F}}^2(t+\eta)f(t+\eta)\rd \eta,
 \end{align*}
 the inequality with $\mathcal{Q}_f$ follows from Part \ref{lem:f.2} of Lemma \ref{lem:f}
and the integrability of the right-hand side follows from
 Parts \ref{lem:pi:3} and \ref{lem:pi:5} of Lemma \ref{lem:pi}.

Under Assumption \ref{AA2} on $\pi$ with $-1/2<\alpha\leq 0$,
applying the same technique used in Sub-Section \ref{sec:subsub2},
the integrability of
\begin{align*}
B_2&= \iiint \frac{\eta^{(2p+n)/2-2}}{\|z\|^2}   \tilde{\mathcal{F}}^2(\eta\{\|z-\theta\|^2+1\})
 f(\eta\{\|z-\theta\|^2+1\}) \\
&\qquad \times \frac{k(\eta\|\theta\|^2)\bar{\pi}(\eta\|\theta\|^2)}{\eta\|\theta\|^2}
 \rd \theta   \rd \eta  \rd z 
\end{align*}
where $k(\lambda)=\lambda^{1/2}I_{[0,1]}(\lambda) + I_{(1,\infty)}(\lambda)$,
implies the integrability of $\int  \sup_i\Delta_{3i}\rd z$.
As in \eqref{B1B1}, $B_2$ is given by
\begin{align*}
B_2  &\leq \mathcal{Q}_f
 \int_{\mathbb{R}^p}
\min(1,\|\mu\|^{-2})\frac{k(\|\mu\|^2)\bar{\pi}(\|\mu\|^2)}{\|\mu\|^2} \rd \mu \\
 &= \mathcal{Q}_f
 \left\{
\int_0^1 \frac{\pi(\lambda)}{\lambda^{1/2}} \rd \lambda +
\int_1^\infty \frac{\pi(\lambda)}{\lambda^2} \rd \lambda 
 \right\} <\infty
\end{align*}
which is bounded by Parts \ref{lem:pi:2} and \ref{lem:pi:5} of Lemma \ref{lem:pi}.

  \subsection{$\Delta_{4i}$}
\label{sec:delta67}
Under Assumption \ref{AA2} on $\pi$ with $\alpha>0$,
applying the same technique used in Sub-Section \ref{sec:subsub1},
the integrability of
\begin{align*}
B_3 & =\iiint \eta^{(2p+n)/2-1}  \mathcal{F}^2(\eta\{\|z-\theta\|^2+1\})
 f(\eta\{\|z-\theta\|^2+1\}) \\
&\qquad \times \frac{\kappa^2(\eta\|\theta\|^2)\bar{\pi}(\eta\|\theta\|^2)}{\eta\|\theta\|^2}
 \rd \theta   \rd \eta  \rd z 
\end{align*}
implies the integrability of $ \int  \sup_i\Delta_{4i}\rd z$. 
The integrability of $B_3$ is shown as follows;
\begin{align*}
B_3 & =c_pA_2B(p/2,n/2)\int 
\frac{\kappa^2(\|\mu\|^2)\bar{\pi}(\|\mu\|^2)}{\|\mu\|^2}
 \rd \mu \\
 &=c_pA_2B(p/2,n/2)\left\{
\sup_{\lambda\in(0,1)}\kappa^2(\lambda)\int_0^1 \frac{\pi(\lambda)}{\lambda} \rd \mu 
 +\int_1^\infty \frac{\pi(\lambda)\kappa^2(\lambda)}{\lambda} \rd \mu
 \right\},
\end{align*}
where $A_2$ is given by \eqref{proof.A_2},
the first term is bounded by Parts \ref{lem:pi:1} and \ref{lem:pi:3} of Lemma \ref{lem:pi}
and the second term is bounded by Part \ref{lem:pi:8} of Lemma \ref{lem:pi}.

Under Assumption \ref{AA2} on $\pi$ with $-1/2<\alpha\leq 0$,
applying the same technique used in Sub-Section \ref{sec:subsub2}
the integrability of
\begin{align*}
B_4 & =\iiint \eta^{(2p+n)/2-1}  \mathcal{F}^2(\eta\{\|z-\theta\|^2+1\})
 f(\eta\{\|z-\theta\|^2+1\}) \\
 &\qquad \times
 \frac{k(\eta\|\theta\|^2)\kappa^2(\eta\|\theta\|^2)\bar{\pi}(\eta\|\theta\|^2)}{\eta\|\theta\|^2}
 \rd \theta   \rd \eta  \rd z 
\end{align*}
where $k(\lambda)=\lambda^{1/2}I_{[0,1]}(\lambda) + I_{(1,\infty)}(\lambda)$
implies the integrability of $\int  \sup_i\Delta_{4i}\rd z$.
The integrability of $B_3$ is shown as follows;
\begin{align*}
B_4 & =c_pA_2B(p/2,n/2)\int 
\frac{k(\|\mu\|^2)\kappa^2(\|\mu\|^2)\bar{\pi}(\|\mu\|^2)}{\|\mu\|^2}
 \rd \mu \\
 &=c_pA_2B(p/2,n/2)\left\{
\sup_{\lambda\in(0,1)}\kappa^2(\lambda)\int_0^1 \frac{\pi(\lambda)}{\lambda^{1/2}} \rd \mu 
 +\int_1^\infty \frac{\pi(\lambda)\kappa^2(\lambda)}{\lambda} \rd \mu \right\},
\end{align*}
where $A_2$ is given by \eqref{proof.A_2},
the first term is bounded by Parts \ref{lem:pi:1} and \ref{lem:pi:2} of Lemma \ref{lem:pi}
and the second term is bounded by Part \ref{lem:pi:8} of Lemma \ref{lem:pi}.

  \section{Additional Lemmas used in Sections \ref{sec:caseI} and \ref{sec:caseII}}
\label{sec:finallemmas}
  Let
   \begin{align}
    &J(f,\pi,z)\label{eq:Mfpi}\\
& =   \frac{\iint_{\eta\|\theta\|^2\leq 1} 
  \eta^{(2p+n)/2} \{\eta\|\theta\|^2\}^{-1/2}\bar{\pi}(\eta\|\theta\|^2) f(\eta\{\|z-\theta\|^2+1\}) 
 \rd \theta   \rd \eta}{\iint_{\eta\|\theta\|^2\leq 1}  
  \eta^{(2p+n)/2} \bar{\pi}(\eta\|\theta\|^2) f(\eta\{\|z-\theta\|^2+1\}) 
  \rd \theta   \rd \eta}.  \notag
   \end{align}
Then we have a following result.
  \begin{lemma}\label{lem:f_around_origin.0}
   Suppose Assumptions \ref{FF1}--\ref{FF3} on $f$ hold.
   Assume Assumptions \ref{AA2} on $\pi$ with $-1/2<\alpha\leq 0$. Then
\begin{equation}\label{eq:lem:f_around_origin.0}
J(f,\pi,z) \leq \mathcal{J}(f,\pi)<\infty
\end{equation} 
for any $z\in\mathbb{R}^p$, where
\begin{equation}\label{eq:mathcalMfpi}
 \begin{split}
  \mathcal{J}(f,\pi)&=2\frac{\alpha+1}{\alpha+1/2}
\frac{\max_{a\in\mathbb{R}_+}\varphi(a;\alpha+1/2)}
  {\min_{a\in\mathbb{R}_+}\varphi(a;\alpha+1)}
  \frac{\max_{\lambda\in[0,1]}\nu(\lambda)}{\min_{\lambda\in[0,1]}\nu(\lambda)}, \\
  \varphi(a;\gamma)&=\frac{\int_0^a  t^{(p+n)/2+\gamma} f(t)  \rd t }{\int_0^a  t^{(p+n)/2+\gamma} f_G(t)  \rd t }, \\
  f_G(t)&=(2\pi)^{-(p+n)/2}\exp(-t/2).
 \end{split}
\end{equation} 
  \end{lemma}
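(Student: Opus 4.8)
The plan is to bound $J(f,\pi,z)$ by a product of three $z$-free factors matching the three factors in $\mathcal{J}(f,\pi)$: one coming from $\nu$, one from replacing $f$ by the Gaussian $f_G$, and one from an explicit Gaussian computation. Throughout I write $\gamma_1=\alpha+1/2$ and $\gamma_2=\alpha+1$, so $\gamma_2-\gamma_1=1/2$, and abbreviate $\bullet=\eta\|\theta\|^2$, $\circ=\eta(\|z-\theta\|^2+1)$. First I would use Assumption \ref{AA2}: on the region $\bullet\in[0,1]$ we have $\bar{\pi}(\bullet)=c_p^{-1}\bullet^{1-p/2}\pi(\bullet)=c_p^{-1}\bullet^{\gamma_2-p/2}\nu(\bullet)$, and the extra factor $\bullet^{-1/2}$ in the numerator of $J$ turns this into $c_p^{-1}\bullet^{\gamma_1-p/2}\nu(\bullet)$ (since $\gamma_2-1/2=\gamma_1$). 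Bounding $\nu(\bullet)\le\max_{\lambda\in[0,1]}\nu(\lambda)$ in the numerator and $\nu(\bullet)\ge\min_{\lambda\in[0,1]}\nu(\lambda)$ in the denominator (both positive and finite, by $0<\nu(0)<\infty$ and continuity) extracts the factor $\max_{[0,1]}\nu/\min_{[0,1]}\nu$ and reduces matters to the power prior, i.e.\ to bounding
\[
K=\frac{\iint_{\bullet\le1}\eta^{(2p+n)/2}\bullet^{\gamma_1-p/2}f(\circ)\,\rd\theta\,\rd\eta}{\iint_{\bullet\le1}\eta^{(2p+n)/2}\bullet^{\gamma_2-p/2}f(\circ)\,\rd\theta\,\rd\eta}.
\]

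Next I would pass to $f_G$. Substituting $\eta=\lambda/\|\theta\|^2$ (so $\lambda=\bullet$) and then $t=\lambda s$ with $s=s(\theta)=(\|z-\theta\|^2+1)/\|\theta\|^2$, each integral becomes $\int_{\mathbb{R}^p}\|\theta\|^{-(2p+n+2)}s^{-(p+n)/2-\gamma_j-1}\big(\int_0^{s}t^{(p+n)/2+\gamma_j}f(t)\,\rd t\big)\rd\theta$, and the inner $t$-integral equals $\varphi(s;\gamma_j)$ times its Gaussian analogue. Bounding $\varphi(s;\gamma_1)\le\max_a\varphi(a;\gamma_1)$ in the numerator and $\varphi(s;\gamma_2)\ge\min_a\varphi(a;\gamma_2)$ in the denominator removes all dependence of the $\varphi$-factors on the awkward $z$-dependent point $s$; here one checks, using Assumptions \ref{FF1}--\ref{FF3}, that each $\varphi(\cdot;\gamma_j)$ is continuous and positive with finite positive limit $f(0)/f_G(0)$ at $0$ and a finite ratio of full moments at $\infty$, so the max and min over $\mathbb{R}_+$ are attained and strictly positive. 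Re-assembling the remaining Gaussian $t$-integrals back into $(\theta,\eta)$ form gives $K\le\{\max_a\varphi(a;\gamma_1)/\min_a\varphi(a;\gamma_2)\}\,K_G$, where $K_G$ is the same ratio as $K$ but with $f$ replaced by $f_G$.

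It then remains to prove the purely Gaussian bound $K_G\le 2(\alpha+1)/(\alpha+1/2)$ uniformly in $z$, and this is the main obstacle. Here I would substitute $\phi=\sqrt{\eta}\,\theta$, which sends $\bullet\le1$ to the fixed ball $\|\phi\|\le1$ and, writing $\Psi_G(\phi,z)=\int_0^\infty\eta^{(p+n)/2}f_G(\eta(\|z\|^2+1)-2\sqrt{\eta}\,\phi^\T z+\|\phi\|^2)\,\rd\eta$, expresses $K_G$ as a weighted average of $\|\phi\|^{-1}$ over $\|\phi\|\le1$ (since $\bullet^{\gamma_1-p/2}=\bullet^{\gamma_2-p/2}\bullet^{-1/2}$ becomes $\|\phi\|^{2\gamma_2-p}\|\phi\|^{-1}$), with weight $\|\phi\|^{2\gamma_2-p}\Psi_G(\phi,z)$. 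Were the weight a pure power, polar coordinates would give the exact value $\int_0^1 r^{2\gamma_2-2}\rd r/\int_0^1 r^{2\gamma_2-1}\rd r=2\gamma_2/(2\gamma_2-1)=(\alpha+1)/(\alpha+1/2)$, finite precisely because $\alpha>-1/2$ makes $r^{2\alpha}$ integrable at the origin. The remaining factor $2$ must absorb the genuine variation of $\Psi_G(\phi,z)$ across the ball, and the delicate point is that this variation be bounded \emph{uniformly in} $z$: symmetrizing in $\phi\mapsto-\phi$ (legitimate since the ball is symmetric) cancels the cross term and leaves a weight proportional to $e^{-\|\phi\|^2/2}\int_0^\infty u^{p+n+1}e^{-\frac{\|z\|^2+1}{2}u^2}\cosh(u\,\phi^\T z)\,\rd u$. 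Because the inner Gaussian forces the effective scale $u\sqrt{\|z\|^2+1}=O(1)$ while $|\phi^\T z|\le\|z\|\le\sqrt{\|z\|^2+1}$ on the ball, the hyperbolic-cosine factor stays bounded independently of $z$; together with $e^{-\|\phi\|^2/2}\in[e^{-1/2},1]$ this bounds the ratio of the sup and inf of the weight over $\|\phi\|\le1$ by a $z$-free constant, which careful estimation pins at $2$. Combining the three factors yields $J(f,\pi,z)\le\mathcal{J}(f,\pi)$, and finiteness of $\mathcal{J}(f,\pi)$ follows from the boundedness of each factor, completing the proof.
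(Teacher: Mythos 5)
Your first two reductions are exactly the paper's: extracting $\max_{[0,1]}\nu/\min_{[0,1]}\nu$ via Assumption \ref{AA2}, then the change of variables $t=\eta(\|z-\theta\|^2+1)$ on the inner $\eta$-integral to pull out $\max_a\varphi(a;\alpha+1/2)/\min_a\varphi(a;\alpha+1)$ and reduce to the Gaussian ratio $J_2$. The gap is in your final Gaussian step. You bound the ratio of weighted averages by $\bigl(\sup_{\|\phi\|\le1} w/\inf_{\|\phi\|\le1} w\bigr)\cdot\frac{\alpha+1}{\alpha+1/2}$ and assert that the oscillation factor is $2$. It is not. After symmetrization the $z$-dependent part of the weight is, up to a constant, $\int_0^\infty v^{p+n+1}e^{-v^2/2}\cosh(vc)\,\rd v$ with $c=\phi^\T z/\sqrt{\|z\|^2+1}$ ranging over $[-\|z\|/\sqrt{\|z\|^2+1},\|z\|/\sqrt{\|z\|^2+1}]$, which approaches $[-1,1]$ as $\|z\|\to\infty$. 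Your heuristic that the Gaussian forces the effective scale $v=O(1)$ ignores the polynomial $v^{p+n+1}$, which concentrates $v$ near $\sqrt{p+n+1}$; consequently $\cosh(vc)$ at $|c|$ near $1$ is of order $e^{\sqrt{p+n}}$ relative to $c=0$. Even in the smallest case $p=3$, $n=2$ one computes $\int_0^\infty v^6e^{-v^2/2}\cosh(v)\,\rd v\big/\int_0^\infty v^6e^{-v^2/2}\,\rd v\approx 8.6$, so the sup/inf ratio of your weight exceeds $14$, not $2$, and the claimed constant $\mathcal{J}(f,\pi)$ is not reached by this route.

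The missing idea is that the large oscillation of the weight is \emph{favorably correlated} with $\|\phi\|$: the weight is largest where $\|\phi\|$ is large, i.e.\ where $\|\phi\|^{-1}$ is small, so a crude $\sup/\inf$ bound discards exactly the cancellation you need. The paper exploits this by writing $\|\mu\|^2$ (your $\|\phi\|^2$) as a noncentral chi-square, expanding in the Poisson mixture $\sum_j\tilde a_j(\|z\|^2)$, and reducing $J_2$ to $\sum_j\tilde a_jE[R^{j-1/2}]/\sum_j\tilde a_jE[R^{j}]$ with $R$ having density $\propto r^{\alpha}e^{-r/2}$ on $[0,1]$. The correlation inequality $E[R^{-1/2}]\ge E[R^{1/2}]/E[R]\ge E[R^{3/2}]/E[R^2]\ge\cdots$ then bounds this by $E[R^{-1/2}]$ with no loss from the noncentrality, and the factor $2$ in $\mathcal{J}(f,\pi)$ comes only from $e^{-r/2}\in(1/2,1]$ on $[0,1]$ in estimating $E[R^{-1/2}]\le 2(\alpha+1)/(\alpha+1/2)$. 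You would need to replace your $\sup/\inf$ step by an argument of this monotone-correlation type to complete the proof.
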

\begin{proof}
 By Assumptions \ref{AA2} on $\pi$,
\begin{equation}\label{MM1}
 J(f,\pi,z)\leq \frac{\max_{\lambda\in[0,1]}\nu(\lambda)}{\min_{\lambda\in[0,1]}\nu(\lambda)}
  J_1(f,\pi,z)
\end{equation}
 where
 \begin{align}
& J_1(f,\pi,z) \label{def:M1}\\
&= \frac{\iint_{\eta\|\theta\|^2\leq 1} 
  \eta^{(2p+n)/2} \{\eta\|\theta\|^2\}^{\alpha+(1-p)/2} f(\eta\{\|z-\theta\|^2+1\}) 
 \rd \theta   \rd \eta}{\iint_{\eta\|\theta\|^2\leq 1}  
  \eta^{(2p+n)/2} \{\eta\|\theta\|^2\}^{\alpha+(2-p)/2} f(\eta\{\|z-\theta\|^2+1\}) 
 \rd \theta   \rd \eta}\notag \\
  &=
  \frac{\int_{\mathbb{R}^p} \|\theta\|^{2\alpha+1-p}
  \left\{\int_0^{1/\|\theta\|^2}\eta^{(p+n+1)/2+\alpha} f(\eta\{\|z-\theta\|^2+1\}) \rd \eta\right\}
  \rd \theta   }
  {\int_{\mathbb{R}^p} \|\theta\|^{2\alpha+2-p}
  \left\{\int_0^{1/\|\theta\|^2}\eta^{(p+n+2)/2+\alpha} f(\eta\{\|z-\theta\|^2+1\}) \rd \eta\right\}
  \rd \theta }.\notag
 \end{align}
 Let $\gamma=\alpha+1$ for the denominator and $\alpha+1/2$ for the numerator
 of $J_1(f,\pi,z)$.
 By change of variables, the integral with respect to $\eta$ is rewritten as
\begin{align*}
& \int_0^{1/\|\theta\|^2} 
  \eta^{(p+n)/2+\gamma} f(\eta\{\|z-\theta\|^2+1\}) 
 \rd \eta \\
& =\{\|z-\theta\|^2+1\}^{-(p+n)/2-1-\gamma}\int_0^a
  t^{(p+n)/2+\gamma} f(t)  \rd t \\
 & =\varphi(a;\gamma)
\int_0^{1/\|\theta\|^2} 
  \eta^{(p+n)/2+\gamma}  f_G(\eta\{\|z-\theta\|^2+1\})  \rd \eta
\end{align*} 
 where $\varphi(a;\gamma)$ is defined by \eqref{eq:mathcalMfpi} and $a=\{\|z-\theta\|^2+1\}/\|\theta\|^2$.
 Note
\begin{align*}
 \lim_{a\to 0}\varphi(a;\gamma)=\frac{f(0)}{f_G(0)}\text{ and }\lim_{a\to \infty}
 \varphi(a;\gamma)=
\frac{\int_0^\infty  t^{(p+n)/2+\gamma} f(t)  \rd t }{\int_0^\infty  t^{(p+n)/2+\gamma} f_G(t)  \rd t },
\end{align*}
 which are both positive and bounded from the above under $0<\gamma\leq 1$ and
 under Assumptions \ref{FF1}--\ref{FF3} on $f$
and hence
 \begin{align*}
  \min_{a\in\mathbb{R}_+}\varphi(a;\gamma)>0
  \text{ and } \max_{a\in\mathbb{R}_+}\varphi(a;\gamma)<\infty,
 \end{align*}
under $0<\gamma\leq 1$. Therefore we have
 \begin{equation}\label{M1M2}
   J_1(f,\pi,z)
   \leq \frac{\max_{a\in\mathbb{R}_+}\varphi(a;\alpha+1/2)}
  {\min_{a\in\mathbb{R}_+}\varphi(a;\alpha+1)}J_2(f,\pi,z)
 \end{equation}
 where
\begin{align}
& J_2(f,\pi,z) \label{def:M2}\\ &= \frac{\iint_{\eta\|\theta\|^2\leq 1} 
  \eta^{(2p+n)/2} \{\eta\|\theta\|^2\}^{\alpha+(1-p)/2} f_G(\eta\{\|z-\theta\|^2+1\}) 
 \rd \theta   \rd \eta}{\iint_{\eta\|\theta\|^2\leq 1}  
  \eta^{(2p+n)/2} \{\eta\|\theta\|^2\}^{\alpha+(2-p)/2} f_G(\eta\{\|z-\theta\|^2+1\}) 
  \rd \theta   \rd \eta} \notag \\
&=\frac{\iint_{\|\mu\|^2\leq 1} 
  \eta^{(p+n)/2} \{\|\mu\|^2\}^{\alpha+(1-p)/2} \exp(-\|\eta^{1/2}z-\mu\|^2/2-\eta/2)
 \rd \mu  \rd \eta}
{\iint_{\|\mu\|^2\leq 1} 
  \eta^{(p+n)/2} \{\|\mu\|^2\}^{\alpha+(2-p)/2} \exp(-\|\eta^{1/2}z-\mu\|^2/2-\eta/2)
 \rd \mu  \rd \eta}.\notag
\end{align}
Note $\| \mu\|^2$ may be regarded as a non-central chi-square random variable
 with $p$ degrees of freedom and $\eta\|  z \|^2$ non-centrality parameter.
For 
 \begin{align*}
 a_j( \eta\|z\|^2 )=\frac{1}{\Gamma(p/2+j)2^{p/2+j}}\frac{(\eta\|  z \|^2/2)^j}{j!}\exp(-\eta\|  z \|^2/2),
 \end{align*}
 we have
 \begin{align}
&J_2(f,\pi,z) \notag\\
  &= \frac{\sum_{j=0}^\infty \int_0^\infty \eta^{(p+n)/2} a_j( \eta\|z\|^2 )\exp(-\eta/2)\rd \eta\int_0^1 r^{\alpha-1/2+j}\exp(-r/2) \rd r}
  {\sum_{j=0}^\infty \int_0^\infty \eta^{(p+n)/2} a_j( \eta\|z\|^2 )\exp(-\eta/2)\rd \eta\int_0^1
  r^{\alpha+j}\exp(-r/2) \rd r} \notag\\ 
  &=\frac{\sum_{j=0}^\infty \tilde{a}_j(\|z\|^2) E[R^{j-1/2}]}
  {\sum_{j=0}^\infty \tilde{a}_j(\|z\|^2) E[R^{j}]},\notag
 \end{align}
where the expected value is taken under the probability density given by
\begin{equation*}
 \frac{r^{\alpha}\exp(-r/2)I_{[0,1]}(r)}{\int_0^1r^{\alpha}\exp(-r/2) \rd r}
\end{equation*}
 and
\begin{align*}
 \tilde{a}_j(\|z\|^2)&=\int_0^\infty \eta^{(p+n)/2} a_j( \eta\|z\|^2 )\exp(-\eta/2)\rd \eta \\
 &=\frac{\Gamma((p+n)/2+j+1)2^{(p+n)/2+j+1}}{\Gamma(p/2+j)2^{p/2+j}}\frac{(\|  z \|^2/2)^j}{j!(\|  z \|^2+1)^{(p+n)/2+j+1}}.
\end{align*} 
 Since the correlation inequality gives
 \begin{align*}
 E[R^{-1/2}]\geq \frac{E[R^{1/2}]}{E[R]} \geq \frac{E[R^{3/2}]}{E[R^2]}\geq \dots,
 \end{align*}
we have
\begin{equation*}
\frac{\sum_{j=0}^\infty \tilde{a}_j(\|z\|^2) E[R^{j-1/2}]}
  {\sum_{j=0}^\infty \tilde{a}_j(\|z\|^2) E[R^{j}]}
  \leq E[R^{-1/2}]
  = \frac{\int_0^1r^{\alpha-1/2}\exp(-r/2) \rd r}{\int_0^1r^{\alpha}\exp(-r/2) \rd r}.
\end{equation*}
 For $0\leq r\leq 1$, we have
\begin{gather*}
1/2< \exp(-1/2)\leq \exp(-r/2)\leq 1, \\
 E[R^{-1/2}]\leq 2\frac{\alpha+1}{\alpha+1/2},
\end{gather*}
 and hence
\begin{equation}\label{M2C}
 J_2(f,\pi,z)\leq 2\frac{\alpha+1}{\alpha+1/2}, \text{ for any }z\in\mathbb{R}^p.
\end{equation}
 Finally, by \eqref{MM1}, \eqref{def:M1}, \eqref{M1M2}, \eqref{def:M2} and \eqref{M2C},
 we have
 \begin{align*}
J(f,\pi,z)\leq
 2\frac{\alpha+1}{\alpha+1/2}
\frac{\max_{a\in\mathbb{R}_+}\varphi(a;\alpha+1/2)}
  {\min_{a\in\mathbb{R}_+}\varphi(a;\alpha+1)}
  \frac{\max_{\lambda\in[0,1]}\nu(\lambda)}{\min_{\lambda\in[0,1]}\nu(\lambda)}.
 \end{align*}
\end{proof}

Using Lemma \ref{lem:f_around_origin.0}, we have the following result.
\begin{lemma}\label{lem:f_around_origin}
   Suppose Assumptions \ref{FF1}--\ref{FF3} on $f$ hold.
 Assume Assumptions \ref{AA2} on $\pi$ with $-1/2<\alpha\leq 0$.
 Let 
\begin{align*}
 k(\lambda)=\lambda^{1/2}I_{[0,1]}(\lambda)+ I_{(1,\infty)}(\lambda).
\end{align*}
\begin{enumerate}
 \item\label{lem:f_around_origin.part.1} Then 
\begin{equation}\label{eq:lem:f_around_origin.part.1}
 \begin{split}
  & \frac{ \iint   \eta^{(2p+n)/2} f(\eta\{\|z-\theta\|^2+1\})
  \{\bar{\pi}(\eta\|\theta\|^2)/k(\eta\|\theta\|^2)\} \rd \theta   \rd \eta}
 {\iint  \eta^{(2p+n)/2} f(\eta\{\|z-\theta\|^2+1\}) \bar{\pi}(\eta\|\theta\|^2) \rd \theta   \rd \eta}\\
&\leq \mathcal{J}_1(f,\pi),
 \end{split}
\end{equation} 
 where 
\begin{equation}
\mathcal{J}_1(f,\pi)=\mathcal{J}(f,\pi)+1
\end{equation}
and $\mathcal{J}(f,\pi)$ is given by \eqref{eq:mathcalMfpi} of Lemma \ref{lem:f_around_origin.0}.
 \item\label{lem:f_around_origin.part.2}  We have
\begin{equation}\label{eq:lem:f_around_origin.part.2}
 \begin{split}
  & \frac{ \iint   \eta^{(2p+n)/2} f(\eta\{\|z-\theta\|^2+1\})
  \{\bar{\pi}_i(\eta\|\theta\|^2)/k(\eta\|\theta\|^2)\} \rd \theta   \rd \eta}
 {\iint  \eta^{(2p+n)/2} f(\eta\{\|z-\theta\|^2+1\}) \bar{\pi}_i(\eta\|\theta\|^2) \rd \theta   \rd \eta} \\
&\leq \mathcal{J}_2(f,\pi),
\end{split}
\end{equation}
 where
 \begin{align*}
  \mathcal{J}_2(f,\pi)=64\mathcal{J}(f,\pi)+1.
 \end{align*}
\end{enumerate}
\end{lemma}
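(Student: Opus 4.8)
The plan is to reduce both inequalities to Lemma \ref{lem:f_around_origin.0} by splitting the $\theta$--$\eta$ integration at the surface $\eta\|\theta\|^2=1$, on which the weight $1/k$ changes form. Throughout I would write $\bullet=\eta\|\theta\|^2$ and $\circ=\eta(\|z-\theta\|^2+1)$, and abbreviate $\rd\omega=\eta^{(2p+n)/2}f(\circ)\,\rd\theta\,\rd\eta$. Since $k(\lambda)=\lambda^{1/2}$ for $\lambda\le 1$ and $k(\lambda)=1$ for $\lambda>1$, on the region $\{\bullet>1\}$ the integrands of numerator and denominator coincide, while on $\{\bullet\le 1\}$ the numerator carries exactly the extra factor $\{\eta\|\theta\|^2\}^{-1/2}$ that appears in the definition \eqref{eq:Mfpi} of $J(f,\pi,z)$. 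This is the observation that lets Lemma \ref{lem:f_around_origin.0} do all the work.

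For Part \ref{lem:f_around_origin.part.1}, set
\[
N=\iint_{\bullet\le 1}\{\bullet\}^{-1/2}\bar{\pi}(\bullet)\,\rd\omega,\quad
D=\iint_{\bullet\le 1}\bar{\pi}(\bullet)\,\rd\omega,\quad
E=\iint_{\bullet> 1}\bar{\pi}(\bullet)\,\rd\omega.
\]
Then the left-hand side of \eqref{eq:lem:f_around_origin.part.1} equals $(N+E)/(D+E)$, and the elementary bound $(N+E)/(D+E)\le N/D+1$ (valid since $N,D,E\ge 0$ and $D>0$), combined with $N/D=J(f,\pi,z)\le \mathcal{J}(f,\pi)$ from Lemma \ref{lem:f_around_origin.0}, yields precisely $\mathcal{J}(f,\pi)+1=\mathcal{J}_1(f,\pi)$.

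For Part \ref{lem:f_around_origin.part.2} I would run the same decomposition with $\bar{\pi}_i=\bar{\pi}h_i^2$ in place of $\bar{\pi}$, obtaining $(N_i+E_i)/(D_i+E_i)\le N_i/D_i+1$, where $N_i,D_i$ are the integrals over $\{\bullet\le 1\}$ now weighted by $h_i^2(\bullet)$. The task is to control $N_i/D_i$ by $J(f,\pi,z)$ despite the factor $h_i^2$. By Part \ref{lem:h_i.1} of Lemma \ref{lem:h_i}, $h_i$ is decreasing in $\lambda$ and increasing in $i$, so on $\{\bullet\le 1\}$ one has $h_i(\bullet)\ge h_i(1)\ge h_1(1)$, and Part \ref{lem:h_i.4.5} gives $h_1(1)>1/8$; hence $1/64<h_i^2(\bullet)\le 1$ throughout $\{\bullet\le 1\}$. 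Using $h_i^2\le 1$ in the numerator and $h_i^2>1/64$ in the denominator gives $N_i\le N$ and $D_i\ge D/64$, so $N_i/D_i\le 64\,(N/D)=64\,J(f,\pi,z)\le 64\,\mathcal{J}(f,\pi)$, and the claimed bound $64\mathcal{J}(f,\pi)+1=\mathcal{J}_2(f,\pi)$ follows.

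I do not expect a serious obstacle: all the genuine analysis (boundedness of $\varphi(a;\gamma)$, the correlation inequality, the noncentral chi-square expansion) is already packaged in Lemma \ref{lem:f_around_origin.0}, and the uniform two-sided control of $h_i$ on $[0,1]$ in Lemma \ref{lem:h_i}. The only points requiring care are (i) verifying that the region-split is legitimate, i.e.\ that the separate integrals are finite and that the denominator is strictly positive so the ratios make sense---this uses $0<f<\infty$ from Assumption \ref{FF1} together with the integrability already established within the proof of Lemma \ref{lem:f_around_origin.0}---and (ii) tracking the constant $64$ correctly, since it is exactly $1/h_1^2(1)$ and the estimate $h_1^2(1)>1/64$ is what makes $\mathcal{J}_2=64\mathcal{J}+1$ the correct bound.
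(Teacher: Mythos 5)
Your proposal is correct and follows essentially the same route as the paper's proof: both split the $(\theta,\eta)$-integration at $\eta\|\theta\|^2=1$, invoke Lemma \ref{lem:f_around_origin.0} on the inner region where $1/k=\{\eta\|\theta\|^2\}^{-1/2}$, and for Part \ref{lem:f_around_origin.part.2} use $h_i^2\le 1$ in the numerator together with $h_i^2(\bullet)\ge h_1^2(1)>1/64$ on $\{\eta\|\theta\|^2\le 1\}$ (via Parts \ref{lem:h_i.1} and \ref{lem:h_i.4.5} of Lemma \ref{lem:h_i}) in the denominator. Your $N,D,E$ bookkeeping and the elementary bound $(N+E)/(D+E)\le N/D+1$ are just a rearrangement of the paper's chain of inequalities.
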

\begin{proof}
 Let $\mathcal{R}=\{(\theta,\eta):\eta\|\theta\|^2\leq 1\}$.
 The parameter space for $(\theta,\eta)$ is decomposed as
 \begin{align*}
  \mathbb{R}^p\times \mathbb{R}_+=\mathcal{R}\cup\mathcal{R}^C\text{ and }
  \mathcal{R}\cap\mathcal{R}^C=\emptyset.
 \end{align*}
 Then
\begin{equation}\label{eq:lem:f_around_origin.part.1.1}
 \begin{split}
  & \iint   \eta^{(2p+n)/2} f(\eta\{\|z-\theta\|^2+1\})
  \frac{\bar{\pi}(\eta\|\theta\|^2)}{k(\eta\|\theta\|^2)} \rd \theta   \rd \eta \\
 &= \left(\iint_{\mathcal{R}}+\iint_{\mathcal{R}^C} \right)   \eta^{(2p+n)/2} f(\eta\{\|z-\theta\|^2+1\}) \frac{\bar{\pi}(\eta\|\theta\|^2)}{k(\eta\|\theta\|^2)} \rd \theta   \rd \eta \\
 & \leq \mathcal{J}(f,\pi)\iint_{\mathcal{R}}  \eta^{(2p+n)/2} f(\eta\{\|z-\theta\|^2+1\}) \bar{\pi}(\eta\|\theta\|^2) \rd \theta   \rd \eta \\
 &\qquad +\iint_{\mathcal{R}^C}  \eta^{(2p+n)/2} f(\eta\{\|z-\theta\|^2+1\}) \bar{\pi}(\eta\|\theta\|^2) \rd \theta   \rd \eta \\
&\leq  \left\{\mathcal{J}(f,\pi)+1\right\}\iint   \eta^{(2p+n)/2} f(\eta\{\|z-\theta\|^2+1\}) \bar{\pi}(\eta\|\theta\|^2) \rd \theta   \rd \eta,
 \end{split}
\end{equation}
 which completes the proof of Part \ref{lem:f_around_origin.part.1}.

 For Part \ref{lem:f_around_origin.part.2}, note the following relationship;
 \begin{align*}
  &\iint_\mathcal{R}   \eta^{(2p+n)/2} f(\eta\{\|z-\theta\|^2+1\})
  \frac{\bar{\pi}_i(\eta\|\theta\|^2)}{k(\eta\|\theta\|^2)} \rd \theta   \rd \eta \\
  &\leq \iint_\mathcal{R}   \eta^{(2p+n)/2} f(\eta\{\|z-\theta\|^2+1\})
  \frac{\bar{\pi}_i(\eta\|\theta\|^2)}{k(\eta\|\theta\|^2)} \rd \theta   \rd \eta \\
&\leq \mathcal{J}(f,\pi)\iint_\mathcal{R}   \eta^{(2p+n)/2} f(\eta\{\|z-\theta\|^2+1\}) \bar{\pi}(\eta\|\theta\|^2) \rd \theta   \rd \eta \\
 &= \frac{\mathcal{J}(f,\pi)}{h_1^2(1)}\iint_\mathcal{R}   \eta^{(2p+n)/2} f(\eta\{\|z-\theta\|^2+1\}) \bar{\pi}(\eta\|\theta\|^2)
 h_1^2(1)\rd \theta   \rd \eta \\
 &\leq \frac{\mathcal{J}(f,\pi)}{h_1^2(1)}\iint_\mathcal{R}   \eta^{(2p+n)/2} f(\eta\{\|z-\theta\|^2+1\}) \bar{\pi}(\eta\|\theta\|^2)
 h_i^2(\eta\|\theta\|^2)\rd \theta   \rd \eta \\
 &\leq 64\mathcal{J}(f,\pi)\iint_\mathcal{R}   \eta^{(2p+n)/2} f(\eta\{\|z-\theta\|^2+1\}) \bar{\pi}(\eta\|\theta\|^2)
 h_i^2(\eta\|\theta\|^2)\rd \theta   \rd \eta.
 \end{align*}
 where the first inequality follows from the fact $h_i^2\leq 1$, the second inequality follows from
 Lemma \ref{lem:f_around_origin.0}, the third inequality follows from Part \ref{lem:h_i.1}
 of Lemma \ref{lem:h_i}.
The last inequality follows from Part \ref{lem:h_i.4.5} of Lemma \ref{lem:h_i}.
 Then, as in \eqref{eq:lem:f_around_origin.part.1.1},
 the inequality \eqref{eq:lem:f_around_origin.part.2} can be established.
\end{proof}

\begin{lemma}\label{lem:general_integration_by_parts}
Under Assumptions \ref{FF1}--\ref{FF3} on $f$ and Assumptions \ref{AA1}, \ref{AA2}  \ref{AA3} on $\pi$,  
 \begin{align*}
& z^\T\iint 
  \eta^{(2p+n)/2-1} F(\eta\{\|z-\theta\|^2+1\}) 
\nabla_\theta \bar{\pi}(\eta\|\theta\|^2) \rd \theta   \rd \eta \\
& =\iint 
 \eta^{(2p+n)/2} f(\eta\{\|z-\theta\|^2+1\}) \bar{\pi}(\eta\|\theta\|^2) \rd \theta   \rd \eta \\
&\qquad -(p+n)\iint 
\eta^{(2p+n)/2-1} F(\eta\{\|z-\theta\|^2+1\}) \bar{\pi}(\eta\|\theta\|^2) \rd \theta   \rd \eta .
 \end{align*}
\end{lemma}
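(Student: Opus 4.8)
The plan is to establish the identity through two successive integrations by parts—one in $\theta$ and one in $\eta$—and to exploit a cancellation between the pieces they produce. Throughout I would write $\circ=\eta(\|z-\theta\|^2+1)$ and $\bullet=\eta\|\theta\|^2$ as in Section~\ref{sec:caseII}, and record the elementary relations $F'(t)=-f(t)/2$, whence $\nabla_\theta F(\circ)=\eta(z-\theta)f(\circ)$, together with $\nabla_\theta\bar{\pi}(\bullet)=2\eta\theta\,\bar{\pi}'(\bullet)$ and $\eta\|z-\theta\|^2=\circ-\eta$.

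First I would integrate by parts in $\theta$, moving the gradient off $\bar{\pi}$ and onto $\eta^{(2p+n)/2-1}F(\circ)$; the boundary terms vanish by exactly the reasoning given after \eqref{eq:numerator.another.exp} (the estimates $\bar{\pi}(\lambda)=o(\lambda^{1-p/2}\log\lambda)$ and $F(t)=o(t^{-(p+n)/2-1})$ from Part~\ref{lem:pi:7} of Lemma~\ref{lem:pi} and Part~\ref{lem:f.1.1} of Lemma~\ref{lem:f}). This yields
\[
z^\T\iint\eta^{(2p+n)/2-1}F(\circ)\nabla_\theta\bar{\pi}(\bullet)\,\rd\theta\,\rd\eta
=-\iint\eta^{(2p+n)/2}\,z^\T(z-\theta)\,f(\circ)\bar{\pi}(\bullet)\,\rd\theta\,\rd\eta.
\]
Splitting $z^\T(z-\theta)=\|z-\theta\|^2+\theta^\T(z-\theta)$ and using $\eta\|z-\theta\|^2=\circ-\eta$ peels off the first term $A:=\iint\eta^{(2p+n)/2}f(\circ)\bar{\pi}(\bullet)\,\rd\theta\,\rd\eta$ of the assertion, leaving
\[
\text{LHS}=A-C_1-C_2,\qquad
C_1:=\iint\eta^{(2p+n)/2-1}\circ\,f(\circ)\bar{\pi}(\bullet)\,\rd\theta\,\rd\eta,\qquad
C_2:=\iint\eta^{(2p+n)/2}\theta^\T(z-\theta)f(\circ)\bar{\pi}(\bullet)\,\rd\theta\,\rd\eta.
\]

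The heart of the argument is to show $C_1+C_2=(p+n)B$, where $B:=\iint\eta^{(2p+n)/2-1}F(\circ)\bar{\pi}(\bullet)\,\rd\theta\,\rd\eta$. For $C_1$ I would integrate the total $\eta$-derivative $\partial_\eta\{\eta^{(2p+n)/2}F(\circ)\bar{\pi}(\bullet)\}$ over $\eta\in\mathbb{R}_+$; since $\partial_\eta F(\circ)=-\tfrac12(\circ/\eta)f(\circ)$ and $\partial_\eta\bar{\pi}(\bullet)=(\bullet/\eta)\bar{\pi}'(\bullet)$, vanishing of the boundary terms gives $0=\tfrac{2p+n}{2}B-\tfrac12 C_1+D$, i.e. $C_1=(2p+n)B+2D$ with $D:=\iint\eta^{(2p+n)/2-1}F(\circ)\,\bullet\,\bar{\pi}'(\bullet)\,\rd\theta\,\rd\eta$. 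For $C_2$ I would rewrite $(z-\theta)f(\circ)=\eta^{-1}\nabla_\theta F(\circ)$ and integrate by parts in $\theta$ against the field $\theta\,\bar{\pi}(\bullet)$; using $\nabla_\theta\!\cdot\!\{\theta\,\bar{\pi}(\bullet)\}=p\,\bar{\pi}(\bullet)+2\,\bullet\,\bar{\pi}'(\bullet)$ this gives $C_2=-pB-2D$. Adding, the two $D$ integrals cancel exactly, $C_1+C_2=(p+n)B$, and hence $\text{LHS}=A-(p+n)B$, which is the claim.

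The main obstacle is the boundary term at $\eta\to\infty$ in the evaluation of $C_1$: a naive estimate gives $\eta^{(2p+n)/2}F(\circ)\bar{\pi}(\bullet)=O(\log\eta)$, which does \emph{not} decay, so the weaker tail bound $F(t)=o(t^{-(p+n)/2-1})$ used in Step~1 is insufficient here. The necessary margin is supplied precisely by the strict inequality in Assumption~\ref{FF3}, which upgrades the bound to $F(t)=o(t^{-(p+n)/2-1-\epsilon})$ through \eqref{fF.asymptotic}, forcing the boundary contribution to be $O(\eta^{-\epsilon}\log\eta)\to0$; the behavior at $\eta\to0^+$ is handled by $\alpha>-1/2$ in Assumption~\ref{AA2}. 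Apart from these boundary checks and the routine integrability that makes each integral finite (inherited from Lemmas~\ref{lem:pi} and~\ref{lem:f}), the only substantive point is the algebraic cancellation of the $D$ terms coming from the $\eta$- and $\theta$-integrations by parts.
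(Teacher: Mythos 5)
Your proof is correct, but it takes a genuinely different route from the paper's. The paper makes the change of variables $\mu=\eta^{1/2}\theta$ followed by the shift $\mu\mapsto\mu+\eta^{1/2}z$, after which the integrand becomes $2\eta^{(p+n)/2}F(\|\mu\|^2+\eta)\,\partial_\eta\bar{\pi}(\|\mu+\eta^{1/2}z\|^2)$; a \emph{single} integration by parts in $\eta$ then produces both the $f$-term and the $(p+n)F$-term directly from differentiating $\eta^{(p+n)/2}F(\|\mu\|^2+\eta)$, with only one boundary term to check. You instead stay in the original $(\theta,\eta)$ coordinates: one $\theta$-integration by parts reduces the left side to $-\iint\eta^{(2p+n)/2}z^\T(z-\theta)f(\circ)\bar{\pi}(\bullet)$ (which is just \eqref{eq:numerator.another.exp} read backwards), and the identity then rests on the decomposition $z^\T(z-\theta)=\|z-\theta\|^2+\theta^\T(z-\theta)$ together with two further integrations by parts whose $\bullet\,\bar{\pi}'(\bullet)$ contributions cancel. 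The algebra checks out ($C_1=(2p+n)B+2D$, $C_2=-pB-2D$), and your boundary analysis is sound; in particular you are right that the $\eta\to\infty$ boundary term $\eta^{(2p+n)/2}F(\circ)\bar{\pi}(\bullet)$ in the $C_1$ step is only $O(\eta^{-\epsilon}\log\eta)$ and genuinely needs the strict inequality in Assumption \ref{FF3} via \eqref{fF.asymptotic} --- this is a more delicate boundary term than the paper's, whose term $\eta^{(p+n)/2}F(\|\mu\|^2+\eta)\bar{\pi}(\|\mu+\eta^{1/2}z\|^2)$ decays like $\eta^{-p/2}$ times a slowly varying factor. In short, the paper's argument buys brevity through a well-chosen change of variables, while yours buys transparency (no reparametrization, purely algebraic cancellation) at the cost of three integrations by parts and more boundary checks, all of which you have handled correctly.
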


 \begin{proof}
First, note the following relationship;
 \begin{align*}
& z^\T\iint 
  \eta^{(2p+n)/2-1} F(\eta\{\|z-\theta\|^2+1\}) 
\nabla_\theta \bar{\pi}(\eta\|\theta\|^2) \rd \theta   \rd \eta \\
& =z^\T\iint 
  \eta^{(2p+n)/2-1} F(\eta\{\|z-\theta\|^2+1\}) 
2\theta\eta \bar{\pi}'(\eta\|\theta\|^2) \rd \theta   \rd \eta \\
& =2z^\T\iint 
  \eta^{(p+n)/2} F(\|\eta^{1/2}z-\mu\|^2+\eta) 
\eta^{-1/2}\mu \bar{\pi}'(\|\mu\|^2) \rd \mu   \rd \eta \\
& =2\iint 
  \eta^{(p+n)/2} F(\|\mu\|^2+\eta) 
\eta^{-1/2}z^\T(\mu+\eta^{1/2}z) \bar{\pi}'(\|\mu+\eta^{1/2}z\|^2) \rd \mu   \rd \eta \\
& =2\iint 
  \eta^{(p+n)/2} F(\|\mu\|^2+\eta) 
  \frac{\partial}{\partial \eta}\bar{\pi}(\|\mu+\eta^{1/2}z\|^2) \rd \mu   \rd \eta.
 \end{align*}
By an integration by parts, the integral with respect to $\eta$ in the above is   
  \begin{align*}
&\int_0^\infty   \eta^{(p+n)/2} F(\|\mu\|^2+\eta) 
  \frac{\partial}{\partial \eta}\bar{\pi}(\|\mu+\eta^{1/2}z\|^2) \rd \eta \\
&=\left[\eta^{(p+n)/2} F(\|\mu\|^2+\eta)   \bar{\pi}(\|\mu+\eta^{1/2}z\|^2) \right]_0^\infty  \\
  &\quad +\frac{1}{2}\int_0^\infty
 \eta^{(p+n)/2} f(\|\mu\|^2+\eta) \bar{\pi}(\|\mu+\eta^{1/2}z\|^2)  \rd \eta \\
& \quad -\frac{p+n}{2}\int_0^\infty \eta^{(p+n)/2-1} F(\|\mu\|^2+\eta) \bar{\pi}(\|\mu+\eta^{1/2}z\|^2) \rd \mu   \rd \eta,
  \end{align*}
  where the first term becomes zero for any fixed $\mu$ under Assumptions.
Then
  \begin{align*}
& z^\T\iint 
  \eta^{(2p+n)/2-1} F(\eta\{\|z-\theta\|^2+1\}) 
\nabla_\theta \bar{\pi}(\eta\|\theta\|^2) \rd \theta   \rd \eta \\
&=\iint
 \eta^{(p+n)/2} f(\|\mu\|^2+\eta) \bar{\pi}(\|\mu+\eta^{1/2}z\|^2)  \rd \eta \\
& \qquad -(p+n)\iint \eta^{(p+n)/2-1} F(\|\mu\|^2+\eta) \bar{\pi}(\|\mu+\eta^{1/2}z\|^2) \rd \mu   \rd \eta \\
   &=\iint 
 \eta^{(2p+n)/2} f(\eta\{\|z-\theta\|^2+1\}) \bar{\pi}(\eta\|\theta\|^2) \rd \theta   \rd \eta \\
&\qquad -(p+n)\iint 
\eta^{(2p+n)/2-1} F(\eta\{\|z-\theta\|^2+1\}) \bar{\pi}(\eta\|\theta\|^2) \rd \theta   \rd \eta, 
  \end{align*}
  which completes the proof.
 \end{proof}

\section{Proof of Corollary \ref{thm:interesting.2.main}, Part \ref{thm:interesting.2.3}}
\label{sec.ap.alam}
Let $ \phi_\alpha(w)=w\psi_\alpha(w)$ where, as in \eqref{psialphageneral}, 
\begin{equation*}
\psi_\alpha(w)=
  \frac{\int_0^1 t^{p/2-\alpha-1}(1-t)^{\alpha}(1+wt)^{-(p+n)/2-1}\rd t}
  {\int_0^1 t^{p/2-\alpha-2}(1-t)^{\alpha}(1+wt)^{-(p+n)/2-1}\rd t},
\end{equation*}
for $\alpha\in(-1,0)$. \cite{Maruyama-Strawderman-2009} showed that
\begin{enumerate}
 \item $\phi_\alpha(w)$ is not monotonic, 
 \item $ 0\leq \phi_\alpha(w)\leq \phi_\star(\alpha)$ 
\begin{equation*}
 \phi_\star(\alpha)=\frac{p/2-\alpha-1}{n/2+\alpha+1+\alpha(p/2+n/2)},
\end{equation*}
 \item $w\phi'_\alpha(w)/\phi_\alpha(w)\geq -c(\alpha)$ 
\begin{equation*}
 c(\alpha)=-\frac{(p/2-\alpha)\alpha}{\alpha+1}.
\end{equation*}
\end{enumerate}
in Part (iii) of Corollary 3.1, Part (iv) of Corollary 3.1 and Lemma 3.4, respectively.
\cite{Kubokawa-2009} proposed a sufficient condition of $\delta_\phi=\{1-\phi(W)/W\}X$
to be minimax as follows;
\begin{equation*}
 w\phi'(w)/\phi(w)\geq -c,\text{ and }0\leq \phi\leq 2\frac{p-2-2c}{n+2+2c},
  \text{ for some }c>0,
\end{equation*}
where the upper bound is larger than the upper bound which \cite{Maruyama-Strawderman-2009}
and \cite{Wells-Zhou-2008} applied.
Note that $\phi_\star(\alpha)$ is increasing in $\alpha\in(-1/2,0)$
and that $c(\alpha)$ is decreasing in $\alpha\in(-1/2,0)$.
Then the inequalities
\begin{equation*}
\begin{split}
 \frac{p/2-\alpha-1}{n/2+\alpha+1+\alpha(p/2+n/2)}&\leq 2\frac{p-2-2c(\alpha)}{n+2+2c(\alpha)}\\
&  =2\frac{(p-2)(\alpha+1)+2(p/2-\alpha)\alpha}{(n+2)(\alpha+1)-2(p/2-\alpha)\alpha}
\end{split} 
\end{equation*}
as well as $ -1<\alpha<0$ are a sufficient condition for minimaxity of $\delta_{\psi_\alpha}$.
Let
\begin{align*}
 f(\alpha)&=
2\left\{(p-2)(\alpha+1)+2(p/2-\alpha)\alpha\right\}\left\{n/2+\alpha+1+\alpha(p/2+n/2)\right\} \\
 & \quad - \left\{(n+2)(\alpha+1)-2(p/2-\alpha)\alpha\right\}\left(p/2-\alpha-1\right) \\
 &=\frac{(p-2)(n+2)}{2}+\frac{(n+2)(5p-8)+3p(p-2)}{2}\alpha \\
 &\quad + \left\{2(p-1)^2+(2p-3)(n+2)\right\}\alpha^2-2(p+n+1)\alpha^3.
\end{align*}
For $\alpha\in(-1/2,0)$,
\begin{equation*}
 f(\alpha)\geq \frac{(p-2)(n+2)}{2}+\frac{5(n+2)(p-2)+2(n+2)+3p(p-2)}{2}\alpha 
\end{equation*}
which is nonnegative when
\begin{equation*}
-\left(5+\frac{2}{p-2}+\frac{3p}{n+2}\right)^{-1} \leq \alpha <0.
\end{equation*}
Hence Part \ref{thm:interesting.2.3} follows.

 \section{Proof of Corollary \ref{thm:interesting.3.main}}
\label{sec.ap.2005}
Let 
\begin{equation*}
 \bar{\pi}(\eta\|\theta\|^2)=\int_b^\infty \frac{1}{(2\pi)^{p/2}\xi^{p/2}}
  \exp\left(-\frac{\eta\|\theta\|^2}{2\xi}\right)g_\pi(\xi)\rd \xi
\end{equation*}
where
\begin{align*}
 g_\pi(\xi)=(\xi- b)^\alpha(\xi+1)^\beta.
\end{align*}
Eventually set $ -1<\alpha\leq n/2$, $\beta=-n/2$ and $b\geq 0$.

Note the underlying density is Gaussian and let
\begin{align*}
 f_G(t)=\frac{1}{(2\pi)^{(p+n)/2}}\exp(-t/2).
\end{align*}
Note
\begin{align*}
 \|z-\theta\|^2+\frac{\|\theta\|^2}{\xi}=\frac{\xi+1}{\xi}\left\|\theta -\frac{\xi}{\xi+1}z\right\|^2
 +\frac{\|z\|^2}{\xi+1}.
\end{align*}
Then we have
\begin{align*}
 & M_1(z;\pi)\\
 &=\frac{1}{(2\pi)^{(p+n)/2}}
 \int_b^\infty\int_0^\infty\eta^{(p+n)/2}\exp\left(-\frac{\eta}{2}\right)g_\pi(\xi) \\ &\quad\times  \left\{\int_{\mathbb{R}^p}
 \frac{\eta^{p/2}}{(2\pi)^{p/2}\xi^{p/2}}
 \exp\left(-\eta\frac{\|z-\theta\|^2}{2}-\frac{\eta\|\theta\|^2}{2\xi}\right)\rd\theta\right\}\rd \eta \rd\xi\\
 &=\frac{1}{(2\pi)^{(p+n)/2}}
 \int_b^\infty\int_0^\infty\eta^{(p+n)/2}\exp\left(-\frac{\eta}{2}\right)g_\pi(\xi) \\ &\quad\times
\frac{1}{(\xi+1)^{p/2}}\exp\left(-\frac{\eta\|z\|^2}{2(\xi+1)}\right)
\rd \eta\rd\xi \\
 &=c
 \int_b^\infty
\left(1+\frac{\|z\|^2}{\xi+1}\right)^{-(p+n)/2-1}
\frac{g_\pi(\xi) }{(\xi+1)^{p/2}}\rd\xi \\
 &=c
 \int_b^\infty\frac{(\xi-b)^\alpha(1+\xi)^{\beta+n/2+1}}{(1+\xi+\|z\|^2)^{(p+n)/2+1}}
\rd\xi \\
 &=c
 \int_0^\infty\frac{\xi^\alpha(1+b+\xi)^{\beta+n/2+1}}{(1+b+\|z\|^2+\xi)^{(p+n)/2+1}}
\rd\xi 
\end{align*}
where
\begin{align*}
 c=\frac{\Gamma((p+n)/2+1)2^{(p+n)/2+1}}{(2\pi)^{(p+n)/2}}.
\end{align*}
Also we have
\begin{align*}
 & z^\T M_2(z;\pi)\\
 &=\frac{1}{(2\pi)^{(p+n)/2}}
 \int_b^\infty\int_0^\infty\eta^{(p+n)/2}\exp\left(-\frac{\eta}{2}\right)g_\pi(\xi) \\ &\quad\times  \left\{\int_{\mathbb{R}^p}
 \frac{z^\T\theta\eta^{p/2}}{(2\pi)^{p/2}\xi^{p/2}}
 \exp\left(-\eta\frac{\|z-\theta\|^2}{2}-\frac{\eta\|\theta\|^2}{2\xi}\right)\rd\theta\right\}\rd \eta \rd\xi\\
 &=\frac{\|z\|^2}{(2\pi)^{(p+n)/2}}
 \int_b^\infty\int_0^\infty\eta^{(p+n)/2}\exp\left(-\frac{\eta}{2}\right)g_\pi(\xi) \\ &\quad\times
\frac{1}{(\xi+1)^{p/2}}\frac{\xi}{\xi+1}\exp\left(-\frac{\eta\|z\|^2}{2(\xi+1)}\right)
 \rd \eta\rd\xi \\
 &=\|z\|^2M_1(z;\pi)-\|z\|^2
c \int_0^\infty\frac{\xi^\alpha(1+b+\xi)^{\beta+n/2}}{(1+b+\|z\|^2+\xi)^{(p+n)/2+1}}
\rd\xi .
\end{align*}
Recall
\begin{align*}
  \psi_\pi(z)=\frac{z^\T z M_1(z,\pi)- z^\T M_2(z,\pi)}{\|z\|^2 M_1(z,\pi)}.
\end{align*}
Under the choice $\beta=-n/2$, we have
\begin{align*}
 \psi_\pi(z)&=
 \frac{\int_0^\infty\xi^\alpha(1+ b +\|z\|^2+\xi)^{-(p+n)/2-1}\rd\xi}
 {\int_0^\infty\xi^\alpha(1+ b +\xi)(1+ b +\|z\|^2+\xi)^{-(p+n)/2-1}\rd\xi} \\
 &=\left(1+c+\frac{\int_0^\infty\xi^{\alpha+1}(1+ b +\|z\|^2+\xi)^{-(p+n)/2-1}\rd\xi}
 {\int_0^\infty\xi^\alpha(1+ b +\|z\|^2+\xi)^{-(p+n)/2-1}\rd\xi}\right)^{-1} \\
 &=\left(1+ b +(1+ b +\|z\|^2)\frac{B(\alpha+2,(p+n)/2-\alpha-1)}{B(\alpha+1,(p+n)/2-\alpha)}
\right)^{-1} \\
 &=\left(1+ b +(1+ b +\|z\|^2)\frac{\alpha+1}{(p+n)/2-\alpha-1}
 \right)^{-1}.
\end{align*}
Let $a=\{(p+n)/2-\alpha-1\}/(\alpha+1)$.
Then the Bayes equivariant estimator is
\begin{align}\label{simple.Bayes.estimator}
 \left(1-\frac{a}{\|X\|^2/S+(a+1) (b+1)}\right)X.
\end{align}
When $\alpha+\beta<-1$ or equivalently $ \alpha<n/2-1$ as well as $\alpha>-1$,
this is a proper Bayes equivariant estimator.
When $-1\leq \alpha+\beta\leq 0$ or equivalently
$n/2-1\leq \alpha\leq n/2$ as well as $\alpha>-1/2$,
this is an admissible generalized Bayes equivariant estimator.
Hence when $a\geq (p-2)/(n+2)$ and $b\geq 0$,
the estimator \eqref{simple.Bayes.estimator} is admissible within the class of
equivariant estimators.
 
 \section{Proof of satisfaction of \ref{AA1}--\ref{AA3} by \eqref{billsprior}}
 \label{bill-1971}
\begin{lemma}
 Let $\pi(\lambda)=c_p\lambda^{p/2-1}\bar{\pi}(\lambda)$ where
 \begin{equation}\label{barpibilllem}
  \bar{\pi}(\lambda)=\int_b^\infty\frac{1}{(2\pi\xi)^{p/2}}\exp\left(-\frac{\lambda}{2\xi}\right)
  (\xi-b)^\alpha(1+\xi)^\beta\rd \xi.
 \end{equation}  
 Then Assumptions \ref{AA1}--\ref{AA3} are satisfied when
 $\{b>0,\ -1\leq \alpha+\beta\leq 0,\text{ and }\alpha>-1\} $ or
$\{b=0,\ -1\leq \alpha+\beta\leq 0,\text{ and }\alpha>-1/2\} $.
\end{lemma}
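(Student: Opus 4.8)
The plan is to read off all three Assumptions from the integral representation $\pi(\lambda)=c_p\lambda^{p/2-1}\bar\pi(\lambda)$ of \eqref{billsprior}, after the substitution $v=\lambda/(2\xi)$ turns $\bar\pi$ into a Laplace-type transform. First I would dispose of \ref{AA1}: for $\lambda>0$ the integrand in \eqref{barpibilllem} and its $\lambda$-derivatives are, on any compact $\lambda$-interval bounded away from $0$, dominated by integrable functions of $\xi$ — near $\xi=b$ because $\alpha>-1$ when $b>0$, or because $\exp(-\lambda/(2\xi))$ kills the singularity at $\xi=0$ when $b=0$, and near $\xi=\infty$ because $\alpha+\beta\le 0<p/2-1$ makes $\xi^{\alpha+\beta-p/2}$ integrable. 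Hence differentiation under the integral sign is legitimate and $\pi$ is smooth on $\mathbb{R}_+$.

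For the behaviour at the origin \ref{AA2} I would treat the two parameter regimes separately. When $b>0$ the factor $\exp(-\lambda/(2\xi))\to1$, so both $\bar\pi(0)$ and $\bar\pi'(0)$ are finite and positive; writing $\nu(\lambda)=c_p\bar\pi(\lambda)$ exhibits $\pi(\lambda)=\lambda^{p/2-1}\nu(\lambda)$ with origin exponent $p/2-1\ge 1/2>-1/2$, $\nu(0)>0$, and $\lambda\nu'(\lambda)=c_p\lambda\bar\pi'(\lambda)\to0$ trivially. When $b=0$ the change of variables gives $\bar\pi(\lambda)=(2\pi)^{-p/2}(\lambda/2)^{\alpha-p/2+1}\int_0^\infty v^{p/2-\alpha-2}(1+\lambda/(2v))^\beta e^{-v}\,\rd v$, so $\nu(\lambda):=\lambda^{-\alpha}\pi(\lambda)$ is a constant times that integral; dominated convergence sends the integral to $\Gamma(p/2-\alpha-1)\in(0,\infty)$ (finite since $\alpha<p/2-1$), giving $\nu(0)>0$ and origin exponent $\alpha>-1/2$. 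The genuinely delicate point is $\lambda\nu'(\lambda)\to0$: differentiating under the integral produces $\nu'(\lambda)\propto\int_0^\infty v^{p/2-\alpha-3}(1+\lambda/(2v))^{\beta-1}e^{-v}\,\rd v$, which can diverge as $\lambda\to0$ when $p$ is small, but splitting the $v$-range into the regions $v\le\lambda$ and $v>\lambda$ and bounding each piece shows $\nu'(\lambda)=O(1+\lambda^{p/2-\alpha-2})$, whence $\lambda\nu'(\lambda)=O(\lambda+\lambda^{p/2-\alpha-1})\to0$ because $p/2-\alpha-1>0$.

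For the tail behaviour \ref{AA3} the cleanest route is the exact identity $\kappa(\lambda)=\lambda\pi'(\lambda)/\pi(\lambda)=(p/2-1)-\tfrac{\lambda}{2}\,E_\lambda[1/\xi]$, where $E_\lambda$ denotes expectation under the tilted density proportional to $\xi^{-p/2}(\xi-b)^\alpha(1+\xi)^\beta\exp(-\lambda/(2\xi))$. Writing $v=\lambda/(2\xi)$, the law of $v$ under $E_\lambda$ has density proportional to $v^{p/2-\alpha-2}(1+\lambda/(2v))^\beta e^{-v}$ (with the obvious modification of the upper limit when $b>0$), and as $\lambda\to\infty$ this converges to the Gamma law with shape $p/2-\alpha-\beta-1$; hence $E_\lambda[v]\to p/2-\alpha-\beta-1$ and $\kappa(\lambda)\to\alpha+\beta$. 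If $-1\le\alpha+\beta<0$ this is \ref{AA3.1} directly. In the critical case $\alpha+\beta=0$ one has $\kappa\to0$, placing us in branch \ref{AA3.2}, and expanding $(1+\lambda/(2v))^\beta=(\lambda/(2v))^\beta(1+2\beta v/\lambda+\cdots)$ gives $E_\lambda[v]=p/2-\alpha-\beta-1+O(1/\lambda)$, so $\kappa(\lambda)=O(1/\lambda)$ and $\limsup_{\lambda\to\infty}\{\log\lambda\}|\kappa(\lambda)|=0<1$, which is \ref{AA3.2.2}.

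The main obstacle throughout is the same first-order correction. Leading-order Laplace/Watson asymptotics are routine, but the derivative condition in \ref{AA2} and the rate of $\kappa\to0$ in the critical case $\alpha+\beta=0$ both hinge on controlling the subleading term coming from the factor $(1+\lambda/(2v))^\beta$, and a naive term-by-term expansion fails precisely when the formal next integral diverges (small $p$, or $\alpha$ near $-1/2$). I would handle this uniformly by the split of the $v$-integral into $v\le\lambda$ and $v>\lambda$ sketched above; the inequalities $\alpha+\beta<p/2-1$ and $p/2-\alpha-1>0$, both consequences of $p\ge3$ together with the stated ranges of $(\alpha,\beta)$, are exactly what force every remainder to vanish.
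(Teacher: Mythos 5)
Your proposal is correct and follows essentially the same route as the paper: establish \ref{AA1} by differentiation under the integral, verify \ref{AA2} by computing the $\lambda\to 0$ asymptotics of $\bar{\pi}$ and $\bar{\pi}'$ (with the same two choices of $\nu$ in the cases $b>0$ and $b=0$), and verify \ref{AA3} by showing $\kappa(\lambda)\to\alpha+\beta$, landing in \ref{AA3.1} when $-1\le\alpha+\beta<0$ and in \ref{AA3.2.2} via the rate $\kappa(\lambda)=O(1/\lambda)$ when $\alpha+\beta=0$. The only difference is cosmetic: the paper invokes the Tauberian theorem for the leading asymptotics of $\bar{\pi}$ and $\bar{\pi}'$ separately, whereas you obtain the same limits by the substitution $v=\lambda/(2\xi)$, dominated convergence, and an explicit split of the $v$-integral (and both arguments share the same implicit restriction $\alpha<p/2-1$ in the small-$\lambda$ analysis).
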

Note the integrability of \eqref{barpibilllem} under $(b,b+\epsilon)$ follows
$b=0$ or $b>0$ as well as $\alpha>-1$.
Also the integrability of \eqref{barpibilllem} under $(b+\epsilon,\infty)$ follows
when
$\alpha+\beta-p/2<-1$.
Further note Note also if $\alpha+\beta< -1$ as well as $\alpha>-1$, 
$\int_b^\infty(\xi-b)^\alpha(1+\xi)^\beta\rd \xi<\infty$ and hence the prior on $\lambda$ is proper.

\begin{proof}
 Clearly $\pi(\lambda)$ is differentiable as
\begin{equation}\label{eq:barpiderivative}
 \bar{\pi}'(\lambda)=-\frac{1}{2}
\int_b^\infty\frac{(2\pi)^{-p/2}}{\xi^{p/2+1}}\exp\left(-\frac{\lambda}{2\xi}\right)
  (\xi-b)^\alpha(1+\xi)^\beta\rd \xi.
\end{equation}  
 and hence Assumption \ref{AA1} is satisfied.

 [Assumption \ref{AA2} with $\alpha>-1/2$ and $b=0$]
When $b=0$, by Tauberian theorem, we have, in \eqref{barpibilllem} and \eqref{eq:barpiderivative},
\begin{equation}\label{eq:tau.0}
 \begin{split}
\lim_{\lambda\to 0}\frac{(2\pi)^{p/2}\bar{\pi}(\lambda)}{(\lambda/2)^{-(p/2-\alpha-1)}\Gamma(p/2-\alpha-1)}=1 \\
\lim_{\lambda\to 0}\frac{-2(2\pi)^{p/2}\bar{\pi}'(\lambda)}{(\lambda/2)^{-(p/2-\alpha)}\Gamma(p/2-\alpha)}=1,
\end{split}
\end{equation}
which implies that
\begin{equation}\label{eq:tau.1}
 \lim_{\lambda\to 0}\lambda\frac{\bar{\pi}'(\lambda)}{\bar{\pi}(\lambda)}=-\frac{p}{2}+\alpha+1.
\end{equation}
Recall $ \pi(\lambda)=c_p\lambda^{p/2-1}\bar{\pi}(\lambda)$ and let $\nu(\lambda)=\lambda^{-\alpha}\pi(\lambda)=c_p\lambda^{p/2-1-\alpha}\bar{\pi}(\lambda)$. Then we have
\begin{align*}
 \nu(0)=c_p2^{p/2-\alpha-1}\Gamma(p/2-\alpha-1)(2\pi)^{-p/2}
\end{align*}
by \eqref{eq:tau.0} and 
\begin{align*}
 \lim_{\lambda\to 0}\lambda\frac{\nu'(\lambda)}{\nu(\lambda)}=
 \lim_{\lambda\to 0}\lambda\nu'(\lambda)=0
\end{align*}
by \eqref{eq:tau.1}.

 [Assumption \ref{AA2} with $\alpha>-1$ and $b>0$]
 When $\alpha>-1$ and $b>0$, it follows that $0<\bar{\pi}(0)<\infty$ and $0<|\bar{\pi}'(0)|<\infty$.
 For this case take $ \nu(\lambda)=c_p\bar{\pi}(\lambda)$.
 Then $\pi(\lambda)=\lambda^{p/2-1}\nu(\lambda)$, where $p/2-1>-1/2$ and
$\nu(\lambda)$ satisfies 
\begin{equation*}
 0<\nu(0)<\infty \text{ and }
   \lim_{\lambda\to 0}\lambda\frac{\nu'(\lambda)}{\nu(\lambda)}=0.
\end{equation*}

[Assumption \ref{AA3}]
By Tauberian theorem, we have, in \eqref{barpibilllem} and \eqref{eq:barpiderivative},
\begin{equation}\label{eq:tau.2}
 \begin{split}
\lim_{\lambda\to \infty}\frac{(2\pi)^{p/2}\bar{\pi}(\lambda)}{(\lambda/2)^{-(p/2-\alpha-\beta-1)}\Gamma(p/2-\alpha-\beta-1)}=1 \\
\lim_{\lambda\to \infty}\frac{-2(2\pi)^{p/2}\bar{\pi}'(\lambda)}{(\lambda/2)^{-(p/2-\alpha-\beta)}\Gamma(p/2-\alpha-\beta)}=1,
\end{split}
\end{equation}
which implies that
\begin{equation}\label{eq:tau.3}
 \lim_{\lambda\to \infty}\lambda\frac{\bar{\pi}'(\lambda)}{\bar{\pi}(\lambda)}=-\frac{p}{2}+\alpha+\beta+1\text{ and }
\lim_{\lambda\to \infty}\lambda\frac{\pi'(\lambda)}{\pi(\lambda)}=\alpha+\beta.
\end{equation}
Hence when $-1\leq \alpha+\beta<0$, Assumption \ref{AA3.1} is satisfied.

When $\alpha+\beta=0$, note 
\begin{equation*}
 (\xi-b)^\alpha(1+\xi)^\beta=
  \left(1-\frac{1+b}{1+\xi}\right)^\alpha
\end{equation*}
and
\begin{equation*}
 \lim_{\xi\to\infty}\xi\left\{\left(1-\frac{1+b}{1+\xi}\right)^\alpha-1\right\}=-\alpha(1+b).
\end{equation*}
Then
\begin{equation*}
 \lim_{\xi\to\infty}
  \frac{(2\pi)^{p/2}\bar{\pi}(\lambda)-(\lambda/2)^{-(p/2-1)}\Gamma(p/2-1)}
  {(\lambda/2)^{-p/2}\Gamma(p/2)}=-\alpha(1+b)
\end{equation*}
and
\begin{equation*}
 \lim_{\xi\to\infty}
  \frac{-2(2\pi)^{p/2}\bar{\pi}'(\lambda)-(\lambda/2)^{-p/2}\Gamma(p/2)}
  {(\lambda/2)^{-p/2-1}\Gamma(p/2+1)}=-\alpha(1+b).
\end{equation*}
Hence we get
\begin{equation*}
 \lim_{\xi\to\infty}\lambda\left(\lambda\frac{\bar{\pi}'(\lambda)}{\bar{\pi}(\lambda)}+\frac{p}{2}-1\right)
=\lim_{\xi\to\infty}\lambda^2\frac{\pi'(\lambda)}{\pi(\lambda)}  =2(p-2)\alpha(b+1),
\end{equation*}
which satisfies Assumption \ref{AA3.2.2}.
Hence Assumption \ref{AA3} is satisfied by $-1\leq \alpha+\beta\leq 0$.
\end{proof}

 \section{Proof of Admissibility of $X$ for $p=1,2$}
 \label{p12}
 In the Gaussian case, $X\sim N_p(\theta,\eta^{-1}I)$ and $\eta \|U\|^2\sim \chi^2_n$,
 Kubokawa in his unpublished lecture note written in Japanese,
 showed that when $p=1,2$, the estimator $X$ is admissible among all estimators.
Here we generalize it for our general situation with the underlying density $f$ given by \eqref{eq:density.f}.
For a general prior $g(\theta,\eta)$, 
we have
\begin{equation*}
\begin{split}
 \delta_g (x,u) &=\frac
 {\int_{\mathbb{R}^p}\int_0^\infty \theta \eta \eta^{(p+n)/2}f(\eta\{\|x-\theta\|^2+\|u\|^2\})g(\theta,\eta)\rd \theta \rd \eta}
{\int_{\mathbb{R}^p}\int_0^\infty \eta \eta^{(p+n)/2}f(\eta\{\|x-\theta\|^2+\|u\|^2\})g(\theta,\eta)\rd \theta \rd \eta} \\
&=x + \frac{\int_{\mathbb{R}^p}\int_0^\infty (\theta-x) \eta \eta^{(p+n)/2}f(\eta\{\|x-\theta\|^2+\|u\|^2\})g(\theta,\eta)\rd \theta \rd \eta}{\int_{\mathbb{R}^p}\int_0^\infty \eta^{(p+n)/2+1} f(\eta\{\|x-\theta\|^2+\|u\|^2\})g(\theta,\eta)\rd \theta \rd \eta} \\
 &=
x - \frac{\int_{\mathbb{R}^p}\int_0^\infty \eta^{(p+n)/2}\nabla_\theta F(\eta\{\|x-\theta\|^2+\|u\|^2\})g(\theta,\eta)\rd \theta \rd \eta}{\int_{\mathbb{R}^p}\int_0^\infty \eta^{(p+n)/2+1} f(\eta\{\|x-\theta\|^2+\|u\|^2\})g(\theta,\eta)\rd \theta \rd \eta} \\
 &=
x + \frac{\int_{\mathbb{R}^p}\int_0^\infty  \eta^{(p+n)/2}F(\eta\{\|x-\theta\|^2+\|u\|^2\})\nabla_\theta g(\theta,\eta)\rd \theta \rd \eta}{\int_{\mathbb{R}^p}\int_0^\infty \eta^{(p+n)/2+1} f(\eta\{\|x-\theta\|^2+\|u\|^2\})g(\theta,\eta)\rd \theta \rd \eta}, 
\end{split}
\end{equation*}
where $F(t)=(1/2)\int_t^\infty f(s)\rd s$ and the last equality follows from an integration by parts. 
Hence 
the estimator $X$ is the generalized Bayes estimator
with respect to any improper prior which does not depend on $\theta$, say $g(\theta,\eta)=\pi(\eta)$.
Further let
\begin{align*}
 g_i(\theta,\eta)=h^2_i(\eta\|\theta\|^2)\pi(\eta)
\end{align*}
where $h_i$ is given by \eqref{eq:new_hir}.
Clearly $g_i(\theta,\eta)$ approaches $\pi(\eta)$ as $i\to\infty$.
Also $ g_i(\theta,\eta)$ for any fixed $i$ is integrable under the condition
\begin{equation}\label{eq:pieta}
 \int_0^\infty \eta^{-p/2}\pi(\eta)\rd \eta<\infty
\end{equation}
since
\begin{align*}
\int_{\mathbb{R}^p}\int_0^\infty g_i(\theta,\eta)\rd\theta\rd\eta 
&=\int_{\mathbb{R}^p}\int_0^\infty \eta^{p/2}h^2_i(\eta\|\theta\|^2)\eta^{-p/2}\pi(\eta)\rd\theta\rd\eta \\
&=c_p\int_0^\infty \lambda^{p/2-1} h^2_i(\lambda)\rd\lambda\int_0^\infty\eta^{-p/2}\pi(\eta)\rd\eta,
\end{align*}
where, by Lemma \ref{lem:h_i} of  Appendix \ref{sec:assumption},
$\int_0^\infty \lambda^{p/2-1} h^2_i(\lambda)\rd\lambda<\infty $ for $p=1,2$. 
\begin{thm}
Assume Assumptions \ref{FF1}, \ref{FF2} and \ref{FF3.1} on $f$.
Then the estimator $X$ is admissible for $p=1,2$. 
\end{thm}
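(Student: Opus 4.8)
The plan is to establish admissibility of $X$ by the Blyth method, using the proper priors $g_i(\theta,\eta)=h_i^2(\eta\|\theta\|^2)\pi(\eta)$ introduced above, with $h_i$ as in \eqref{eq:new_hir} and $\pi(\eta)>0$ chosen so that \eqref{eq:pieta} holds. Since $X$ is the generalized Bayes estimator under the $\theta$-free prior $\pi(\eta)$ and each $g_i$ is proper when $p=1,2$, and since the risk of $X$ is the constant $R(\theta,\eta,X)=E[\eta\|X-\theta\|^2]=p$ by \eqref{assmp.f.1}, the integrated Bayes risk of $X$ under $g_i$ is finite. Writing $\delta_{g_i}$ for the Bayes estimator under $g_i$ and completing the square (the loss $\eta\|\delta-\theta\|^2$ being strictly convex), the Bayes risk difference is nonnegative and
\begin{equation*}
\Delta_i:=\int\{R(\theta,\eta,X)-R(\theta,\eta,\delta_{g_i})\}g_i\rd\theta\rd\eta=\int_{\mathbb{R}^{p+n}}\frac{\|N_i(x,u)\|^2}{D_i(x,u)}\rd x\rd u,
\end{equation*}
where, with $\bullet=\eta\|\theta\|^2$ and $\circ=\eta\{\|x-\theta\|^2+\|u\|^2\}$, the integration-by-parts identity recorded just before the statement gives $N_i=\int\eta^{(p+n)/2}F(\circ)\nabla_\theta g_i\rd\theta\rd\eta$ and $D_i=\int\eta^{(p+n)/2+1}f(\circ)g_i\rd\theta\rd\eta$. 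The inadmissibility hypothesis is handled exactly as in Appendix \ref{sec:ap.blyth-method}: if $X$ were dominated by some $\delta'$, then $\Delta_i\ge\int\{R(X)-R(\delta')\}g_i\ge\epsilon\int_A g_i\ge\epsilon\int_A g_1>0$ for a compact set $A$ of positive Lebesgue measure, so it suffices to prove $\Delta_i\to0$.

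Next I would bound $\Delta_i$ from above. Since $\nabla_\theta g_i=4\eta\theta\,h_i(\bullet)h_i'(\bullet)\pi(\eta)$, the Cauchy--Schwarz inequality with weight $\eta^{(p+n)/2+1}f(\circ)h_i^2(\bullet)\pi(\eta)$ (whose total mass is $D_i$) yields, writing $\mathcal{F}=F/f$,
\begin{equation*}
\frac{\|N_i\|^2}{D_i}\le 16\iint\|\theta\|^2\eta^{(p+n)/2+1}\mathcal{F}^2(\circ)f(\circ)\{h_i'(\bullet)\}^2\pi(\eta)\rd\theta\rd\eta.
\end{equation*}
Integrating over $(x,u)\in\mathbb{R}^{p+n}$, the shift $x\mapsto x+\theta$ followed by passage to polar coordinates factorises the inner integral as $\int_{\mathbb{R}^{p+n}}\mathcal{F}^2(\eta\|q\|^2)f(\eta\|q\|^2)\rd q=c_{p+n}\eta^{-(p+n)/2}A_2$ with $A_2=\int_0^\infty t^{(p+n)/2-1}\mathcal{F}^2(t)f(t)\rd t$, finite under Assumption \ref{FF3.1} by Part \ref{lem:f.1.2} of Lemma \ref{lem:f} (with $j=0$). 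The substitution $\mu=\eta^{1/2}\theta$ then separates the remaining double integral, giving
\begin{equation*}
\Delta_i\le 16\,c_{p+n}c_p A_2\left(\int_0^\infty\eta^{-p/2}\pi(\eta)\rd\eta\right)\int_0^\infty\lambda^{p/2}\{h_i'(\lambda)\}^2\rd\lambda,
\end{equation*}
in which the first factor is finite by \eqref{eq:pieta}.

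It then remains to show $\int_0^\infty\lambda^{p/2}\{h_i'(\lambda)\}^2\rd\lambda\to0$. By Part \ref{lem:h_i.4} of Lemma \ref{lem:h_i}, $\sup_i|h_i'(\lambda)|\le 2/\{(\lambda+e)\log(\lambda+e)\log\log(\lambda+e+1)\}$, so $\lambda^{p/2}\sup_i\{h_i'(\lambda)\}^2$ is dominated near infinity by a multiple of $\lambda^{p/2-2}/\{\log(\lambda+e)\}^2$ and is bounded near the origin; this dominating function is integrable exactly when $p=1,2$ (for $p=2$ the logarithmic factors are indispensable, and integrability fails for $p\ge3$, which is precisely where the dimension restriction---and Stein's phenomenon---enters). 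Since $h_i'(\lambda)\to0$ pointwise as $i\to\infty$, the dominated convergence theorem gives $\int_0^\infty\lambda^{p/2}\{h_i'(\lambda)\}^2\rd\lambda\to0$, hence $\Delta_i\to0$, contradicting $\liminf_i\Delta_i>0$ and proving admissibility. The main obstacle is this final integrability, together with the justification of the integration by parts (vanishing boundary terms, which follow from the decay of $F$ and of $g_i$) and of the Cauchy--Schwarz reduction; everything hinges on the borderline integrability that forces $p\le2$.
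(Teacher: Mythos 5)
Your proposal is correct and follows essentially the same route as the paper's own proof in Appendix \ref{p12}: the Blyth method with the proper sequence $g_i(\theta,\eta)=h_i^2(\eta\|\theta\|^2)\pi(\eta)$, the integration-by-parts identity showing $X$ is generalized Bayes under $\pi(\eta)$, the Cauchy--Schwarz bound producing the factor $A_2=\int_0^\infty t^{(p+n)/2-1}F^2(t)/f(t)\,\rd t$, and the reduction to $\int_0^\infty\lambda^{p/2}\{h_i'(\lambda)\}^2\rd\lambda$, whose finiteness for $p=1,2$ (via Part \ref{lem:h_i.4} of Lemma \ref{lem:h_i}) is exactly where the dimension restriction enters. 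The only additions are some worthwhile explicit detail on the dominated-convergence step and on why inadmissibility would force $\liminf_i\Delta_i>0$, which the paper leaves to the standard Blyth argument.
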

\begin{proof}
Let $ \delta_{gi}$ be the proper Bayes estimator with respect to $g_i(\theta,\eta)$.
Then the Bayes risk difference
of $ x$ and $ \delta_{gi}$ with respect to $ g_i(\theta,\eta)$  is 
\begin{equation*}
\begin{split}
 \Delta_i &= \int_{\mathbb{R}^{p}}\int_0^\infty \left\{
R(\theta,\eta,X)-R(\theta,\eta,\delta_{gi}) \right\}
g_i(\theta,\eta)
 \rd \theta \rd\eta\\
&= \int_{\mathbb{R}^{p}}\int_{\mathbb{R}^{n}}\int_{\mathbb{R}^{p}}\int_0^\infty 
\eta\left\{\|x-\theta\|^2-\|\delta_{gi}-\theta\|^2\right\}\\
&\quad\times\eta^{(p+n)/2} f(\eta\{\|x-\theta\|^2+\|u\|^2\})g_i(\theta,\eta)
\rd x\rd u \rd \theta \rd\eta\\
 &= \int_{\mathbb{R}^{p}}\int_{\mathbb{R}^{n}}\int_{\mathbb{R}^{p}}\int_0^\infty \| \delta_{gi}-x \|^2 \\
&\quad\times\eta^{(p+n)/2+1} f(\eta\{\|x-\theta\|^2+\|u\|^2\})g_i(\theta,\eta)
\rd x\rd u \rd \theta \rd\eta\\
 &= \int_{\mathbb{R}^{p}}\int_{\mathbb{R}^{n}}
 \frac{\left\|\int_{\mathbb{R}^p}\int_0^\infty  \eta^{(p+n)/2}F(\eta\{\|x-\theta\|^2+\|u\|^2\})\nabla_\theta g_i(\theta,\eta)\rd\theta \rd\eta\right\|^2}{\int_{\mathbb{R}^p}\int_0^\infty \eta^{(p+n)/2+1} f(\eta\{\|x-\theta\|^2+\|u\|^2\})g_i(\theta,\eta)\rd\theta \rd\eta}
 \rd x\rd u.
\end{split}
\end{equation*}
 Note
 \begin{align*}
  \nabla_\theta g_i(\theta,\eta)=4\eta\theta h_i(\eta\|\theta\|^2)h'_i(\eta\|\theta\|^2).
 \end{align*}
Then, by Cauchy-Schwarz inequality, we have
\begin{equation}
\begin{split}
 \Delta_i &\leq \int_{\mathbb{R}^{p}}\int_{\mathbb{R}^{n}}\int_{\mathbb{R}^{p}}\int_0^\infty
 \eta^{(p+n)/2-1}\frac{F(\eta\{\|x-\theta\|^2+\|u\|^2\})^2}{f(\eta\{\|x-\theta\|^2+\|u\|^2\})}\\
&\quad\times \frac{\|\nabla_\theta g_i(\theta,\eta)\|^2}{g_i(\theta,\eta)}
  \rd x\rd u \rd\theta \rd\eta \\
 &=c_{p+n}\int_0^\infty t^{(p+n)/2-1}\frac{F^2(t)}{f(t)}\rd t
 \int_{\mathbb{R}^{p}}\int_0^\infty
\frac{\|\nabla_\theta g_i(\theta,\eta)\|^2}{\eta g_i(\theta,\eta)}
 \rd\theta \rd\eta \\
 &=16 c_{p+n}A_2 \int_{\mathbb{R}^{p}}\int_0^\infty \eta^{p/2}\eta\|\theta\|^2 \left\{h'_i(\eta\|\theta\|^2)\right\}^2
\eta^{-p/2}\pi(\eta)\rd\theta \rd\eta \\
 &=16 c_p c_{p+n}A_2 \int_0^\infty \frac{\pi(\eta)}{\eta^{p/2}}\rd \eta
 \int_0^\infty\lambda^{p/2} \sup_i\left\{h'_i(\lambda)\right\}^2\rd \lambda
\end{split}
\end{equation}
 where $A_2=\int_0^\infty t^{(p+n)/2-1}\{F^2(t)/f(t)\}\rd t$ and it is bounded under
 Assumptions \ref{FF1}, \ref{FF2} and \ref{FF3.1} on $f$,
 as in Part \ref{lem:f.1.2} of Lemma \ref{lem:f}.
 Further, 
by Lemma \ref{lem:h_i} of  Appendix \ref{sec:assumption},
$\int_0^\infty\lambda^{p/2} \sup_i\left\{h'_i(\lambda)\right\}^2\rd \lambda<\infty $ for $p=1,2$. 
 Hence, by the dominated convergence theorem,
 we have $\Delta_i\to 0$ as $i\to\infty$. By the Blyth sufficient condition,
 the admissibility of $X$ for $p=1,2$ follows.
 \end{proof}

\end{document}